\let\mathg\mathfrak
\theoremstyle{plain}
\newtheorem{cor}{Corollary}[section]
\newtheorem{lem}[cor]{Lemma}
\newtheorem{thm}[cor]{Theorem}
\newtheorem{prop}[cor]{Proposition}
\newtheoremstyle{thmstylenn}
{15pt}
{5pt}
{\it}
{}
{\bf}
{ \ref{lem:sp2rep}.}
{ }
{}
\theoremstyle{thmstylenn}
\theoremstyle{definition}
\newtheorem{exa}[cor]{Example}
\newtheorem{NB}[cor]{Remark}
\newtheorem{NBs}[cor]{Remarks}
\newtheorem{dfn}[cor]{Definition}
\newtheorem{nota}[cor]{Notation}
\numberwithin{table}{section}
\numberwithin{equation}{section}
\newcommand{\Kommentar}[1]{}
\newcommand{\Kommentars}[1]{} 
\newcommand{\bdm}{\begin{displaymath}}
\newcommand{\edm}{\end{displaymath}}
\newcommand{\be}{\begin{equation}}
\newcommand{\ee}{\end{equation}}
\newcommand{\bea}[1][]{\begin{eqnarray#1}}
\newcommand{\eea}[1][]{\end{eqnarray#1}}
\newcommand{\btab}{\begin{tabular}}
\newcommand{\etab}{\end{tabular}}
\newcommand{\ra}{\rightarrow}
\newcommand{\lra}{\longrightarrow}
\newcommand{\lan}{\left\langle}
\newcommand{\ran}{\right\rangle}
\newcommand{\Id}{\ensuremath{\mathrm{Id}}}
\newcommand{\tr}{\ensuremath{\mathrm{tr}}}
\newcommand{\del}{\partial}
\newcommand{\cyclic}[1]{\stackrel{{\scriptsize #1}}{\mathfrak{S}}}
\newcommand{\C}{\ensuremath{\mathbb{C}}}
\newcommand{\R}{\ensuremath{\mathbb{R}}}
\newcommand{\Z}{\ensuremath{\mathbb{Z}}}
\renewcommand{\P}{\ensuremath{\mathbb{P}}}
\newcommand{\CP}{\ensuremath{\mathbb{CP}}}
\newcommand{\RP}{\ensuremath{\mathbb{RP}}}
\newcommand{\eps}{\ensuremath{\varepsilon}}
\newcommand{\End}{\ensuremath{\mathrm{End}}}
\newcommand{\Scal}{\ensuremath{\mathrm{Scal}}}
\renewcommand{\Im}{\ensuremath{\mathrm{Im\,}}}
\renewcommand{\Re}{\ensuremath{\mathrm{Re\,}}}
\newcommand{\diag}{\ensuremath{\mathrm{diag}}}
\newcommand{\GL}{\ensuremath{\mathrm{GL}}}
\newcommand{\SL}{\ensuremath{\mathrm{SL}}}
\newcommand{\su}{\ensuremath{\mathg{su}}}
\newcommand{\SU}{\ensuremath{\mathrm{SU}}}
\newcommand{\G}{\ensuremath{\mathrm{G_2}}}
\newcommand{\U}{\ensuremath{\mathrm{U}}}
\newcommand{\so}{\ensuremath{\mathg{so}}}
\newcommand{\SO}{\ensuremath{\mathrm{SO}}}
\newcommand{\Spin}{\ensuremath{\mathrm{Spin}}}
\newcommand{\g}{\ensuremath{\mathfrak{g}}}
\newcommand{\h}{\ensuremath{\mathfrak{h}}}
\newcommand{\m}{\ensuremath{\mathfrak{m}}}
\newcommand{\WG}{\ensuremath{\mathcal{W}}}
\newcommand{\WS}{\chi}
\def\sideremark#1{\ifvmode\leavevmode\fi\vadjust{\vbox to0pt{\vss
\hbox to 0pt{\hskip\hsize\hskip1em%
\vbox{\hsize2cm\tiny\raggedright\pretolerance10000%
\noindent {\color{red}{\bf #1}}\hfill}\hss}\vbox to8pt{\vfil}\vss}}}%
\def\w{\wedge}
\newcommand{\lto}{\longrightarrow}
\newcommand{\ltto}{\longmapsto}
\newcommand{\tto}{\mapsto}
\def\iso{\cong}
\newcommand{\hook}{\ensuremath{\lrcorner\,}}
\newcommand{\ol}{\overline}
\newcommand{\tx}{\textstyle}
\def\tsum{\tx\sum}
\newcommand{\q}{\quad}\newcommand{\qq}{\qquad}
\newcommand{\ba}{\begin{array}}\newcommand{\ea}{\end{array}}
\renewcommand{\leq}{\leqslant}
\newcommand{\Ker}{\ensuremath{\mathrm{Ker\,}}}
\newcommand{\cliffmult}{m}
\newcommand{\phis}{\phi} 
\newcommand{\Sigmas}{\Sigma} 
\newcommand{\jphis}{j(\phis)} 
\newcommand{\psis}{\psi_{\phi}} 
\newcommand{\psims}{\psis^J} 
\newcommand{\J}{J_{\phis}} 
\newcommand{\js}{j} 
\newcommand{\phib}{\phi^*} 
\newcommand{\f}{\Psi_{\phi}} 
\newcommand{\phig}{\phi} 
\newcommand{\phish}{\phi} 
\newcommand{\phigh}{\phi} 
\newcommand{\phigc}{\bar\phis}
\newcommand{\phisc}{\phis}
\newcommand{\jphisc}{\jphis}
\newcommand{\phigk}{\phis}
\newcommand{\phisk}{\phis}
\newcommand{\esomorphism}{\psi}
\newtheorem*{acknowledgements}{Acknowledgements}
\begin{document}
\def\haken{\mathbin{\hbox to 6pt{%
                 \vrule height0.4pt width5pt depth0pt
                 \kern-.4pt
                 \vrule height6pt width0.4pt depth0pt\hss}}}
    \let \lrcorner\haken
\setcounter{equation}{0}
%
%
\thispagestyle{empty}
%
\date{\today}
\title[Spinorial description of $\SU(3)$- and $\G$-manifolds]{Spinorial description of $\SU(3)$- and $\G$-manifolds}
\subjclass[2010]{Primary 53C10; Secondary 53C25, 53C27, 53C29, 53C80}
\keywords{spinor, $\SU(3)$-structure, $\G$-structure, intrinsic torsion, characteristic connection, Killing spinor with torsion}

%
%
%
\author[I.Agricola]{Ilka Agricola}
\author[S.Chiossi]{Simon G. Chiossi}
\author[T.Friedrich]{Thomas Friedrich}
\author[J.H\"oll]{Jos H\"oll}
%
%
\address{\hspace{-7mm} 
Ilka Agricola, Jos H\"oll,
Fachbereich Mathematik und Informatik,
Philipps-Universit\"at Marburg,
Hans-Meerwein-Strasse,
35032 Marburg, Germany\newline
{\normalfont\ttfamily agricola@mathematik.uni-marburg.de}, 
{\normalfont\ttfamily hoellj@mathematik.uni-marburg.de}}
\address{\hspace{-7mm} 
Simon G. Chiossi,
Departamento de Matematica,
Universidade Federal da Bahia,
Av. Adhemar de Barros s/n, Ondina,
40170-110 Salvador/BA, Brazil\newline
{\normalfont\ttfamily simon.chiossi@polito.it}}
\address{\hspace{-7mm} 
Thomas Friedrich,
Institut f\"ur Mathematik,
Humboldt-Universit\"at zu Berlin,
10099 Berlin, Germany\newline
{\normalfont\ttfamily friedric@mathematik.hu-berlin.de}}
%
%
\begin{abstract}
We present a uniform  description of 
$\SU(3)$-structures in dimension
$6$ as well as $\G$-structures in dimension $7$ in terms of
a characterising spinor and the spinorial field equations it satisfies.
We apply the results to hypersurface theory to obtain new
embedding theorems, and give a general recipe for building 
conical manifolds. The approach also enables one to 
subsume all variations of the notion of a Killing spinor. 
%
\end{abstract}
\maketitle
\pagestyle{headings}
\frenchspacing
\section{Introduction}
%
This paper is devoted to a systematic and uniform description of 
$\SU(3)$-structures in dimension
$6$, as well as $\G$-structures in dimension $7$, using a spinorial formalism.
Any $\SU(3)$- or $\G$-manifold can be understood as a 
Riemannian spin manifold of dimension $6$ or $7$, respectively, equipped 
with a real spinor field $\phisc$ or $\phigc$ of length one. 
Let us denote by $\nabla$ the Levi-Civita connection and its lift
to the spinor bundle. We prove that an $\SU(3)$-manifold admits a $1$-form 
$\eta$ and an endomorphism field $S$ such that
the spinor $\phisc$ solves, for any vector field $X$,
\bdm
\nabla_X \phisc=\eta(X) \jphisc+S(X)\cdot \phisc,
\edm
where $j$ is the $\Spin(6)$-invariant complex structure on the spin
representation space $\Delta=\R^8$ realising the isomorphism 
$\Spin(6)\cong\SU(4)$. In a similar vein, there exists an endomorphism $\bar S$
such that the spinor $\phigc$ of a $\G$-manifold satisfies 
the even simpler equation
\bdm
\nabla_X \phigc = \bar S (X)\cdot \phigc.
\edm
We identify the characterising entities $\eta,\, S$, and $\bar S$
with certain components of the intrinsic torsion and use them to describe 
the basic classes of $\SU(3)$- and $\G$-manifolds by means of a spinorial 
field equation.
For example, it is known that nearly K\"ahler manifolds
correspond to $S=\mu\, \Id$ and $\eta=0$ \cite{Grunewald90}, 
and nearly parallel 
$\G$-manifolds are those with $\bar S=\lambda\,\Id$ \cite{FKMS},
since the defining equation reduces then to the classical constraint for a 
Riemannian Killing spinor. 
If $S$ or $\bar S$ is symmetric (and, in dimension $6$, 
additionally $\eta=0$), this is the equation defining 
generalised Killing spinors, which are known to correspond
to half-flat structures \cite{CS02} 
and cocalibrated $\G$-structures \cite{CS06}. 
For all other classes, Theorems \ref{thm:class} and \ref{G2thm} 
provide new information concerning 
$\phisc$ and $\phigc$. To mention but one example, we shall characterise 
in Theorem \ref{thm:Dphisu3} 
 Riemannian spin $6$-manifolds admitting a harmonic spinor of
constant length. 
Theorem \ref{G2thm} states the analogue fact for $\G$-manifolds.
\bigbreak

We begin by reviewing algebraic aspects of the dimensions $6$ and 
$7$---and explain why it is more convenient to use, in the former case, 
real spinors instead of complex spinors. In section \ref{sec:tysu3} we  
carefully relate the various geometric quantities cropping up in special
Hermitian geometry, with particular care regarding: the vanishing or 
(anti-)symmetry of $S,\eta$, the intrinsic torsion, induced differential forms and 
Nijenhuis tensor,  Lee and K\"ahler forms,  
and the precise spinorial PDE for $\phi$. We introduce 
a connection well suited to describe the geometry, 
and its relationship to the more familiar characteristic connection.
The same programme is then carried out in section \ref{sec:g2} for $\G$-manifolds.
The first major application of this set-up occupies section \ref{sec:subman}: 
 our results can be used to study embeddings of $\SU(3)$-manifolds in
$\G$-manifolds and describe different types of cones (section \ref{sec:spincones}).
The latter results complement the first and last author's work
\cite{AH13}. 
This leads to the inception of a more unified picture 
relating the host of special spinor fields occuring in different
parts of the literature: Riemannian Killing spinors, generalised
Killing spinors, quasi-Killing spinors, Killing spinors with torsion 
etc. What we show in section \ref{sec:KswT} is that all those turn out 
to be special instances of the characterising spinor field equations for 
$\phi$ and $\phigc$ that we started with, and although looking, in general,
quite different, these equations can be drastically simplified in specific  
situations.\bigbreak

The pattern that emerges here clearly indicates that the spinorial approach 
is not merely the overhaul of an established theory. Our point is precisely that it 
should be used to describe efficiently these and other types of geometries, 
like $\SU(2)$- or $\Spin(7)$-manifolds, and that it provides more 
information than previously known.
Additionally, the explicit formulas furnish a working toolkit for understanding 
many different concrete examples, and for further study.
%
%
\section{Spin linear algebra}\label{sec:linal}
%
%
The real Clifford algebras in dimensions $6,7$ are isomorphic to 
End$(\R^8)$ and End$(\R^8) \oplus$ End$(\R^8)$ respectively. The spin 
representations are real and $8$-dimensional, so they coincide as 
vector spaces, and we denote this common space by 
$\Delta := \mathbb{R}^8$. By fixing an orthonormal basis $e_1, ... ,e_7$ of the 
Euclidean  space $\mathbb{R}^7$, one choice for the real representation of 
the Clifford algebra on $\Delta$ is \label{SU3basis}
\begin{align*}
 e_{1} &=  +E_{18} + E_{27} - E_{36} - E_{45}, \qquad
e_{2} =-E_{17} + E_{28} + E_{35} - E_{46},\\
e_{3} &= -E_{16} + E_{25} - E_{38} + E_{47}, \qquad
e_{4} = -E_{15} - E_{26} - E_{37} - E_{48},\\
e_{5} &= -E_{13} - E_{24} + E_{57} + E_{68}, \qquad
e_{6} =+E_{14} - E_{23} - E_{58} + E_{67},\\
e_{7} &=+E_{12} - E_{34} - E_{56} + E_{78},
\end{align*}
where the matrices $E_{ij}$ denote the standard basis elements of the 
Lie algebra 
$\so(8)$, i.\,e. the endomorphisms mapping $e_i$ to $e_j$, $e_j$ to $-e_i$ and 
everything else to zero.\smallbreak

We begin by discussing the $6$-dimensional case. Albeit real, the 
spin representation 
 admits a $\Spin(6)$-invariant complex structure 
$\js : \Delta \rightarrow \Delta$ defined be the formula
\bdm
\js \ := \ e_1 \cdot e_2 \cdot e_3 \cdot e_4 \cdot e_5 \cdot e_6  .
\edm 
Indeed, $\js^2 = -1$ and $\js$ anti-commutes with the Clifford 
multiplication by vectors of $\R^6$;
this reflects the fact that $\Spin(6)$ is isomorphic to $\SU(4)$.
The complexification of $\Delta$ splits,
\bdm
\Delta \otimes_{\R} \C \ = \ \Delta^+ \, \oplus \Delta^- ,
\edm 
a consequence of the fact that $j$ is a real structure making $(\Delta,j)$ 
 complex-(anti)-linearly isomorphic to either $\Delta^\pm$, via 
$\phi \tto \phi \, \pm \ i \cdot \js(\phi)$.
Any real spinor $0 \not= \phi \in \Delta$, furthermore, decomposes  $\Delta$ into three pieces,
\be\label{eq:splittingsu3}
\Delta \ = \ \R\phi  \oplus  \R\, j(\phi) \oplus  
\{ X \cdot \phi \, : \, X \in \R^6 \} .
\ee
In particular, $\js$ preserves the subspaces $\{ X \cdot \phi \, : \, X \in \R^6 \}
\subset \Delta$, and the formula
\bdm
\J(X) \cdot \phi \ := \ \js(X \cdot \phi)
\edm
defines an orthogonal complex structure $\J$ on $\R^6$ 
that depends on $\phi$. 
Moreover, the spinor determines a $3$-form by means of
\bdm
\psis(X,Y,Z) \ := \ -(X \cdot Y \cdot Z \cdot \phi \, , \, \phi)
\edm
where the brackets indicate the inner product on $\Delta$.
The pair $(\J , \psis)$ is an $\SU(3)$-structure on $\R^6$, and 
any such arises in this fashion from some real spinor. 
In certain cases this is an established 
 construction: a nearly K\"ahler structure 
may be recovered from the Riemannian Killing spinor \cite{Grunewald90}, 
for instance. All this can be summarised in the known fact
that $\SU(3)$-structures on $\R^6$ correspond one-to-one with
real spinors of length one $(\bmod \Z_2)$,
\bdm
\SO(6)/\SU(3)  
 \ \iso\ \P(\Delta) \ = \ \RP^7 .
\edm
\begin{exa}
Consider the spinor $\phi = (0,0,0,0,0,0,0,1) \in \Delta = \R^8$. With the basis chosen on p. \pageref{SU3basis}, then, $\J$ and $\psis$ read
\bdm
\J e_1 \, = \, -e_{2}, \quad \J e_3 \, = \, e_{4}, \quad 
\J e_5 \, = \, e_{6},\ 
\psis \ = \  e_{135} \, - \, e_{146} \, + \, e_{236} \, + e_{245},  
\edm
\end{exa}
where $e_{135}=e_1\w e_3\w e_5$ \&c. Throughout this article $e_i$ indicate  
tangent vectors and one-forms indifferently.
\smallbreak

Below we summarise formulas expressing the action of $\J$ and
$\psis$, whose proof is an easy exercise in local coordinates and so omitted.
\begin{lem}\label{lem:algrechnungendim6}
For any unit spinors $\phi, \phib$ and any vector $X \in \R^6$
\bdm
\psis \cdot \phi \, = \, -4 \cdot \phi , \quad
\psis \cdot \js(\phi) \, = \,  4 \cdot \js(\phi) , \quad
\psis \cdot \phib \, = \, 0  \;\; \text{if } \ \phib \perp \phi , \js(\phi) ,
\edm
\bdm
(X \haken \psis) \cdot \phi \, = \, 2 \, X \cdot \phi, \quad
\J \phi \, = \, 3 \, \js(\phi) , \quad   \J\left( \jphis\right) \, = \, - 3 \, \phi .
\edm
\end{lem}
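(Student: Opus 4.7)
The plan is to exploit that $\Spin(6)$ acts transitively on unit spinors of $\Delta$ and that every identity in the statement is $\Spin(6)$-equivariant in $\phis$; it therefore suffices to verify each at the base spinor $\phis_0=e_8\in\Delta$ of the Example, using the Clifford matrices on p.~\pageref{SU3basis} together with $\psis_0 = e_{135}-e_{146}+e_{236}+e_{245}$ (from the Example) and the one-time computation $\jphis_0 = e_1\cdots e_6\cdot e_8 = e_7$.

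For the three identities involving $\psis$ I would combine direct computation with a Schur-type argument. The Clifford action of a $3$-form on $\Delta$ is self-adjoint and $\SU(3)$-equivariant, and under $\SU(3)=\mathrm{Stab}(\phis)$ the splitting \eqref{eq:splittingsu3} decomposes into the two trivial summands $\R\phis$, $\R\jphis$ and the irreducible $6$-dimensional block $\{X\cdot\phis:X\in\R^6\}$, so $\psis$ acts by a scalar on each. The scalar on $\R\phis$ is read off directly from the definition $\psis(X,Y,Z)=-(X\cdot Y\cdot Z\cdot\phis,\phis)$ as $(\psis\cdot\phis,\phis)=-\|\psis\|^2=-4$; the scalar on $\R\jphis$ then equals $+4$ by the anti-commutation $\psis\cdot\js=-\js\cdot\psis$, itself a consequence of the six vectors in $\js=e_1\cdots e_6$ each anti-commuting with $\psis$. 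The fourth identity $(X\haken\psis)\cdot\phis=2X\cdot\phis$ is $\SU(3)$-equivariant in $X$, and $\SU(3)$ acts transitively on the unit sphere of $\R^6$, so it is enough to check it for $X=e_1$ at $\phis_0$: a short Clifford computation gives $(e_{35}-e_{46})\cdot e_8 = -2e_1 = 2(e_1\cdot e_8)$. Vanishing on the $6$-dimensional block then follows via the Clifford identity $\psis\cdot X+X\cdot\psis=-2\,X\haken\psis$, valid for any $3$-form and vector:
\[
\psis\cdot(X\cdot\phis)\;=\;-X\cdot\psis\cdot\phis-2(X\haken\psis)\cdot\phis\;=\;4X\cdot\phis-4X\cdot\phis\;=\;0.
\]

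For the last two identities I would interpret $\J$ by the standard abuse as the K\"ahler $2$-form $\omega(X,Y)=g(\J X,Y)$ acting on $\phis$ via Clifford multiplication. In the Example $\omega = -e_{12}+e_{34}+e_{56}$, and a direct Clifford computation shows that each of the three terms sends $e_8$ to $e_7=\jphis_0$, yielding $\J\cdot\phis = 3\jphis$. The sixth identity then follows because $2$-forms commute with $\js$ (two anti-commutations cancel): $\J\cdot\jphis=\js(\J\cdot\phis)=3\js^2\phis=-3\phis$. The only real obstacle is sign bookkeeping in the Clifford algebra---in particular the (anti)commutation between $\js$ and forms of varying parity, and the $3$-form/vector identity used above---but once those are settled the computations are entirely routine.
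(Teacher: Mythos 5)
The paper omits the proof (``an easy exercise in local coordinates''), and your overall strategy --- reduce to the base spinor $\phis_0=\delta_8$ by $\Spin(6)$-equivariance and compute there with the given Clifford matrices --- is exactly that exercise, streamlined. Your coordinate facts check out: $\jphis_0=\delta_7$, each of the four terms of $\psis_0$ sends $\delta_8$ to $-\delta_8$, $(e_1\haken\psis_0)\delta_8=-2\delta_1=2\,e_1\cdot\delta_8$, and each of the three terms of the K\"ahler $2$-form sends $\delta_8$ to $\delta_7$; the derivation of $\psis\cdot(X\phis)=0$ from the first and fourth identities via $\psis X+X\psis=-2\,X\haken\psis$ is a nice structural shortcut, and the commutation of $2$-forms with $\js$ correctly yields the last identity from the fifth.

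There is, however, one deduction that does not work as written: Schur's lemma does not give that $\psis$ ``acts by a scalar on each'' of $\R\phis$ and $\R\jphis$. These are two copies of the \emph{same} (trivial) $\SU(3)$-module, so equivariance only confines $\psis$ to the $2\times2$ block on $\R\phis\oplus\R\jphis$ (it does correctly exclude the $6$-dimensional summand, which is a nontrivial irreducible). Self-adjointness makes that block symmetric and the anti-commutation $\psis\js=-\js\psis$ makes it traceless, i.e.\ of the form $\bigl(\begin{smallmatrix}a&b\\ b&-a\end{smallmatrix}\bigr)$ in the basis $\{\phis,\jphis\}$ --- but nothing so far kills the off-diagonal entry $b=(\psis\phis,\jphis)$. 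If $b\neq0$, $\phis$ is not an eigenvector and your computation $(\psis\phis,\phis)=-\|\psis\|^2=-4$ only identifies the diagonal entry $a$, not the eigenvalue. You need the extra fact $b=0$, which follows either from a one-line check at the base point ($\psis_0\delta_8=-4\delta_8$ has no $\delta_7$-component) or from observing that $b=-\langle\psis,\psims\rangle=0$, since $\psis$ and $\psims$ are the pointwise orthogonal real and imaginary parts of the complex volume form. With $b=0$ supplied, the chain $\psis\phis=-4\phis\Rightarrow\psis\jphis=-\js\psis\phis=4\jphis\Rightarrow\psis\cdot(X\phis)=0$ is complete and the rest of your argument stands.
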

\bigbreak
In dimension $7$ the space $\Delta$ does not carry an invariant 
complex structure akin to $\js$. 
However, we still have a decomposition. If we take a 
non-trivial real spinor $0 \not= \phi \in \Delta$, we may split 
\be\label{G2-dec}
\Delta \ = \ \R \phi \, \oplus \ 
\{ X \cdot \phi \, : \, X \in \R^7 \}  ,
\ee
and we can still define a $3$-form
\bdm
\f(X,Y,Z)\ :=\ (X\cdot Y\cdot Z \cdot \phig,\phig).
\edm
It turns out that $\f$ is stable (its $\GL(7)$-orbit is open), and its 
isotropy group inside 
$\GL(7,\R)$  is isomorphic to the exceptional 
Lie group $\G\subset \SO(7)$. Thus we recover the renowned
fact that
there is a one-to-one correspondence between positive stable $3$-forms 
$\Psi \in \Lambda^3\R^7$ of fixed length  and real lines in $\Delta$:
\bdm
\SO(7)/\mathrm{G}_2\ \iso\ \P(\Delta) \ = \ \RP^7 \ .
\edm
In analogy to Lemma \ref{lem:algrechnungendim6}, here are formulas to be used in the sequel.
\begin{lem}\label{lem:algdim7}
Let $\f$ be a stable three-form on $\R^7$ inducing the spinor $\phi$, and suppose $\phib$ is a unit spinor orthogonal to $\phi$. Then
\bdm
\f \cdot \phi \, = \, 7 \phi , \quad 
\f \cdot \phi^* \, = \, - \phib , 
\quad (X \haken \f) \cdot \phi=-3X\cdot \phi. 
\edm
\end{lem}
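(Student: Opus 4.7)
The plan is to exploit the $\G$-equivariance of Clifford multiplication by $\f$, then reduce everything to two trace identities. Since $\f$ is $\G$-invariant by construction and $\phi$ is fixed by its own isotropy $\G \subset \Spin(7)$, the map $\psi \mapsto \f \cdot \psi$ is a $\G$-equivariant endomorphism of $\Delta$. Under $\G$, the splitting~(\ref{G2-dec}) is precisely the decomposition into irreducibles (trivial $\oplus$ standard), so Schur's lemma delivers scalars $\alpha, \beta \in \R$ with
\[
\f \cdot \phi \;=\; \alpha\, \phi, \qquad \f \cdot (X \cdot \phi) \;=\; \beta\, X \cdot \phi \qquad (X \in \R^7).
\]
Taking $X \cdot \phi = \phib$ in the second formula, the first two statements reduce to $\alpha = 7$ and $\beta = -1$.

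To pin down the two scalars I would first compute
\[
\alpha \;=\; (\f \cdot \phi, \phi) \;=\; \sum_{i<j<k} \f_{ijk}\,(e_i e_j e_k \phi, \phi) \;=\; \sum_{i<j<k} \f_{ijk}^{\,2} \;=\; \|\f\|^2
\]
straight from the definition. Then I invoke the general fact that any Clifford monomial $e_I$ with $0 < |I| < 7$ acts traceless on $\Delta$: conjugation by a suitable $e_a$ flips its sign (choose $a \notin I$ if $|I|$ is odd, $a \in I$ otherwise). Applied to $\f$ itself this yields $\alpha + 7\beta = \tr_\Delta \f = 0$. Applied to the degree-$2,4,6$ components of $\f^{\,2}$, it leaves only the scalar part of $\f^{\,2}$---which equals $\|\f\|^2$, since $e_I^{\,2} = +1$ for $|I| = 3$---to contribute to the trace on $\Delta$, so $\alpha^2 + 7\beta^2 = \tr_\Delta \f^{\,2} = 8\,\|\f\|^2$. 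Substituting $\alpha = \|\f\|^2$ and $\beta = -\alpha/7$ pins down $\alpha = 7$ and $\beta = -1$.

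The third identity then follows from the graded Clifford relation $X \cdot \omega + \omega \cdot X = -2\,(X \haken \omega)$, valid for a vector $X$ and a three-form $\omega$. Setting $\omega = \f$, acting on $\phi$, and invoking the first two formulas yields
\[
-2\,(X \haken \f) \cdot \phi \;=\; X \cdot (\f \cdot \phi) + \f \cdot (X \cdot \phi) \;=\; 7\, X \cdot \phi - X \cdot \phi \;=\; 6\, X \cdot \phi,
\]
whence $(X \haken \f) \cdot \phi = -3\, X \cdot \phi$.

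The main obstacle will really be sign-bookkeeping in the Clifford algebra, in particular the $(-1)^p$ appearing in the graded commutator of a vector with a $p$-form, and verifying the trace-vanishing claim cleanly for each of the degrees $2,3,4,6$ that come up. The essential conceptual input---already recorded in the excerpt via $\SO(7)/\G \cong \RP^7$---is that (\ref{G2-dec}) is exactly the $\G$-isotypic decomposition of $\Delta$, so that Schur's lemma can fire.
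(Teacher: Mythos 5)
Your argument is correct, and it takes a genuinely different route from the paper: the authors state Lemma \ref{lem:algdim7} without proof, referring (as for its six-dimensional sibling, Lemma \ref{lem:algrechnungendim6}) to ``an easy exercise in local coordinates'', i.e.\ a direct computation with the explicit matrices for $e_1,\dots,e_7$ given in Section \ref{sec:linal}. Your proof replaces that computation by representation theory: equivariance of $\f\,\cdot$ plus the $\G$-isotypic splitting \eqref{G2-dec} into the trivial and the standard $7$-dimensional module (both absolutely irreducible, so Schur really does give scalars $\alpha,\beta$), and then the two trace identities $\alpha+7\beta=0$ and $\alpha^2+7\beta^2=8\|\f\|^2$ combined with $\alpha=\|\f\|^2$. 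All steps check out: the trace-vanishing of Clifford monomials $e_I$ with $0<|I|<7$ is proved exactly by the conjugation trick you describe, the scalar part of $\f^2$ is $\sum_{i<j<k}\f_{ijk}^2$ because $(e_{ijk})^2=+1$ in the convention $e_i^2=-1$, and the graded relation $X\cdot\omega+\omega\cdot X=-2\,(X\haken\omega)$ holds for $3$-forms, so the third identity follows from the first two. The only points worth making explicit are that $\alpha=\|\f\|^2>0$ (so you may divide by $\alpha$ in the final step; this uses $\f\neq 0$, guaranteed by stability) and that the $7$-dimensional real irreducible of $\G$ has endomorphism algebra $\R$ (forced anyway by its odd dimension), so $\beta$ is an honest scalar. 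What your approach buys is basis-independence and the bonus identity $\|\f\|^2=7$ as a byproduct; what the paper's (implicit) approach buys is brevity and uniformity with the dimension-$6$ lemma, where the analogous representation-theoretic shortcut is less clean because $\psis$ mixes the summands $\R\phi$ and $\R\,j(\phi)$.
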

\begin{NB}
The existence of the unit spinor $\phi$ on $M^6$ is a general fact. 
Any $8$-dimensional real vector bundle over a $6$-manifold admits a 
unit section, see e.\,g. \cite[Ch. 9, Thm. 1.2]{Husemoller}. 
Consequently, an oriented Riemannian 
 $6$-manifold admits a spin structure if and only if it admits a reduction from
$\Spin(6)\cong \SU(4)$ to $\SU(3)$. 
The argument also applies to  $\Spin(7)$- and $\G$-structures, and was 
practised extensively in  \cite[Prop. 3.2]{FKMS}.
\end{NB}
%
\smallskip
The power of the approach presented in this paper is already manifest at this stage. Consider a $7$-dimensional Euclidean space $\bar U$ equipped with a 
$\G$-structure $\Psi \in \Lambda^3\bar U$. The latter induces an $\SU(3)$-structure 
on any codimension-one subspace $U$, which may be defined in two ways. 
One can restrict $\Psi$ to $U$, so that the inner 
product $V \haken \Psi$ with a normal vector $V$ defines a complex 
structure on $U$. 
But it is much simpler to remark that both structures, on $\bar U$ and $U$,  
correspond to the same choice of the real spinor $\phi \in \Delta$.  
%
%
\section{Special Hermitian geometry}\label{sec:tysu3}
%
%
The premises now in place, an $\SU(3)$-manifold will be a 
Riemannian spin manifold $(M^6,g,\phis)$ 
equipped with a global spinor $\phis$ of length one. We always denote 
its spin bundle by $\Sigma$ and the corresponding Levi-Civita connection by 
$\nabla$. The induced $\SU(3)$-structure is determined by the $3$-form 
$\psis$, while the $2$-form   $\omega(.\,,.)=g(.\,,J.)$ defines the 
underlying $\U(3)$-structure. From now onwards we will drop the symbol 
for the Clifford product, so $X\cdot \phi$ will simply read $X\phi$. 
\begin{dfn}
By decomposition (\ref{eq:splittingsu3}) there
exist a unique one-form $\eta\in T^*M^6$ and a unique section 
$S\in\End(TM^6)$ such that
\be\label{spineq.SU3}
\nabla_X \phis=\eta(X)  \jphis+S(X)\phi.
\ee
We call $S$ the \emph{intrinsic endomorphism} 
and  $\eta$ the \emph{intrinsic $1$-form} of the $\SU(3)$-manifold 
$(M^6,g,\phis)$; this terminology will be fully justified by Proposition \ref{prop:intrtorsu}. 
\end{dfn}
Recall that the geometric features of $M^6$ are 
captured \cite{SMS:redbook} (see also \cite{Ilka:Srni} and \cite{Friedrich03b}) 
by the intrinsic torsion $\Gamma$ which,  under 
$$\Lambda^2T^*M^6 \iso \so(6)=\su(3)\oplus\su(3)^\perp,$$
becomes a one-form with values in $\su(3)^\perp$.
For instance, nearly K\"ahler manifolds are those almost Hermitian manifolds 
for which $\Gamma$, identified with $\nabla\omega\in \Lambda^1\otimes \su(3)^\perp$, is skew:  $\nabla_X\omega(X,Y)=0,\ \forall\,X,Y$.
The aim is to recover the various $\SU(3)$-classes (complex, symplectic, lcK\ldots) essentially by reinterpreting the intrinsic torsion using $S$ and $\eta$. 
Besides $\psis$, we have 
a second, so-to-speak fundamental $3$-form
\bdm
\psims(X,Y,Z):=\psis(JX,JY,JZ)=-\psis(JX,Y,Z)=-(XYZ\phis,\jphis),
\edm
which gives the imaginary part of a  $\J$-holomorphic complex $3$-form 
(the real part being $\psis$).
As a first result, we prove that the intrinsic torsion can be expressed 
through $S$ and $\psims$, while $\eta$ is related to $\nabla\psims$---thus
generalizing the well-known definition of nearly K\"ahler manifolds cited above.
\begin{lem}\label{lem:nablaompsip} 
The intrinsic endomorphism $S$ and the intrinsic $1$-form $\eta$ are 
related to $\nabla\omega$ and $\nabla\psims$ through ($X,Y,Z$ any vector fields)
\bdm
(\nabla_X\omega)(Y,Z)=2\,\psims(S(X),Y,Z) ~~~ \mbox{ and } ~~~ 
8\, \eta(X)=-(\nabla_X\psims)(\psis).
\edm
\end{lem}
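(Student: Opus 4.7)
The plan is to start from spinor-bilinear realisations of $\omega$ and $\psims$, differentiate them covariantly, and substitute the defining equation (\ref{spineq.SU3}). Since $\js$ is parallel (being $\Spin(6)$-invariant) and anti-commutes with Clifford multiplication by vectors,
\begin{equation*}
\nabla_X\jphis \ =\ \js(\nabla_X\phis) \ =\ -\eta(X)\,\phis \ -\ S(X)\,\jphis.
\end{equation*}
I shall also use the realisation $\omega(Y,Z) = -(YZ\phis,\jphis)$, easily read off the Example, alongside the definition $\psims(Y,Z,W) = -(YZW\phis,\jphis)$.

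For the first identity, differentiate $\omega(Y,Z) = -(YZ\phis,\jphis)$ at a point where $Y,Z$ are parallel and substitute. Because $YZ$ has even Clifford degree, $\js$ commutes with it, and the orthogonality of $\js$ then gives $(YZ\jphis,\jphis) = (YZ\phis,\phis)$, so the $\eta$-contributions cancel. What remains is $-(\{YZ,S(X)\}\phis,\jphis)$, and the Clifford relation yields
\begin{equation*}
\{YZ,S(X)\} \ =\ 2\,S(X)YZ \ +\ 2g(Y,S(X))\,Z \ -\ 2g(Z,S(X))\,Y.
\end{equation*}
The two linear tails pair trivially with $\jphis$, since the skew-adjointness of $\js$ forces $(V\phis,\jphis)=0$ for every vector $V$. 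What survives is $-2(S(X)YZ\phis,\jphis) = 2\psims(S(X),Y,Z)$.

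The same machinery drives the second identity, but $YZW$ is now of odd Clifford degree, so $\js$ anti-commutes with it and $(YZW\jphis,\jphis) = -(YZW\phis,\phis)$. The $\eta$-contributions therefore reinforce one another, producing $-2\eta(X)\psis(Y,Z,W)$. An analogous expansion of $\{YZW,S(X)\}$ reduces to purely degree-two terms; pairing those with $\jphis$ assembles them into $-2\,S(X)^{\flat}\wedge\omega$ evaluated on $(Y,Z,W)$. Altogether
\begin{equation*}
\nabla_X\psims \ =\ -2\eta(X)\,\psis \ -\ 2\,S(X)^{\flat}\wedge\omega.
\end{equation*}
The second summand drops out of the inner product with $\psis$ by a type argument: writing $\psis = \Re\,\Omega$ for the $(3,0)$-form $\Omega$ associated to the $\SU(3)$-structure, $S(X)\haken\psis$ is of type $(2,0)+(0,2)$, hence orthogonal to the $(1,1)$-form $\omega$, so $(S(X)^{\flat}\wedge\omega,\psis) = (\omega,S(X)\haken\psis) = 0$. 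With $|\psis|^{2}=4$ (read off the Example), this gives $(\nabla_X\psims,\psis) = -8\eta(X)$.

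The main obstacle is bookkeeping: pinning down the signs in $\omega = -(YZ\phis,\jphis)$ and the normalisation $|\psis|^{2}=4$, correctly tracking parities when $\js$ is moved past Clifford products of two or three vectors, and cleanly expanding $\{YZW,S(X)\}$ so that no stray degree-zero or degree-four terms are left behind. Once these conventions are nailed down, the remaining manipulations are routine Clifford algebra.
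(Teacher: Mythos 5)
Your proof is correct, and for the first identity it is essentially the paper's own argument: differentiate a spinor-bilinear realisation of $\omega$, insert (\ref{spineq.SU3}) together with $\nabla_X\jphis=-\eta(X)\phis-S(X)\jphis$, and observe that the $\eta$-terms cancel for parity reasons while the anticommutator $\{YZ,S(X)\}$ leaves exactly $2\psims(S(X),Y,Z)$. For the second identity you take a genuinely different route to the same end. The paper never computes the $3$-form $\nabla_X\psims$: it differentiates the scalar $\psims(\psis)=-(\psis\phis,\jphis)$ directly and finishes with the Clifford identities $\psis\phis=-4\phis$, $\psis\jphis=4\jphis$ and $\psis\phib=0$ for $\phib\perp\phis,\jphis$ from Lemma \ref{lem:algrechnungendim6}, which kill the $S$-terms on the spot. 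You instead derive the full covariant derivative $\nabla_X\psims=-2\eta(X)\psis-2S(X)^\flat\wedge\omega$ and then discard the second summand by the type argument that $S(X)\haken\psis$ lies in the real forms of type $(2,0)+(0,2)$ and is hence orthogonal to the $(1,1)$-form $\omega$, before normalising with $|\psis|^2=4$. The paper's finish is shorter and purely algebraic; yours costs a little more Clifford bookkeeping but produces the explicit formula for $\nabla_X\psims$ as a by-product, which is the exact analogue of the first identity and independently useful (compare Lemma \ref{lem:dom}). Both arguments are sound.
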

\begin{proof}
We immediately find $\eta=(\nabla\phi,\jphis)$. Since $\js$ can be thought 
of as the volume form, it is parallel under $\nabla$ and we conclude
\bdm
\nabla_X(\jphis)=\js\nabla_X\phis=\js S(X)\phis+\js\eta(X)\jphis
=-S(X)\jphis-\eta(X)\phis.
\edm
With $\omega(X,Y)=-(X\phis,Y\jphis)$ we get
\bea[*]
-\nabla_X\omega(Y,Z)&=&X (Y\phis,Z\jphis) -(\nabla_XY\phis,Z\jphis)
-(Y\phis,\nabla_XZ\jphis)\\
& =& (Y\nabla_X\phis,Z\jphis)+(Y\phis,Z\nabla_X\jphis)
 = (YS(X)\phis,Z\jphis)-(Y\phis,ZS(X)\jphis)
\\
%
&=&-2\psims(S(X),Y,Z) .
\eea[*]
Similarly, we compute
\bea[*]
\nabla_X(\psims)(\psis)&=&-X(\psis\phis,\jphis)+(\nabla_X\psis\phis,\jphis) \\ 
& =& -(\psis S(X)\phis,\jphis)+(\psis\phis,S(X)\jphis)
-\eta(X)(\psis\jphis,\jphis)+\eta(X)(\psis\phis,\phis)\\
&=&2\eta(X)(\psis\phis,\phis)=-8\eta(X) .
\eea[*]
This finishes the proof.
\end{proof}
To understand the role of the pair $(S, \eta)$ better we shall employ 
the $\SU(3)$-connection 
\be\label{nabla^n}
\nabla^n_XY=\nabla_XY-\Gamma(X)(Y),
\ee
given by the Levi-Civita connection $\nabla$ minus  the intrinsic 
torsion, see \cite{SMS:redbook, Friedrich03b}. 
We shall always use only one symbol for covariant derivatives on the 
tangent bundle and their liftings to the spinor bundle $\Sigma$, whence
for any spinor $\phib$
\bdm
\nabla^n_X\phib=\nabla_X\phib-\frac{1}{2}\Gamma(X)\phib.
\edm
\begin{prop}\label{prop:intrtorsu}
The intrinsic torsion of the $\SU(3)$-structure $(M^6,g,\phis)$ is given by 
\bdm
\Gamma=S\hook\psis-\frac{2}{3}\eta\otimes\omega
\edm
where $S\hook\psis(X,Y,Z):=\psis(S(X),Y,Z)$.
\end{prop}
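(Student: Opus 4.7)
The plan is to propose the candidate $\tilde\Gamma(X) := S(X)\hook\psi - \tfrac{2}{3}\eta(X)\omega$ and verify, via the universal property of the intrinsic torsion, that $\Gamma = \tilde\Gamma$. Concretely, I need (a) $\tilde\Gamma$ to take values in $\su(3)^\perp$, and (b) $\nabla - \tilde\Gamma$ to be an $\SU(3)$-connection; uniqueness of the intrinsic torsion will then force the identification.

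For (a), I would use the splitting $\su(3)^\perp = \R\omega \oplus (\Lambda^{2,0}\oplus\Lambda^{0,2})_\R$ inside $\Lambda^2T_x^*M^6$, together with the fact that $V\mapsto V\hook\psi$ is a linear isomorphism from $T_xM^6$ onto the $(2,0)+(0,2)$-summand (both are $6$-dimensional over $\R$, and $\psi$ is nondegenerate). Both terms of $\tilde\Gamma(X)$ therefore lie pointwise in $\su(3)^\perp$.

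For (b), since the $\SU(3)$-structure is encoded by the unit spinor $\phi$, it is enough to show that $\nabla^{\mathrm{new}}:=\nabla-\tilde\Gamma$ annihilates $\phi$. On the spin bundle one has
\[
  \nabla^{\mathrm{new}}_X\phi \;=\; \nabla_X\phi \;-\; \tfrac{1}{2}\tilde\Gamma(X)\cdot\phi,
\]
the $\tfrac{1}{2}$ coming from the standard spin representation of a $2$-form. The two Clifford identities of Lemma \ref{lem:algrechnungendim6}---the one giving $(V\hook\psi)\cdot\phi = 2V\phi$ and the one expressing $\omega\cdot\phi$ as a multiple of $j\phi$---collapse $\tfrac{1}{2}\tilde\Gamma(X)\cdot\phi$ to $S(X)\phi+\eta(X)j\phi$, which is exactly $\nabla_X\phi$ by the defining equation (\ref{spineq.SU3}).

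Uniqueness is then immediate: any two $\su(3)^\perp$-valued $1$-forms producing $\SU(3)$-connections via subtraction from $\nabla$ differ by an element of $\Lambda^1 \otimes (\su(3)^\perp \cap \su(3)) = 0$, so $\Gamma = \tilde\Gamma$. The only delicate point in the whole argument is bookkeeping---the coefficient $-\tfrac{2}{3}$ is fixed precisely by the interplay of the spinorial-lift factor $\tfrac{1}{2}$ with the normalization in the Clifford identity for $\omega\cdot\phi$---and no heavier computation is required beyond those two Clifford relations.
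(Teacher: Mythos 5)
Your proposal is correct and takes essentially the same route as the paper: the same two Clifford identities from Lemma \ref{lem:algrechnungendim6} collapse $\tfrac12\tilde\Gamma(X)\cdot\phi$ to $S(X)\phi+\eta(X)j(\phi)$, and the same $\su(3)^\perp$-membership check (the paper verifies $(X\hook\psi)(Y,\J Z)=(X\hook\psi)(\J Y,Z)$ where you invoke the type decomposition) together with injectivity of $\su(3)^\perp\ni A\mapsto A\phi$ finishes the argument. The paper merely runs the computation in the opposite direction, reading $\Gamma$ off from $\nabla_X\phi=\tfrac12\Gamma(X)\phi$ instead of verifying the Ansatz and appealing to uniqueness.
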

\begin{proof}
The spinor $\phis$ is parallel for $\nabla^n$, as 
$\textrm{Stab}(\phis)= \SU(3)$, so 
$ \nabla_X\phis=\frac{1}{2}\Gamma(X)\phis$. 
By  Lemma \ref{lem:algrechnungendim6} we know that $\omega\phis=-3\jphis$, 
hence
\bdm
\nabla_X\phis=S(X)\phis+\eta(X)\jphis=\frac{1}{2}(S(X)\hook\psis)
\phis-\frac{1}{3}\eta(X)\omega\phis.
\edm
Since $(X\hook\psis)(Y,\J Z)=(X\hook\psis)(\J Y,Z)$ we see that 
$X\hook\psis\in\su(3)^\perp$, and as $\omega\in\su(3)^\perp$ the 
$1$-form $ 
S\hook \psis-\frac{2}{3}\eta\otimes \omega$  is the intrinsic torsion of 
the spin connection.  
\end{proof}
\begin{nota}
The original approach to the classification of
$\U(3)$-manifolds in  \cite{GH80} was by the covariant
derivative of the K\"ahler form. In analogy to their result,
one calls the seven `basic' irreducible 
modules of an $\SU(3)$-manifold the \emph{Gray-Hervella classes}. 
Throughout this paper 
they will be indicated $\WS_1^+,\; \WS_1^-,\;  \WS_2^+,\;  \WS_2^-,\;  
\WS_3,\;  \WS_4,\;  \WS_5$; 
for simplicity we often will write $\WS_j, \WS_{\bar j}$ for $\WS_j^+, \WS_j^-$ respectively, and shorten $\WS_1^+\oplus\WS_2^-\oplus\WS_4$ to 
$\WS_{1\bar24}$, \&c.
In \cite{CS02} the Gray-Hervella classes of  $\SU(3)$-manifolds
were derived in terms of the components of the intrinsic torsion, while
their identification with the covariant derivatives of the K\"ahler form 
and of the complex volume form may be found in \cite{Martin-Cabrera05}.
\end{nota}
The following result links the intrinsic
endomorphism $S$ and the intrinsic $1$-form $\eta$ (and thus the spinorial 
field equation (\ref{spineq.SU3}))
directly to the Gray-Hervella classes $\WS_i$.
\begin{lem}\label{lem:classsu3}
The basic classes of an $\SU(3)$-structure $(M^6,g,\phis)$ are 
determined as follows, where $\lambda, \mu\in\R$:
\begin{center}
\begin{tabular}{|c|c|c|}
\hline
class&description&dimension\\
\hline
 \hline
$\WS_1$& $S=\lambda\,\J$, $\eta=0$&$1$\\
\hline
$\WS_{\bar1}$& $S=\mu\, \Id$, $\eta=0$&$1$\\
\hline
$\WS_2$& $S\in\su(3)$, $\eta=0$&$8$\\
\hline
$\WS_{\bar2}$& $S\in\{A\in S_0^2T^*M|A\J=\J A\}$, $\eta=0$&$8$\\
\hline
$\WS_3$& $S\in\{A\in S_0^2T^*M|A\J=-\J A\}$, $\eta=0$&$12$\\
\hline
$\WS_4$& $S\in\{A\in\Lambda^2(\R^6)|A\J=-\J A\}$, $\eta=0$&$6$\\
\hline
$\WS_5$& $S=0$, $\eta\neq0$&$6$\\
\hline
\end{tabular}
\end{center}
\end{lem}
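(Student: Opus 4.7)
The plan is to view the assignment $(S,\eta) \mapsto \Gamma$ from Proposition \ref{prop:intrtorsu} as an $\SU(3)$-equivariant isomorphism between the parameter space $\End(TM^6) \oplus \Lambda^1$ and the intrinsic torsion module $\Lambda^1 \otimes \su(3)^\perp$, decompose both sides into $\SU(3)$-irreducibles, and match the seven pieces to the Gray--Hervella classes.

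First I would use the splitting $\su(3)^\perp = \R\omega \oplus \Lambda^2_{-J}$ inside $\so(6)$, where $\Lambda^2_{-J}$ denotes the $6$-dimensional irreducible module of $J$-anti-invariant $2$-forms, together with the equivariant isomorphism $X \mapsto X \hook \psis$ between $TM^6$ and $\Lambda^2_{-J}$ already used in the proof of Proposition \ref{prop:intrtorsu}. Plugging this into $\Gamma = S \hook \psis - \tfrac{2}{3}\eta \otimes \omega$ shows that $\eta$ parametrises the $\Lambda^1 \otimes \R\omega$ factor while $S$ parametrises $\Lambda^1 \otimes \Lambda^2_{-J} \iso \End(TM^6)$; the dimension sum $6+36=42$ agrees with $\sum_i \dim\WS_i$, so the assignment is already a bijection. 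Further splitting $\End(TM^6)$ under $\SU(3)$ into symmetric/skew and $J$-commuting/anti-commuting summands gives
\bdm
\End(TM^6)\ =\ \R\,\Id \ \oplus\ S^2_{0,J} \ \oplus\ S^2_{0,-J} \ \oplus\ \R\,\J \ \oplus\ \su(3) \ \oplus\ \Lambda^2_{-J},
\edm
of dimensions $1+8+12+1+8+6$, precisely the six tabulated pieces for $S$.

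The main obstacle is pairing each irreducible summand with its correct $\WS_i$, as Schur's lemma alone does not suffice: three pairs of components are isomorphic as $\SU(3)$-modules, namely the two trivials $\WS_1,\WS_{\bar 1}$, the two adjoints $\WS_2,\WS_{\bar 2}$, and the two standard $6$-dim representations $\WS_4,\WS_5$. I would pin these down case by case using Lemma \ref{lem:nablaompsip}. Concretely, $S = \mu\,\Id$ reduces (\ref{spineq.SU3}) to the classical Killing spinor equation and hence characterises strict nearly K\"ahler manifolds by \cite{Grunewald90}, fixing $\WS_{\bar 1}$; while $S = \lambda\,\J$ gives $(\nabla_X\omega)(Y,Z) = 2\lambda\,\psis(X,Y,Z)$, placing the structure in $\WS_1$. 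Since $\eta$ alone accounts for $\WS_5$, the remaining $6$-dim module $\Lambda^2_{-J} \subset \End(TM^6)$ is forced to be $\WS_4$, and the two adjoint components are separated by inspecting $\nabla\omega$ and $\nabla\psims$ via Lemma \ref{lem:nablaompsip} and comparing with the classical Gray--Hervella description in \cite{CS02,Martin-Cabrera05}. The surviving $12$-dim piece $S^2_{0,-J}$ must then be $\WS_3$, completing the identification.
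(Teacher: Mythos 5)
The paper does not actually prove this lemma: the table is asserted outright, with the identification of the $\SU(3)$ Gray--Hervella classes in terms of intrinsic-torsion components delegated to the cited references \cite{CS02} and \cite{Martin-Cabrera05} (see the Notation paragraph preceding the statement). Your reconstruction is therefore genuinely additional content, and it follows the route one would expect: pass through Proposition \ref{prop:intrtorsu}, decompose $\Lambda^1\otimes\su(3)^\perp=(\Lambda^1\otimes\R\omega)\oplus(\Lambda^1\otimes\Lambda^2_{-J})$, identify the second factor with $\End(TM^6)$ via $X\mapsto X\hook\psis$, and split $\End(TM^6)$ into the six pieces of dimensions $1+8+12+1+8+6$. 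The decomposition and the dimension bookkeeping are correct, and you rightly observe that Schur's lemma leaves three two-fold ambiguities (trivial, adjoint, standard isotypic components) that must be resolved by explicit computation. Two small points deserve tightening. First, ``the dimension sum agrees, so the assignment is already a bijection'' is not an argument: equality of dimensions gives bijectivity only once injectivity is known, which here follows from the nondegeneracy of the stable form $\psis$ (i.e.\ $V\hook\psis=0$ forces $V=0$, so $S\mapsto S\hook\psis$ is injective, and its image is transverse to $\Lambda^1\otimes\R\omega$). Second, your separation of $\WS_4$ from $\WS_5$ is clean --- $\nabla\omega$ depends only on $S$ by Lemma \ref{lem:nablaompsip}, so $\eta$ can only feed the one class invisible in $\nabla\omega$, namely $\WS_5$ --- but the separation of $\WS_2$ from $\WS_{\bar2}$ is left at the level of ``compare with the classical description''; to be self-contained one should note that the $\su(3)$ and $S^2_{0,J}$ summands drop out of $d\omega$ (cf.\ Lemma \ref{lem:dom}) and are distinguished by which of $d\psis$, $d\psims$ they enter, exactly as in \cite{CS02}. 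With those two repairs your argument is a complete proof, and arguably more informative than the paper's bare citation.
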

In particular, the class is $\WS_{\bar1\bar23}$ if and only if 
$S$ is symmetric and $\eta$ vanishes, recovering a result of \cite{CS06}.
\subsection{Spinorial characterisation}
The description of $\SU(3)$-structures in terms of $\phis$ is the main 
result of this section. 
To start with, we discuss geometric quantities that pertain the 
$\SU(3)$-structure and 
how they correspond to $\phis$. Denote by $D$ the Riemannian Dirac operator.
\begin{lem}\label{lem:omsu3}
The $\WS_4$ component of the intrinsic torsion of an $\SU(3)$-manifold  is 
determined by
\bdm
\delta\omega(X)=2[(D\phis,X\jphis)-\eta(X)],
\edm
and in particular $\delta\omega=0$ is equivalent to $(D\phis,X\jphis)=\eta(X)$.
The Lee form is given by 
\bdm
\theta(X)=\delta\omega\circ J(X)=2(D\phis,X\phis)-2\eta\circ J(X).
\edm
\end{lem}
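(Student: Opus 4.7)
The plan is to relate both sides by expanding the Dirac operator $D\phis = \sum_i e_i \cdot \nabla_{e_i}\phis$ through the spinor equation~\eqref{spineq.SU3}, simultaneously expanding $\delta\omega$ through Lemma~\ref{lem:nablaompsip}, and finally invoking the orthogonal decomposition~\eqref{eq:splittingsu3} to dispose of the undesirable remainders. Starting from $\delta\omega(X) = -\sum_i (\nabla_{e_i}\omega)(e_i, X)$ and inserting Lemma~\ref{lem:nablaompsip}, I would obtain $\delta\omega(X) = -2\sum_i \psims(S(e_i), e_i, X)$; rewriting $\psims$ spinorially via $\psims(U,V,W) = -(UVW\phis, \jphis)$ converts this into $\delta\omega(X) = 2\sum_i (S(e_i)\, e_i\, X\, \phis,\, \jphis)$.

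The next step is a Clifford rearrangement: using $AB = -BA - 2g(A,B)$ twice, I would bring $X$ to the left and produce the identity
\[
\sum_i S(e_i)\, e_i\, X \;=\; -X\sum_i e_i\, S(e_i) \;-\; 2(S - S^*)(X) \;-\; 2\tr(S)\cdot X,
\]
where $S^*$ denotes the adjoint of $S$. The spinor equation then gives $\sum_i e_i\, S(e_i)\phis = D\phis - \eta^\sharp\cdot\jphis$; substituting this and pairing with $\jphis$ triggers two cancellations. First, $(Y\phis, \jphis) = 0$ for every vector $Y$, because $Y\phis$ and $\jphis$ belong to different summands of~\eqref{eq:splittingsu3}, so the $(S-S^*)(X)\phis$ and $\tr(S)\cdot X\phis$ corrections vanish. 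Second, Clifford skew-adjointness turns $-(X D\phis, \jphis)$ into $(D\phis, X\jphis)$, and Clifford isometry yields $(\eta^\sharp\jphis, X\jphis) = \eta(X)$; what is left reads exactly $\delta\omega(X) = 2[(D\phis, X\jphis) - \eta(X)]$.

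The Lee form identity should then follow by substituting $\J X$ for $X$ in the first formula. Since $\js$ anticommutes with Clifford multiplication by vectors and $\J(Y)\cdot\phis := \js(Y\cdot\phis)$, iterating gives $\J(\J X)\cdot\phis = \js^2(X\cdot\phis) = -X\cdot\phis$, i.e.\ $\J X\cdot\jphis = X\cdot\phis$, and the formula $\theta(X) = 2(D\phis, X\phis) - 2\eta\circ\J(X)$ drops out at once. The main obstacle is the Clifford book-keeping of the middle step: each commutation spawns a correction term, and the success of the argument hinges on those corrections landing inside the subspace $\{Y\phis : Y\in\R^6\}$ of~\eqref{eq:splittingsu3}, so that the final pairing with $\jphis$ annihilates them.
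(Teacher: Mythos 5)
Your argument is correct, but it takes a more roundabout path than the paper's. The paper never passes through Lemma \ref{lem:nablaompsip}: it expands $(\nabla_X\omega)(Y,Z)$ directly from $\omega(Y,Z)=-(Y\phis,Z\jphis)$ and the spinor equation, obtaining $(\nabla_X\omega)(Y,Z)=-2(YZ\nabla_X\phis,\jphis)-2g(Y,Z)\eta(X)$; contracting over an orthonormal frame then produces $\sum_i e_i\nabla_{e_i}\phis=D\phis$ immediately, because the basis vector is already adjacent to the covariant derivative. You instead start from $(\nabla_X\omega)(Y,Z)=2\psims(S(X),Y,Z)$, which places $S(e_i)$ in the first slot, and therefore need the Clifford commutation identity $\sum_i S(e_i)e_iX=-X\sum_i e_iS(e_i)-2(S-S^*)(X)-2\tr(S)X$ together with $\sum_i e_iS(e_i)\phis=D\phis-\eta^\sharp\jphis$ to reassemble the Dirac operator; the correction terms indeed die against $\jphis$ by the orthogonality of the splitting \eqref{eq:splittingsu3}, and the signs work out to $2[(D\phis,X\jphis)-\eta(X)]$. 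So the extra Clifford book-keeping you worried about is genuinely needed in your route and genuinely absent from the paper's. For the Lee form, your conclusion $\J X\cdot\jphis=X\cdot\phis$ is right, but the cleanest derivation is the one-liner $\J X\cdot\js(\phis)=-\js(\J X\cdot\phis)=-\js^2(X\cdot\phis)=X\cdot\phis$, using that $\js$ anticommutes with Clifford multiplication by vectors and squares to $-1$; the ``iterating'' step as you phrase it ($\J(\J X)\cdot\phis=\js^2(X\cdot\phis)$) only restates the definition and does not by itself yield the identity you need.
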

\begin{proof}
We have
\bdm
(\nabla_X\omega)(Y,Z)=(ZY\nabla_X\phis,\jphis)+(ZY\phis,\nabla_X\jphis)
=-2(YZ\nabla_X\phis,\jphis)-2g(Y,Z)\eta(X),
\edm
leading to
\bea[*]
\delta\omega(X)&=&-\sum_i(\nabla_{e_i}\omega)(e_i,X)
=\sum_i(\nabla_{e_i}\omega)(X,e_i)\\
&=&-2\sum_i((Xe_i\nabla_{e_i}\phis,\jphis)-g(X,e_i)\eta(e_i))\\
&=&-2(XD\phis,\jphis)-2\eta(X)=2(D\phis,X\jphis)-2\eta(X).
\eea[*]
\vspace{-1.3cm}\\
\end{proof}
\vspace{4mm}

We consider the space of all possible types 
$T^*M^6\otimes\phis^{\perp}\ni\nabla\phis$, where 
$\phis^{\perp}=\R\jphis\oplus\{X\phis~|~X\in TM^6\}$ 
is the orthogonal complement of 
$\phis$. The Clifford multiplication restricts then to a map 
\bdm
  \cliffmult: \ T^*M^6\otimes\phis^{\perp}\ra\Sigmas.
\edm
Let $\pi:\Spin(6)\ra\SO(6)$ be the usual projection. For any $h\in\Spin(6)$ we have 
\bdm
\cliffmult(\pi(h)\eta\otimes h\phib)=h\eta h^{-1}h\phib=h\cliffmult(\eta\otimes\phib)
\edm
and $\cliffmult$ is $\Spin(6)$-equivariant and thus $\SU(3)$-equivariant. Comparing the 
dimensions of the modules appearing in (\ref{eq:splittingsu3}) and the ones of 
Lemma \ref{lem:classsu3} we see that $\WS_{2\bar23}\subset \Ker(\cliffmult)$, and using 
\bdm
D\phis=6\lambda\jphis \mbox{ for } S=\lambda \J\quad \mbox{ and } \quad 
D\phis=-6\mu\phis \mbox{ for } S=\mu \Id
\edm
we find correspondences 
\bdm 
\WS_1\ra \R\jphis\; \mbox{ and }\; \WS_{\bar1}\ra \R\phis,
\edm
together with $(D\phis,\jphis)=6\lambda$ and $(D\phis,\phis)=-6\mu$.
\smallbreak 

Let us look at $\WS_{45}$ closer: recall that $\{\J e_i\phis,\phis,\jphis\},\ i=1,\ldots, 6$ is a basis 
of $\Sigmas$ for some local orthonormal frame $e_i$, hence
\bea[*]
D\phis&=&\sum_{i=1}^6(D\phis,\J e_i\phis)\J e_i\phis+(D\phis,\phis)\phis+(D\phis,\jphis) \jphis.
\eea[*]
With Lemma \ref{lem:omsu3} we conclude that
\bea[*]
D\phis&=&\sum_{i=1}^6[\tfrac{1}{2}\delta\omega(e_i)+\eta(e_i)]e_i\jphis
+6\lambda\jphis-6\mu\phis=(\tfrac{1}{2}\delta\omega+\eta)\jphis+6\lambda\jphis-6\mu\phis.
\eea[*]
Therefore, as image of $m$, the component $\R^6$ of $\Sigma$ 
is determined by $\delta\omega+2\eta$. This line of thought immediately 
proves
\begin{thm}\label{thm:Dphisu3}
A $6$-dimensional Riemannian spin manifold $(M,g)$ carries a unit spinor 
$\phis$ lying in the kernel of the Dirac operator 
\bdm
D\phis=0
\edm
if and only if it admits an $\SU(3)$-structure of class $\WS_{2\bar2345}$ 
with the restriction $\delta\omega=-2\eta$. \\
The `complementary' torsion components 
$\WS_1$ and $\WS_{\bar1}$ are determined by the scalars
\bdm
\lambda\ =\ \frac 16(D\phis,\jphis)\ =\ -\frac 16 \tr(\J S) 
\quad \mbox{ and } \quad
\mu \ =\ -\frac16 (D\phis,\phis)\ =\ \frac 16\tr(S).
\edm
\end{thm}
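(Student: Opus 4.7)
The plan is to compute $D\phi$ explicitly from the defining equation (\ref{spineq.SU3}) and then read off the vanishing of $D\phi$ component by component under the $\SU(3)$-invariant splitting (\ref{eq:splittingsu3}) of $\Sigma$. Inserting (\ref{spineq.SU3}) into $D\phi=\sum_i e_i\,\nabla_{e_i}\phi$ yields
\[
D\phi \ =\ \sum_i \eta(e_i)\,e_i\,j(\phi) \ +\ \sum_i e_i\,S(e_i)\,\phi.
\]

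Next, I would decompose $S$ into its $\SU(3)$-irreducible pieces according to Lemma \ref{lem:classsu3} and compute $\sum_i e_i\,S_\alpha(e_i)$ on each one, using the Clifford identity $\sum_i e_i\,A(e_i) = -\tr A - 2A^{sk}$, where $A^{sk}$ denotes the skew part of $A$ viewed as a $2$-form acting on spinors. The $\WS_{\bar2}$- and $\WS_3$-summands are traceless and symmetric, so both terms vanish. The $\WS_2$-piece is skew with corresponding $2$-form in $\su(3)\subset\Lambda^2$, which annihilates $\phi$ since $\textrm{Stab}(\phi)=\SU(3)$. The $\lambda J$-piece gives $-2\lambda\,\omega\,\phi = 6\lambda\,j(\phi)$ by Lemma \ref{lem:algrechnungendim6}, while $\mu\,\Id$ contributes $-6\mu\,\phi$. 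The remaining $\WS_4$-part of $S$, together with the $\eta$-term, lands in $\{X\phi\}$ and, as computed in the paragraph preceding the theorem via Lemma \ref{lem:omsu3}, sums to $\sum_i\bigl[\tfrac12\delta\omega(e_i)+\eta(e_i)\bigr]\,e_i\,j(\phi)$. Collecting everything,
\[
D\phi \ =\ -6\mu\,\phi \ +\ 6\lambda\,j(\phi) \ +\ \sum_i \bigl[\tfrac12\delta\omega(e_i)+\eta(e_i)\bigr]\,e_i\,j(\phi),
\]
and the three summands sit in mutually orthogonal factors of (\ref{eq:splittingsu3}).

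Consequently, $D\phi = 0$ is equivalent to the simultaneous vanishing $\mu = 0$, $\lambda = 0$, and $\delta\omega = -2\eta$. By Lemma \ref{lem:classsu3}, the first two conditions say that the $\WS_1\oplus\WS_{\bar 1}$-part of the intrinsic torsion is zero, i.e.\ the structure is of class $\WS_{2\bar 2 3 4 5}$, while the third is the extra constraint in the statement. For the scalar formulas, pair the expression for $D\phi$ with $j(\phi)$ and $\phi$ respectively to obtain $\lambda = \tfrac16(D\phi,j(\phi))$ and $\mu = -\tfrac16(D\phi,\phi)$. The trace identities $\lambda=-\tfrac16\tr(JS)$ and $\mu=\tfrac16\tr(S)$ then follow from $\tr(J\cdot\lambda J) = -6\lambda$, $\tr(\mu\,\Id) = 6\mu$, and the routine check that all remaining Gray-Hervella pieces contribute zero to both $\tr(S)$ and $\tr(JS)$ (either by being traceless and symmetric, or by $JS_\alpha$ being skew-symmetric in each relevant block).

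The only genuinely non-routine input is the annihilation $\su(3)\cdot\phi = 0$ used for the $\WS_2$-piece, which is precisely the algebraic fact underlying $\textrm{Stab}(\phi)=\SU(3)$. Everything else is bookkeeping with Lemmas \ref{lem:algrechnungendim6} and \ref{lem:omsu3}, and in fact the core calculation has effectively been carried out in the discussion immediately preceding the theorem.
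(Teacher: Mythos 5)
Your proof is correct and follows essentially the same route as the paper: both expand $D\phis$ against the orthogonal splitting $\R\phis\oplus\R\jphis\oplus\{X\jphis\}$, identify the first two coefficients as $-6\mu$ and $6\lambda$, and use Lemma \ref{lem:omsu3} to recognise the remaining component as $\tfrac12\delta\omega+\eta$, so that $D\phis=0$ becomes $\lambda=\mu=0$ together with $\delta\omega=-2\eta$. The only cosmetic difference is that you establish $\WS_{2\bar23}\subset\Ker(\cliffmult)$ by the explicit Clifford identity $\sum_i e_iA(e_i)=-\tr A\pm 2A^{\mathrm{sk}}$ together with $\su(3)\cdot\phis=0$, whereas the paper deduces the same fact from $\SU(3)$-equivariance of $\cliffmult$ and a dimension count.
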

%
%
One cannot but notice that harmonic spinors can exist on manifolds 
whose class is the opposite to that of nearly K\"ahler manifolds. 
The consequences of this observation remain -- at this stage -- to be seen, and 
 will be addressed in forthcoming work.\smallbreak 

The linear combination $\WS_4+2\WS_5$ vanishing in the theorem also shows up 
(up to a choice of volume) in \cite{Cardosoetal03} and plays a 
role in supersymmetric compactifications of heterotic string theory.
\begin{exa}
Consider the Lie algebra $\g=\textrm{span}\{e_1,\ldots,e_6\}$ 
with structure equations
$$
\vec{d}=(e_{34}+2e_{35},e_{45},0,0,0,e_{51}+e_{23})
$$
in terms of the Chevalley-Eilenberg differential 
$d\beta(e_i,e_j)=\beta[e_j,e_i], \forall\beta\in\g$.
Since the structure constants are rational the corresponding $1$-connected Lie 
group $G$ contains a co-compact lattice $\Lambda$. We consider the spin structure 
on $M^6=G/\Lambda$ 
determined by choosing $\phis=(1,1,0,0,0,0,0,0)$. This gives us
\bdm
S=- \frac{1}{4}\left[\begin{smallmatrix}
   0&0&&&&\\
   -2&0&&&&\\
  &&0&1&-1&0\\
  &&1&0&0&-1\\
  &&-1&0&0&-1\\
  &&0&1&1&0
  \end{smallmatrix}\right], \q
\eta=- \frac{1}{2} e_1 
\edm
and it is not hard to see that $D\phi=0$. The structure is of class
$\WS_{2\bar2345}$, and the presence of component nr.~$5$ is 
reflected in the non-vanishing $\eta$. 
\end{exa}
\begin{nota}\label{eq:decompS}
Recalling Lemma \ref{lem:classsu3} we decompose the intrinsic endomorphism into
\bdm
S=\lambda\,\J+\mu\,\Id+S_2+S_{34}
\edm
where $\J$ commutes with $S_2$, anti-commutes with $S_{34}$, and both $S_2$ and $\J S_2$ are 
trace-free.
\end{nota}
\bigbreak

The next results discusses the Nijenhuis tensor $N_J(X,Y)=8\Re[X^{1,0},Y^{1,0}]^{0,1}$, whose vanishing tells that $M$ is a complex manifold. The customary trick in a Riemannian setting is to view it as a three-tensor $N(X,Y,Z)=g(N_J(X,Y),Z)$ by contracting with the metric.
\begin{lem}\label{lem:Niejn}
The $\WS_{1\bar12\bar2}$ component is controlled by the Nijenhuis tensor
\bdm
N(X,Y,Z)\ =\ -2[\psims((\J S+S\J)X,Y,Z)-\psims((\J S+S \J)Y,X,Z)].
\edm
Therefore if the class is $\WS_{1\bar1345}$, the Nijenhuis tensor reads 
\bdm
N(X,Y,Z)\ =\ 8[\lambda\psims(X,Y,Z)-\mu \psis(X,Y,Z)].
\edm
\end{lem}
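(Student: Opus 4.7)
My plan is to reduce the Nijenhuis tensor to an expression in $\nabla\omega$ and then invoke Lemma \ref{lem:nablaompsip}. Starting from
$$N_J(X,Y) \ = \ [JX,JY] - J[JX,Y] - J[X,JY] - [X,Y],$$
I would expand every bracket via the torsion-free identity $[U,V] = \nabla_U V - \nabla_V U$ and push $J$ past each covariant derivative, producing a $(\nabla J)$-term each time. The pure $\nabla$-pieces cancel in pairs, leaving
$$N_J(X,Y)\ =\ (\nabla_{JX}J)Y - (\nabla_{JY}J)X + J(\nabla_Y J)X - J(\nabla_X J)Y.$$
Pairing with $g(\cdot,Z)$ and using $g((\nabla_U J)V,W) = -(\nabla_U\omega)(V,W)$ together with $g(J\alpha,Z) = -g(\alpha,JZ)$, this becomes a sum of four $(\nabla_\bullet\omega)(\bullet,\bullet)$-terms.

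The next step is to substitute $(\nabla_X\omega)(Y,Z) = 2\psims(SX,Y,Z)$ from Lemma \ref{lem:nablaompsip}, and to exploit the fact that $\psis + i\psims$ is a complex form of type $(3,0)$ on $(TM^6, J)$. Multiplying any slot of a $(3,0)$-form by $J$ is the same as multiplying by $i$, which translates into the real identities
$$\psims(JA,B,C)\ =\ \psims(A,JB,C)\ =\ \psims(A,B,JC)\ =\ \psis(A,B,C),$$
together with $\psis(JA,\cdot,\cdot) = -\psims(A,\cdot,\cdot)$ in any slot. These allow me to transport the $J$ in the third entry of each summand over to the first, so that the four terms collapse pairwise into the stated form
$$N(X,Y,Z) \ =\ -2[\psims((JS+SJ)X,Y,Z) - \psims((JS+SJ)Y,X,Z)].$$

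For the second identity, I would apply the decomposition of Notation \ref{eq:decompS}, namely $S = \lambda J + \mu\,\Id + S_2 + S_{34}$ with $S_2$ commuting and $S_{34}$ anti-commuting with $J$. By Lemma \ref{lem:classsu3}, in the class $\WS_{1\bar1 345}$ the classes $\WS_2$ and $\WS_{\bar 2}$ are absent; these together exhaust the $J$-commuting trace-free part of $S$, so $S_2 = 0$. A quick calculation then yields
$$JS+SJ \ = \ 2\lambda J^2 + 2\mu J + (JS_{34}+S_{34}J) \ = \ -2\lambda\,\Id + 2\mu J,$$
the last bracket vanishing thanks to the anti-commutation $S_{34}J=-JS_{34}$. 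Plugging this into the general formula and using $\psims(JA,B,C) = \psis(A,B,C)$ together with the antisymmetry of $\psims$ as a $3$-form produces $N(X,Y,Z) = 8[\lambda\psims(X,Y,Z) - \mu\psis(X,Y,Z)]$.

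The most delicate part will be the sign bookkeeping in the reduction of $N_J$ to $\nabla J$, and checking that the four $J$-contracted copies of $\psims$ really do combine correctly under the $(3,0)$-type identity; once those are pinned down, the remainder is straightforward algebra.
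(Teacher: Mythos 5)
Your proposal is correct and follows essentially the same route as the paper: reduce $N$ to the four terms $-(\nabla_X\omega)(\J Y,Z)+(\nabla_Y\omega)(\J X,Z)-(\nabla_{\J X}\omega)(Y,Z)+(\nabla_{\J Y}\omega)(X,Z)$, substitute Lemma \ref{lem:nablaompsip}, use the type-$(3,0)$ slot identities (equivalently $\psims(\J A,\cdot,\cdot)=\psis(A,\cdot,\cdot)$, which follows directly from $\psims=-\psis(\J\cdot,\cdot,\cdot)$) to collect the $\J$'s into $\J S+S\J$, and then specialise via $S=\lambda\J+\mu\,\Id+S_{34}$ to get $\J S+S\J=-2\lambda\,\Id+2\mu\J$. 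The only difference is that you derive the $(\nabla\J)$-expression for $N_J$ explicitly from the bracket definition, a step the paper takes for granted; all signs and coefficients work out to the stated formulas.
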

\begin{proof}
 We have $g((\nabla_X\J)Y,Z)=-(\nabla_X\omega)(Y,Z)$, and from Lemma \ref{lem:nablaompsip} 
\begin{align*}
N(X,Y,Z)&= -(\nabla_X\omega)(\J Y,Z)+(\nabla_Y\omega)(\J X,Z)
-(\nabla_{\J X}\omega)(Y,Z)+(\nabla_{\J Y}\omega)(X,Z)\\
&=2[-\psims(SX,\J Y,Z)+\psims(SY,\J X,Z)-\psims(S\J X,Y,Z)+\psims(S\J Y,X,Z)]\\
&=2[-\psims((\J S+S\J )X,Y,Z)+\psims((\J S+S\J )Y,X,Z)].
\end{align*}
Futhermore for $S=\lambda\J+\mu\Id+S_{34}$ we have
\bdm
\J S+S\J=\J(S_{34}+\lambda \J+\mu \Id)+(S_{34}+\lambda \J+\mu \Id)\J
=-2\lambda \Id+2\mu \J,
\edm
as claimed.
\end{proof}
Eventually, $\WS_{1\bar134}$ depends on $d\omega$ in the following way: 
\begin{lem}\label{lem:dom}
Retaining Notation \ref{eq:decompS} we have
\bdm
d\omega(X,Y,Z)=6\lambda\psis(X,Y,Z)+6\mu\psims(X,Y,Z)
+2\cyclic{XYZ}\psims(S_{34}(X),Y,Z).
\edm
\end{lem}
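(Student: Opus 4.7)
The plan is to combine Lemma~\ref{lem:nablaompsip} with the standard torsion-free identity
$$d\omega(X,Y,Z)=\cyclic{XYZ}(\nabla_X\omega)(Y,Z)=2\cyclic{XYZ}\psims(S(X),Y,Z),$$
and then substitute $S=\lambda\J+\mu\,\Id+S_2+S_{34}$ and analyse the four resulting cyclic sums separately.

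The three easy pieces come first. From $\psims(X,Y,Z)=-\psis(\J X,Y,Z)$ together with $\J^2=-\Id$, one deduces $\psims(\J X,Y,Z)=\psis(X,Y,Z)$. Since $\psis$ and $\psims$ are $3$-forms, the cyclic sum of such a form in its three slots equals three times the form itself, so the $\lambda$- and $\mu$-terms contribute $6\lambda\psis(X,Y,Z)$ and $6\mu\psims(X,Y,Z)$ respectively. The $S_{34}$-term is already in the shape required by the lemma.

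The crux is the vanishing
$$\cyclic{XYZ}\psims(S_2 X,Y,Z)=0.$$
I would package the two real forms into the complex $3$-form $\Omega:=\psis+i\psims$, which is of type $(3,0)$ with respect to $\J$; equivalently, $\Omega(\J X,Y,Z)=i\Omega(X,Y,Z)$, an immediate consequence of the identities above. Set
$$\tilde T(X,Y,Z):=\cyclic{XYZ}\Omega(S_2 X,Y,Z).$$
A direct verification shows $\tilde T$ is totally antisymmetric. Using $S_2\J=\J S_2$ together with cyclic invariance of $\Omega$ and its $(3,0)$-property, one finds $\tilde T(\J X,Y,Z)=i\tilde T(X,Y,Z)$, so $\tilde T\in\Lambda^{3,0}$. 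Since $\dim_{\C}\Lambda^{3,0}=1$, we have $\tilde T=c\,\Omega$ for some $c\in\C$. Evaluating on a unitary frame $\epsilon_1,\epsilon_2,\epsilon_3$ diagonalising $S_2$ (which exists because $S_2$ is $g$-symmetric and $\C$-linear, hence Hermitian in the complex sense) gives $c=\tfrac12\bigl(\tr S_2-i\tr(\J S_2)\bigr)$, and this vanishes by Notation~\ref{eq:decompS}. Taking imaginary parts of $\tilde T=0$ delivers the claim.

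The only non-routine step is this last cancellation; an alternative is to diagonalise $S_2$ in a $\J$-adapted real frame and verify the vanishing entry by entry against the explicit shape of $\psims$ read off from the example on page~\pageref{SU3basis}. The $(3,0)$-form argument above is cleaner and makes structurally transparent that it is precisely the trace-freeness of $S_2$, i.e.\ membership in the class $\WS_{\bar2}$, that kills its $d\omega$-contribution.
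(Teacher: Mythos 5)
Your proposal is correct in substance and follows the same skeleton as the paper's proof: both start from $d\omega(X,Y,Z)=\cyclic{XYZ}(\nabla_X\omega)(Y,Z)$, feed in Lemma \ref{lem:nablaompsip}, and split $S$ according to Notation \ref{eq:decompS}, with the $\lambda$-, $\mu$- and $S_{34}$-terms handled identically. The difference lies entirely in the key vanishing $\cyclic{XYZ}\psims(S_2X,Y,Z)=0$: the paper simply cites the classification fact that $d\omega=0$ in class $\WS_{2\bar25}$, whereas you prove it from scratch via the $(3,0)$-form $\Omega=\psis+i\psims$. Your route is longer but self-contained, and it isolates cleanly why the result holds: $\cyclic{XYZ}\Omega(AX,Y,Z)=\tr_{\C}(A)\,\Omega(X,Y,Z)$ for any $\J$-commuting $A$ (the polarisation of the determinant on the one-dimensional space $\Lambda^{3,0}$), and $\tr_{\C}S_2=\tfrac12\bigl(\tr S_2-i\tr(\J S_2)\bigr)=0$.

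One slip to repair: $S_2$ is \emph{not} $g$-symmetric in general. By Lemma \ref{lem:classsu3} and Notation \ref{eq:decompS} it is the sum of the $\WS_2$-part, which lies in $\su(3)$ and is skew, and the $\WS_{\bar2}$-part, which is symmetric; all that is guaranteed is $S_2\J=\J S_2$ and $\tr S_2=\tr(\J S_2)=0$. So your parenthetical justification for diagonalising $S_2$ in a unitary frame fails. Fortunately the diagonalisation is unnecessary: since $\Omega$ is a top-degree form on $T^{1,0}$, evaluating $\tilde T$ on \emph{any} unitary frame gives $\tilde T(\epsilon_1,\epsilon_2,\epsilon_3)=(A_{11}+A_{22}+A_{33})\,\Omega(\epsilon_1,\epsilon_2,\epsilon_3)=\tr_{\C}(S_2)\,\Omega(\epsilon_1,\epsilon_2,\epsilon_3)$ directly from multilinearity and antisymmetry, for an arbitrary $\C$-linear $S_2$. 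With that replacement the argument is complete.
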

\begin{proof}
We have $d\omega(X,Y,Z)=\cyclic{XYZ}(\nabla_X\omega)(Y,Z)$, and  the fact that 
$\cyclic{XYZ}\psims(S_2(X),Y,Z)$ vanishes corresponds to $d\omega=0$ 
in $\WS_{2\bar25}$.
\end{proof}
To attain additional equations in terms of $\phis$, thus completing the picture, 
we need one last technicality.
\begin{lem}\label{A1} 
The intrinsic tensors $(S,\eta)$ of a Riemannian spin manifold $(M^6,g,\phis)$ satisfy  
the following properties:
$$\ba{rcl}
 S, \J \mbox{ commute}  &\Longleftrightarrow& (\J Y\nabla_X\phis,\phis)
=-(Y\nabla_{\J X}\phis,\phis),\\
 S, \J \mbox{ anti-commute}  &\Longleftrightarrow& (\J Y\nabla_X\phis,\phis)
=(Y\nabla_{\J X}\phis,\phis),\\
 S \mbox{ is symmetric } &\Longleftrightarrow& (X\nabla_Y\phis,\phis)
=(Y\nabla_X\phis,\phis),\\
 S \mbox{ is skew-symmetric }  &\Longleftrightarrow&  (X\nabla_Y\phis,\phis)
=-(Y\nabla_X\phis,\phis).
\ea$$
\end{lem}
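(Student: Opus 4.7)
The plan is to expand the four bilinear pairings appearing on the right-hand side of the equivalences by plugging in the defining equation
$\nabla_X \phis = \eta(X)\jphis + S(X)\phis$
and then translating the result into metric identities involving $S$.

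First I would record two algebraic facts that will do all the heavy lifting. Fact (a): for any $V,W\in TM^6$ the identity $(V\phis,W\phis)=g(V,W)$ holds; this follows from the skew-adjointness of Clifford multiplication by vectors together with $VW=-g(V,W)+\tfrac12(VW-WV)$ and the vanishing of $(\phis,B\phis)$ for any bivector $B$ (bivectors act skew-adjointly on spinors and $\phis$ has unit length). Fact (b): for every $Y\in TM^6$ one has $(Y\jphis,\phis)=0$, since $Y\jphis=-\J Y\cdot\phis$ lies in the summand $\{X\phis:X\in\R^6\}$ of the decomposition (\ref{eq:splittingsu3}), which is orthogonal to $\phis$.

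Using (a) and (b), the computation splits into two lines. On the one hand
$$
(Y\nabla_X\phis,\phis)=\eta(X)(Y\jphis,\phis)+(YS(X)\phis,\phis)=-g(S(X),Y),
$$
since the $\eta$-term dies by (b) and $(YS(X)\phis,\phis)=-(S(X)\phis,Y\phis)=-g(S(X),Y)$ by (a). On the other hand the same recipe, now with $\J Y$ in place of $Y$, gives
$$
(\J Y\nabla_X\phis,\phis)=-g(S(X),\J Y)=g(\J S(X),Y),
$$
where the last equality uses $\J^T=-\J$. Swapping the roles of $X,Y$ (respectively $X,\J X$) in these formulas produces the four companion identities needed.

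Finally I assemble the equivalences. For the symmetry statements, $(Y\nabla_X\phis,\phis)\pm(X\nabla_Y\phis,\phis)=-g(S(X),Y)\mp g(S(Y),X)$, which vanishes identically in $X,Y$ exactly when $S^T=\pm S$. For the $\J$-statements, $(\J Y\nabla_X\phis,\phis)\pm(Y\nabla_{\J X}\phis,\phis)=g(\J S(X),Y)\mp g(S(\J X),Y)$, vanishing iff $\J S\mp S\J=0$. I do not anticipate a genuine obstacle here; the only care needed is bookkeeping of signs in (a), (b), and in the skew-adjointness of $\J$, so that the four cases line up with the symbols in the statement.
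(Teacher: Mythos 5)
Your proposal is correct and follows essentially the same route as the paper: substitute the defining equation $\nabla_X\phis=\eta(X)\jphis+S(X)\phis$ into each pairing, kill the $\eta$-term by the orthogonality of $\jphis$ (equivalently $Y\jphis$) to the relevant spinor, and reduce everything to identities of the form $g(S(X),Y)$, $g(\J S(X),Y)$. The only difference is cosmetic — the paper pairs $\nabla_X\phis$ directly against $Y\phis$ while you move the vector across by skew-adjointness — and your sign bookkeeping in all four cases is consistent with the statement.
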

\begin{proof}
As $(\J S(X)\phis,Y\phis)=(S\J(X)\phis,Y\phis)$ if and only if
$(\J Y\nabla_X\phis,\phis)=-(Y\nabla_{\J(X)}\phis,\phis)$,
the first two equivalences are clear. 

Since both $\phis,\jphis$ are orthogonal to $Y\phis$, for any $Y\in TM^6$, we obtain
\bdm
g(S(X),Y)=(\nabla_X\phis,Y\phis) \  \mbox{ and } \  g(X,S(Y))=(\nabla_Y\phis,X\phis)
\edm
and hence the remaining formulas.
\end{proof}
\begin{thm}\label{thm:class}
The classification of $\SU(3)$-structures in terms of the defining 
spinor $\phis$ is contained 
in Table \ref{table.SU3}, where 
$$\eta(X):=(\nabla_X\phis,\jphis)$$ 
and $\lambda=\frac{1}{6}(D\phis,\jphis),\; \mu=-\frac{1}{6}(D\phis,\phis)$ (as of Theorem $\ref{thm:Dphisu3}$).

\begin{table}
\caption{Correspondence of $\SU(3)$-structures and spinorial field equations
(see Theorem  \ref{thm:class}).}\label{table.SU3}
\begin{tabular}{|c|l|}
\hline
class & spinorial equations\\
\hline
 \hline
$\WS_1$& $\nabla_X\phis=\lambda X\jphis$ for $\lambda\in\R$\\
\hline
$\WS_{\bar1}$& $\nabla_X\phis=\mu X\phis$ \ \ \ for   $\mu\in\R$\\
\hline
$\WS_2$& $(\J Y\nabla_X\phis,\phis)=-(Y\nabla_{\J X}\phis,\phis)$, \quad
$(Y\nabla_X\phis,\jphis)=+(X\nabla_Y\phis,\jphis)$, $\ \lambda=\eta=0$\\
\hline
$\WS_{\bar2}$& $(\J Y\nabla_X\phis,\phis)=-(Y\nabla_{\J X}\phis,\phis)$, \quad
$(Y\nabla_X\phis,\jphis)=-(X\nabla_Y\phis,\jphis)$, $\ \mu=\eta=0$\\
\hline
$\WS_{3}$& $(\J Y\nabla_X\phis,\phis)=+(Y\nabla_{\J X}\phis,\phis)$, \quad
$(Y\nabla_X\phis,\jphis)=+(X\nabla_Y\phis,\jphis)$, \  $\eta=0$  \\
\hline
$\WS_{4}$& $(\J Y\nabla_X\phis,\phis)=+(Y\nabla_{\J X}\phis,\phis)$, \quad
$(Y\nabla_X\phis,\jphis)=-(X\nabla_Y\phis,\jphis)$, \ $\eta=0$ \\
\hline
$\WS_5$& $\nabla_X\phis=(\nabla_X\phis,\jphis)\jphis$\\
\hline\hline
$\WS_{1\bar1}$& $\nabla_X\phis=\lambda X\jphis+\mu X\phis$\\
\hline
$\WS_{2\bar2}$& $(\J Y\nabla_X\phis,\phis)=-(Y\nabla_{\J X}\phis,\phis)$, 
$\lambda=\mu=0$ and $\eta=0$ 
\\
\hline
$\WS_{2\bar25}$& $(\J Y\nabla_X\phis,\phis)=-(Y\nabla_{\J X}\phis,\phis)$ and 
$\lambda=\mu=0$\\
\hline
$\WS_{1\bar12\bar2}$& $(\J Y\nabla_X\phis,\phis)=-(Y\nabla_{\J X}\phis,\phis)$ and 
$\eta=0$\\
\hline
$\WS_{1\bar12\bar25}$& $(\J Y\nabla_X\phis,\phis)=-(Y\nabla_{\J X}\phis,\phis)$\\
\hline
$\WS_{2\bar23}$& $D\phis=0$ and $\eta=0$\\
\hline
$\WS_{1\bar12\bar23}$& $(D\phis,X\phis)=0$ and $\eta=0$\\
\hline
$\WS_{1\bar12\bar234}$& $(\nabla_X\phis,\jphis)=0$\\
\hline
$\WS_{2\bar235}$& $(D\phis,X\jphis)=\eta(X)$ and $\lambda=\mu=0$ \\
\hline
$\WS_{1\bar12\bar235}$& $(D\phis,X\jphis)=\eta(X)$\\
\hline
$\WS_{34}$& $(\J Y\nabla_X\phis,\phis)=(Y\nabla_{\J X}\phis,\phis)$ and $\eta=0$\\
\hline
$\WS_{345}$& $(\J Y\nabla_X\phis,\phis)=(Y\nabla_{\J X}\phis,\phis)$\\
\hline
$\WS_{2\bar2345}$& $\lambda=\mu=0$\\
\hline
$\WS_{\bar1\bar23}$& $(X\nabla_Y\phis,\phis)=(Y\nabla_X\phis,\phis)$ and $\eta=0$\\
\hline
\end{tabular}
\end{table}
\end{thm}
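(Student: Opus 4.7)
The plan is to verify the rows of Table \ref{table.SU3} by translating the algebraic characterisations of the Gray--Hervella classes supplied by Lemma \ref{lem:classsu3} into spinorial equations, via the defining identity $\nabla_X\phis=\eta(X)\jphis+S(X)\phis$. The principal bridges are Lemma \ref{A1}, which converts (anti-)commutation with $\J$ and (skew-)symmetry of $S$ into bilinear conditions on $\nabla\phis$, and Theorem \ref{thm:Dphisu3}, which identifies the scalar components $\lambda,\mu$ of $S$ with components of $D\phis$.

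First I would dispatch the pure classes $\WS_1,\WS_{\bar1},\WS_5$ by direct substitution. For $\WS_1$, inserting $S=\lambda\J$ and $\eta=0$ in the defining equation and using the relation between $\J(X)\phis$ and $X\jphis$ (which follows from the definition $\J(X)\cdot\phi:=\js(X\cdot\phi)$ together with Lemma \ref{lem:algrechnungendim6}, absorbing any sign into the parameter $\lambda$) produces $\nabla_X\phis=\lambda X\jphis$. The class $\WS_{\bar1}$ is the classical Riemannian Killing equation, and for $\WS_5$ one reads off $\eta(X)=(\nabla_X\phis,\jphis)$ from Lemma \ref{lem:nablaompsip}. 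The four intermediate classes $\WS_2,\WS_{\bar2},\WS_3,\WS_4$ each force $\eta=0$ together with a prescribed combination of $[S,\J]=0$ or $\{S,\J\}=0$ with symmetry or skew-symmetry of $S$; Lemma \ref{A1} delivers exactly the four equivalences required to rephrase these four conditions as the bilinear identities listed in the table. The tracelessness constraints that distinguish $\WS_2$ from $\WS_1$ and $\WS_{\bar2}$ from $\WS_{\bar1}$ are enforced by the scalar equations $\lambda=0$ respectively $\mu=0$, which by Theorem \ref{thm:Dphisu3} read $(D\phis,\jphis)=0$ and $(D\phis,\phis)=0$.

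The composite rows in the second block of the table then arise by intersecting individual conditions, using the orthogonal decomposition $S=\lambda\J+\mu\Id+S_2+S_{34}$ of Notation \ref{eq:decompS}: each summand is singled out by one of the algebraic criteria above, so any union $\WS_I$ is characterised by making the complementary summands (and, if $\WS_5\notin I$, the form $\eta$) vanish. For the rows involving $D\phis$, namely $\WS_{2\bar23}$, $\WS_{1\bar12\bar23}$, $\WS_{2\bar235}$, $\WS_{1\bar12\bar235}$ and $\WS_{2\bar2345}$, I would appeal directly to Theorem \ref{thm:Dphisu3} and Lemma \ref{lem:omsu3}, recalling that the tangential part of $D\phis$ encodes $\tfrac12\delta\omega+\eta$; in particular $(D\phis,X\jphis)=\eta(X)$ for all $X$ is equivalent to $\delta\omega=0$, i.e.\ to $\WS_4=0$. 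The row $\WS_{1\bar12\bar234}$ collapses to $\eta=0$, which is manifestly the same as $(\nabla_X\phis,\jphis)=0$, and $\WS_{\bar1\bar23}$ is the generalised Killing case already mentioned after Lemma \ref{lem:classsu3}.

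The main obstacle is bookkeeping rather than conceptual: for every entry one must verify both implications, namely that the algebraic conditions of Lemma \ref{lem:classsu3} imply the proposed spinorial equation, and conversely that the spinorial equation forces $S,\eta$ to lie in the stated submodule. The converse direction rests on the observation---used implicitly in the discussion preceding Theorem \ref{thm:Dphisu3}---that the Clifford multiplication $\cliffmult:T^*M^6\otimes\phis^{\perp}\to\Sigmas$ has kernel precisely $\WS_{2\bar23}$, so that the pair $(\nabla\phis,D\phis)$ reconstructs $(S,\eta)$ up to an ambiguity which is exactly captured by the rows of the table that contain $D\phis$. Once this is in place, each remaining entry amounts to a routine verification.
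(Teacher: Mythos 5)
Your strategy is the same as the paper's: translate the module conditions of Lemma \ref{lem:classsu3} into bilinear identities for $\nabla\phis$ via Lemma \ref{A1}, and handle the Dirac-operator rows through Theorem \ref{thm:Dphisu3} and Lemma \ref{lem:omsu3}; the combinatorial part of your argument matches what the paper does.

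There is, however, one genuine gap. The rows $\WS_1$, $\WS_{\bar1}$ and $\WS_{1\bar1}$ of the table assert that the coefficients satisfy $\lambda,\mu\in\R$, i.e.\ that they are \emph{constants}, whereas a priori $\lambda=\frac16(D\phis,\jphis)$ and $\mu=-\frac16(D\phis,\phis)$ are functions on $M^6$, and Lemma \ref{lem:classsu3} only says that $S$ is pointwise a multiple of $\J$ (resp.\ of $\Id$). Your proposal obtains $\nabla_X\phis=\lambda X\jphis$ by direct substitution and treats $\WS_{\bar1}$ as ``the classical Riemannian Killing equation'', which silently presupposes the constancy that has to be proved. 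The paper's proof opens with exactly this point: in class $\WS_1$ one checks that $\nabla_X(\phis+\jphis)=-\lambda X(\phis+\jphis)$, so $\phis+\jphis$ is a Killing spinor by Grunewald's identification of nearly K\"ahler structures with Killing spinors and $\lambda$ must be constant; in class $\WS_{\bar1}$ the spinors $\phis$ and $\jphis$ are themselves Killing spinors with constants $\mu$ and $-\mu$. (Equivalently, one can run the standard integrability argument showing that any solution of $\nabla_X\psi=fX\psi$ on a connected manifold of dimension at least two forces $df=0$.) Without this step the first two rows, and hence $\WS_{1\bar1}$, are only established with $\lambda,\mu$ functions. A secondary, purely expository point: your closing remark that the pair $(\nabla\phis,D\phis)$ reconstructs $(S,\eta)$ ``up to an ambiguity'' is misleading --- $\nabla\phis$ alone determines $(S,\eta)$ uniquely by the decomposition (\ref{eq:splittingsu3}); the kernel $\WS_{2\bar23}$ of the Clifford multiplication only limits which classes can be characterised by conditions on $D\phis$ \emph{alone}.
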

\begin{proof}
We first prove that $\lambda$ and $\mu$ in $\WS_1$ and $\WS_{\bar1}$ are 
constant. In 
$\WS_1$ we have $S=\lambda \J$ and thus 
$\nabla_X(\phis+\jphis)=-\lambda X(\phis+\jphis)$. 
Since a nearly K\"ahler structure (type $\WS_{1\bar15}$) is given by a Killing 
spinor
 \cite{Grunewald90}, the function $\lambda$ must be constant. In the 
 case $\WS_{\bar1}$ the 
spinors $\phis$ and $\jphis$ themselves are Killing spinors with Killing constants 
$\mu$, $-\mu$. \vspace{0.2cm}\\
We combine the results of Lemma \ref{A1} as follows.
By Lemma \ref{lem:classsu3}, a structure is of type $\WS_2$ if $S$ is skew-symmetric, 
it commutes with $\J$, 
and the trace of $\J S$ and $\eta$ vanish. 
The first statement of Lemma \ref{A1} gives us the condition for
$S$ and  $\J$ to commute, and the last states that skew-symmetry is equivalent
to  
$(X\nabla_Y\phis,\phis)=-(Y\nabla_X\phis,\phis)$, under which condition 
the equation $(\J Y\nabla_X\phis,\phis)=-(Y\nabla_{\J X}\phis,\phis)$ 
is equivalent to 
\bdm
(Y\nabla_X\phis,\jphis)=
(X\nabla_Y\phis,\jphis).
\edm
The other classes can be calculated similarly, making extensive use of 
Lemmas \ref{lem:omsu3}, \ref{A1}.
\end{proof}
It makes little sense to compute all possible combinations (in principle, $2^7$), 
so we listed only those of some interest. Others can be inferred by 
arguments of the following sort. 
Suppose we want to show that class $\WS_{124}$ has 
$(X\nabla_Y\phis,\phis)=-(Y\nabla_X\phis,\phis)$ and $\eta=0$ as 
defining equations. 
>From Lemma \ref{lem:classsu3} we know $\WS_{124}$ is governed by the 
skew-symmetry of $S$, and at the same time $\eta$ controls $\WS_5$, whence 
the claim is straighforward. 
Another example: assume we want to show that 
\bdm
3\lambda\psis(X,Y,Z)+3\mu\psims(X,Y,Z) +
\cyclic{XYZ}(YZ\nabla_X\phis,\jphis)+\cyclic{XYZ}\eta(X)g(Y,Z)
\ =\ 0
\edm
is an alternative description of  $\WS_{1\bar12\bar25}$. 
>From Lemma \ref{lem:dom} we know that 
$d\omega=6\lambda\psis+6\mu\psims$ defines that class, so we conclude by 
using $d\omega(X,Y,Z)=\cyclic{XYZ}(\nabla_X\omega)(Y,Z)$ and the first equality  
in the proof of Lemma \ref{lem:omsu3}.

%
%
%
\begin{NBs}\label{NB:W1}~\\
(i) The proof above shows that the real Killing spinors of an $\SU(3)$-structure 
of  class $\WS_{1\bar1}$ (with Killing constants $\pm|\lambda|$) necessarily 
have the form
$\phis\pm\jphis$ in case $\WS_1$, and
$\phis,\, \jphis$   in case  $\WS_{\bar1}$.
%
Now notice that a rotation of $\phis$ to $\phis\cos\alpha+\jphis\sin\alpha$, for some 
function $\alpha$, affects  
the intrinsic tensors as follows:
\bdm
S \rightsquigarrow S\cos(2\alpha)+\J\circ S\sin(2\alpha),\qquad 
\eta\rightsquigarrow \eta+ d\alpha
\edm 
The $\WS_5$ component varies, and $\WS_i^\pm, i=1,2$ change, too \cite{CS02}.
\smallbreak

In class $\WS_{\bar123}$ we have the constraint $D\phis=f\phis$, 
so $\phis$ is an eigenspinor with eigenfunction $f$. (One can alter  
$\phis$ so to have it in $\WS_{1\bar12\bar23}$.)
Therefore, if we are after a Killing spinor (class $\WS_{1\bar1}$), the 
eigenfunction 
$f$ necessarily determines the fifth component $\eta=-d\alpha$. 
In Section \ref{sec:spincones} we will treat cases where $f=h$ is a constant map.

\smallbreak

(ii) It is fairly evident (cf. \cite{CS02}) that the effect of modifying  
$S\rightsquigarrow JS$ is to exchange  
$\WS_j^+$ and $\WS_j^-, \ j=1,2$, whilst the other components 
remain untouched. As such it corresponds to a $\pi/2$-rotation in the fibres of the 
natural circle bundle $\RP^7\lto \CP^3$. 
\end{NBs}
\bigbreak
\begin{exa}
The twistor spaces $M^6=\C\P^3, \ \U(3)/\U(1)^3$ of the 
self-dual Einstein manifolds $S^4$ and $\C\P^2$ are very interesting from
the spinorial point of view. As is well known, both carry a one-parameter 
family of metrics $g_t$ compatible with two almost complex structures
$\Omega^\mathrm{K}, \Omega^\mathrm{nK}$, in such a way that 
in a suitable, but pretty standard normalisation 
$(M^6,g_{1/2},\Omega^\mathrm{nK})$
is nearly K\"ahler and $(M^6,g_{1},\Omega^\mathrm{K})$ is K\"ahler \cite{ES85}.
The two  almost complex structures differ by an orientation flip on the 
two-dimensional fibres. Here is a short and uniform description
of both instances. 
We choose the 
spin representation used in \cite[Sect. 5.4]{BFGK91}, 
whereby the Riemannian scalar curvature of $g_t$ is 
\bdm
\Scal_t\ =\ 2c (6-t+1/t) 
\edm
where $c$ is a constant (equal to $1$ for $\C\P^3$ and $c=2$ for 
$\U(3)/\U(1)^3$, due to an irrelevant yet nasty factor of $2$ 
in standard normalisations).
Using an appropriate orthonormal frame the orthogonal almost 
complex structures read 
\bdm
\Omega^\mathrm{K}\ =\ e_{12} - e_{34}- e_{56},\quad
\Omega^\mathrm{nK}\ =\ e_{12}- e_{34}+ e_{56}.
\edm
There exist two linearly independent and isotropy-invariant
 real spinors $\phi_\eps$  in $\Delta\ (\eps=\pm 1)$, which define
global spinor fields on the two spaces. 
On can prove directly that the $\phi_\eps$ induce the same $J_\phi$, 
corresponding to $\Omega^\mathrm{nK}$, and also the $3$-forms
\bdm
\psi_\eps \ :=\ \psi_{\phi_\eps}\ =\ \eps (e_{135}+e_{146}-e_{236} + e_{245})\ =:\ \eps \Psi.
\edm
When $t=1/2$, $\phi_\eps$ are known to be Killing spinors. 
For a generic $t\not =0$ let us define the
symmetric endomorphisms $S_\eps:\ TM^6\ra TM^6$
\bdm
S_\eps\ =\ \eps\, \sqrt{c}\cdot \diag \bigg(\frac{\sqrt{t}}{2},\frac{\sqrt{t}}{2},
\frac{\sqrt{t}}{2},\frac{\sqrt{t}}{2},
\frac{1-t}{2\sqrt{t}}, \frac{1-t}{2\sqrt{t}} \bigg).
\edm
An explicit calculation shows that  $\phi_\eps$ solve
\bdm
\nabla_X\phi_\eps\ =\  S_\eps(X) \, \phi_\eps,
\edm
making them generalised Killing spinors.
In particular, $S_\eps$ are the intrinsic endomorphisms and 
$\eta=0$; observe that $S_\eps$ commute with 
$\Omega^\mathrm{nK}$ due to their block structure. 
By Lemma \ref{lem:classsu3} the
$\SU(3)$-structure defined by $\phi_\eps$ is 
therefore of class $\chi_{\bar{1}\bar{2}}$ for $t\neq 1/2$, and reduces to 
class $\chi_{\bar{1}}$ when $t=1/2$. \smallbreak

The spinors $\phi_\eps$ are eigenspinors of
the Riemannian Dirac operator $D$ with eigenvalues
$6\mu = \tr S_\eps = \eps\sqrt{c}\,\frac{t+1}{\sqrt t}$; 
they coincide, as they should, with the limiting values of
Friedrich's general estimate \cite{Fr80} when $t=1/2$,   and 
Kirchberg's estimate for K\"ahler manifolds \cite{Kirchberg86} for $t=1$.
\smallbreak

A further  routine calculation shows that 
\bdm
\nabla_{e_i} \Omega^\mathrm{nK} \, =\, 
\begin{cases}
-\sqrt{ct}Je_i\lrcorner \Psi &\quad 1\leq i\leq 4 \\[3mm]
-\dfrac{\sqrt{c}\,(1-t)}{\sqrt{t}}Je_i \lrcorner \Psi &\quad i=5,6.
\end{cases}
\edm
%
Hence, we conclude that $\nabla_X \Omega^\mathrm{nK} = -2JS_\eps(X)\lrcorner\psi_\eps $
holds, as it should by Lemma \ref{lem:nablaompsip}.\smallbreak

Let us finish with a comment on the K\"ahler structures ($t=1$). 
Kirchberg's equality is attained in odd complex 
dimensions by a pair of so-called K\"ahlerian Killing spinors, 
 basically $\phi_1, \phi_{-1}$ \cite{Kirchberg88}. 
These, however, do \emph{not} induce $\Omega^K$, rather the `wrong' almost
complex structure $\Omega^\mathrm{nK}$. This means two things: first, the K\"ahler structure cannot be recovered from the two K\"ahlerian Killing spinors; secondly, it 
 reflects the fact that the Killing spinors do not define a `compatible'
$\SU(3)$-structure. For the projective space this stems from our description of 
 $\CP^3$ as $SO(5)/U(2)$, on which there is no invariant spinor 
inducing $\Omega^K$. In the other case the reason is 
that every almost Hermitian structure on the flag manifold 
is $\SU(3)$-invariant \cite{AGI98}.
\end{exa}
%
%
\subsection{Adapted connections}
Let $(M^6,g,\phis)$ be an $\SU(3)$-manifold with Levi-Civita connection $\nabla$.
As we are interested in non-integrable structures, $\nabla\phis\neq0$, we 
look for a metric connection that preserves the $\SU(3)$-structure. 
The \textit{canonical connection} defined in \eqref{nabla^n} is one such instance.\smallbreak

The space of metric connections is isomorphic to the space of $(2,1)$-tensors 
$\mathcal{A}^g:=TM^6\otimes\Lambda^2(TM^6)$ by 
$\tilde\nabla_XY=\nabla_XY+A(X,Y)$. Define the map
\bdm
\Xi:TM^6\oplus \End(TM^6)\ra\mathcal{A}^g, ~(\eta,S)\mapsto 
-S\lrcorner\psis+\frac{2}{3}\eta\otimes\omega
\edm
where $S,\eta$ are the  intrinsic tensors of the $\SU(3)$-structure on $M^6$. 
Then $\nabla^n_XY:=\nabla_XY+\Xi(\eta,S)$ 
is a metric connection on $M^6$, and we get
\bea[*]
\nabla^n_X\phis&=&\nabla_X\phis-\psis\tfrac{1}{2}(S(X),.,.)\cdot\phis
+\tfrac{1}{3}\eta(X)\omega\cdot\phis\\
&=&S(X)\cdot\phis+ \eta(X)\jphis-S(X)\cdot\phis - \eta(X)\jphis=0
\eea[*]
by Lemma \ref{lem:algrechnungendim6}, showing that $\nabla^n$ is an $\SU(3)$-connection. The space $\mathcal A^g$ splits under 
the representation of $\SO(n)$ (see \cite[p.51]{Ca25} and \cite{AF04})
into 
\bdm
\mathcal{A}^g=TM^6\oplus\Lambda^3(TM^6)\oplus\mathcal{T},
\edm
whose summands are referred to as \textit{vectorial}, \textit{skew-symmetric} 
and \textit{cyclic traceless} connections.
A computer algebra system calculates the map $\Xi$ at 
one point and gives
\begin{lem}\label{lem:connsu3}
The `pure' classes of an $\SU(3)$-manifold $M^6$ correspond to
$\nabla^n$ in:
\medskip
\begin{center}
\begin{tabular}{c|c|c|c|c|c}
class of $M^6$ &$\WS_{1\bar1}$&$\WS_{2\bar2}$&$\WS_3$&$\WS_4$&$\WS_5$\\
\hline
type of $\nabla^n$ & $\Lambda^3$ & $\mathcal{T}$ &
$\Lambda^3\oplus\mathcal{T}$ & $TM^6\oplus\Lambda^3\oplus\mathcal{T}$
&$TM^6\oplus\Lambda^3\oplus\mathcal{T}$\\
\end{tabular}
\end{center}
\medskip
\end{lem}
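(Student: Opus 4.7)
The plan is to treat the lemma as an $\SO(6)$-representation-theoretic statement, evaluated pointwise on $M^6$. Recall that $\mathcal{A}^g = T^*M^6\otimes\Lambda^2 T^*M^6$ decomposes irreducibly as $TM^6\op\Lambda^3\op\mathcal{T}$, and that the natural projectors onto the first two summands are the cyclic symmetrisation $\mathrm{Alt}\,A(X,Y,Z) = \tfrac{1}{3}\cyclic{XYZ} A(X,Y,Z)$ and the partial trace $A_V(Z) = \sum_i A(e_i,e_i,Z)$ respectively, the cyclic-traceless remainder accounting for $\mathcal{T}$. My plan is to plug the normal form of $(S,\eta)$ prescribed by Lemma \ref{lem:classsu3} into $\Xi(\eta,S) = -S\hook\psis+\tfrac{2}{3}\eta\ox\omega$ and evaluate both projectors in each pure class.

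For $\WS_{1\bar{1}}$ Lemma \ref{lem:algrechnungendim6} together with the identity $\psis(\J X,Y,Z) = -\psims(X,Y,Z)$ gives $\Xi=\lambda\psims-\mu\psis$, which is already totally skew, so only $\Lambda^3$ is hit. For $\WS_{2\bar{2}}$ I expect both projectors to vanish: the partial trace because $S$, viewed as a 2-tensor, is either symmetric (in $\WS_{\bar 2}$) -- killing the trace against the skew $\psis$ -- or a 2-form of type $(1,1)$ (in $\WS_2$), type-orthogonal to $\psis\in\Lambda^{3,0}\op\Lambda^{0,3}$; and the cyclic sum because $S$ has vanishing complex trace and hence acts trivially on the $(3,0)+(0,3)$-form $\psis$ as a derivation. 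So only $\mathcal{T}$ contributes. For $\WS_3$ the symmetry of $S$ still kills the trace, but $\J$-anti-commutation now makes $S$ act non-trivially on $\psis$, giving $\Lambda^3\op\mathcal{T}$. For $\WS_4$ neither projector vanishes, because $S$ is now a skew 2-form of type $(2,0)+(0,2)$ whose contraction with $\psis$ is a non-zero $1$-form. Finally for $\WS_5$, where $\Xi=\tfrac{2}{3}\eta\ox\omega$, direct computation yields $\mathrm{Alt}\,\Xi=\tfrac{2}{9}\eta\w\omega\neq 0$ and $\Xi_V=\tfrac{2}{3}\eta\circ\J\neq 0$, while the cyclic-traceless remainder is also non-zero, so all three summands appear.

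The genuinely delicate point will be checking that the cyclic-traceless remainder does not vanish in $\WS_4$ and $\WS_5$, and conversely that it does vanish for $\WS_{1\bar 1}$; everything else above is either a direct projection formula or a type argument. Since each $\WS_i$ is $\SU(3)$-irreducible, it suffices to evaluate the three projections on a single non-trivial representative per class -- for instance $S=\diag(a,a,-a,-a,0,0)$ for $\WS_{\bar 2}$, a suitable $\J$-anti-commuting matrix for $\WS_4$, and $\eta = e_1$ for $\WS_5$ -- and the cleanest route is to do this in the explicit spin basis of page \pageref{SU3basis}, whence the authors' remark about delegating the final bookkeeping to a computer algebra system.
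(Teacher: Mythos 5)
Your proposal is correct. It is worth noting that the paper gives no written argument for this lemma beyond the sentence that ``a computer algebra system calculates the map $\Xi$ at one point and gives'' the table, so your representation-theoretic derivation is not so much a different route as the missing content, made explicit: you identify the two natural projectors (cyclic symmetrisation onto $\Lambda^3$, partial trace onto the vectorial summand $TM^6$), observe that $\Xi=\lambda\psims-\mu\psis$ on $\WS_{1\bar1}$ so that only $\Lambda^3$ survives, kill the trace on $\WS_{\bar2}$ and $\WS_3$ by the symmetry of $S$ against the skew $\psis$ and on $\WS_2$ by the type-orthogonality of $(1,1)$-forms to the contractions $Z\hook\psis$ of type $(2,0)+(0,2)$, and kill the cyclic sum on $\WS_{2\bar2}$ because a $\J$-commuting, complex-trace-free endomorphism annihilates $\psis$ as a derivation --- consistent with the remark in the proof of Lemma \ref{lem:dom}; the computation $\Xi=\tfrac{2}{3}\eta\otimes\omega$ for $\WS_5$, with trace $\tfrac{2}{3}\,\eta\circ\J$ and alternation $\tfrac{2}{9}\,\eta\wedge\omega$, is also right. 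Two minor points: the non-vanishing of the $\mathcal{T}$-component must be verified for $\WS_3$ as well (i.e.\ that $S\hook\psis$ is not already totally skew there), a case absent from your list of ``delicate points'' though covered by your closing strategy; and for $\WS_{2\bar2}$ the non-vanishing of the $\mathcal{T}$-part is automatic once both projections vanish, since $S\hook\psis=0$ forces $S=0$ by non-degeneracy of $\psis$. Since each $\WS_i$ is $\SU(3)$-irreducible, Schur's lemma does reduce every projection to a zero-or-nonzero test on a single representative, so carrying out those finitely many evaluations in the explicit basis closes the argument.
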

The projection to the skew-symmetric part of the torsion given in the previous 
lemma generates the so-called \textit{characteristic connection} $\nabla^c$.
This is a metric connection that preserves the $\SU(3)$-structure and 
additionally has the same geodesics as $\nabla$. 
If an $\SU(3)$-manifold admits such a connection, we know from \cite{FI02} 
that the $\WS_{2\bar2}$ part of the intrinsic torsion vanishes.\smallbreak

We are interested in finding out whether and when an $\SU(3)$-manifold  
$(M^6,g,\phis)$ admits a characteristic connection, that is to say when 
$$\nabla^c\psis=0.$$ 
Any connection doing that must be the (unique)  
characteristic connection of the underlying $\U(3)$-structure, so to begin with 
the $\SU(3)$-class must necessarily be $\WS_{1\bar1345}$. What is more, 
\begin{lem}\label{lem.charconn-spinor}
Given an $\SU(3)$-manifold $(M^6,g,\phis)$, a connection with skew 
torsion $\tilde\nabla$ is characteristic  if and only if it preserves 
the spinor $\phis$.
\end{lem}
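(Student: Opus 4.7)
The two implications rest on the same basic observation: on a spin manifold, a metric connection acts simultaneously on the tangent and spinor bundles, and the fundamental tensors $\omega$, $J$ and $\psis$ are built from $\phis$ via Clifford multiplication and the spinorial inner product.

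For the forward direction, I would assume $\tilde\nabla$ is the characteristic connection. By definition it preserves the $\SU(3)$-structure, hence in particular $\psis$, $\omega$ and $\J$. Since $\textrm{Stab}(\phis)=\SU(3)\subset\Spin(6)$, a $\tilde\nabla$-parallel reduction of the spin frame bundle to $\SU(3)$ exists, and this is exactly the statement $\tilde\nabla\phis=0$.

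For the converse, I would suppose $\tilde\nabla$ has skew torsion and $\tilde\nabla\phis=0$. Since $\tilde\nabla$ is metric, its lift to the spinor bundle commutes with Clifford multiplication; together with parallelism of the volume element (which, as recalled in the proof of Lemma \ref{lem:nablaompsip}, is exactly $\js$), a direct differentiation of
\[
\psis(X,Y,Z)=-(XYZ\phis,\phis),\qquad \omega(X,Y)=-(X\phis,Y\jphis)
\]
yields $\tilde\nabla\psis=0$ and $\tilde\nabla\omega=0$. Thus $\tilde\nabla$ is a metric connection with totally skew torsion that preserves the full $\SU(3)$-structure. I would then invoke the uniqueness theorem of Friedrich--Ivanov \cite{FI02}: if a skew-torsion metric connection preserving a given $G$-structure exists, it is unique. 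Hence $\tilde\nabla$ must coincide with the characteristic connection.

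The only delicate point is to ensure that the spinorial equation $\tilde\nabla\phis=0$ really is equivalent to preservation of the whole $\SU(3)$-structure at the tangent-bundle level; this follows without effort from the standard correspondence between metric connections on $TM^6$ and their lifts to $\Sigma$, combined with the identities of Lemma \ref{lem:algrechnungendim6} which express $\psis$, $\omega$ and $\J$ as Clifford-algebraic functions of $\phis$. No separate calculation is required beyond citing the uniqueness statement of \cite{FI02}.
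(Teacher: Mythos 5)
Your proposal is correct and follows essentially the same route as the paper: the forward direction via $\mathrm{Stab}(\phi)=\SU(3)$, and the converse by noting that $\tilde\nabla\phi=0$ forces $\tilde\nabla\omega=\tilde\nabla\psi_\phi=0$ (since these tensors are built from $\phi$) and then appealing to the uniqueness of the characteristic connection from \cite{FI02}. No gaps.
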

\begin{proof}
Obvious, but just for the record: $\nabla^c$ is an $\SU(3)$-connection, 
and $\SU(3)=\textrm{Stab}(\phis)$ forces $\phis$ to be parallel. 
Conversely, if $\phis$ is $\tilde\nabla$-parallel, the connection must preserve 
any tensor arising in terms of the spinor, like $\omega$ and $\psis$, 
cf. Lemma \ref{lem:nablaompsip}. To conclude, just recall that the 
characteristic connection is unique (\cite{FI02}, \cite{AFH13}).
\end{proof}
To obtain the ultimate necessary \& sufficient condition we need to 
impose an additional constraint on $\WS_4, \WS_5$:
\begin{thm}\label{thm:charconnonSU}
A Riemannian  spin manifold $(M^6,g,\phis)$ admits a characteristic 
connection if and only if it is of class $\WS_{1\bar1345}$ and 
$4\eta=\delta\, \omega$. 
\end{thm}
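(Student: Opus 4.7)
My strategy is to spinorise via Lemma \ref{lem.charconn-spinor}: a characteristic connection $\nabla^c$ exists iff there is a global 3-form $T\in\Omega^3(M^6)$ such that $\nabla^c\phis=0$. Taking the spin lift of the skew-torsion prescription and combining it with \eqref{spineq.SU3} turns this into the pointwise algebraic system
\[
(X\hook T)\cdot\phis\ =\ -4\eta(X)\,\jphis\ -\ 4S(X)\cdot\phis,\qquad\forall X\in TM^6,
\]
to be solved for $T$ at each point; uniqueness of $\nabla^c$ is then automatic by \cite{FI02,AFH13}.

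The key algebraic input is the $\SU(3)$-isotypic decomposition $\Lambda^3\R^6\cong\R\psis\oplus\R\psims\oplus\Lambda^3_{12}\oplus(\omega\wedge\Lambda^1)$ together with the way each summand acts on $\phis$ through $T\mapsto(X\hook T)\cdot\phis$. Lemma \ref{lem:algrechnungendim6} gives $(X\hook\psis)\cdot\phis=2X\cdot\phis$, and since $\psims=-\psis(\J\cdot,\cdot,\cdot)$ the parallel computation produces $(X\hook\psims)\cdot\phis=-2\J X\cdot\phis$. For $\alpha\wedge\omega\in\omega\wedge\Lambda^1$ the identity
\[
\bigl(X\hook(\alpha\wedge\omega)\bigr)\cdot\phis\ =\ \alpha(X)\,\omega\cdot\phis\ -\ \bigl(\alpha\wedge(\J X)^\flat\bigr)\cdot\phis
\]
combines a $\jphis$-term proportional to $\alpha(X)$ (via $\omega\cdot\phis=-3\jphis$) with a vector term of the form $Y\cdot\phis$ encoding a skew endomorphism anti-commuting with $\J$, i.e.\ precisely the $\WS_4$-skew data. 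Finally, the $\Lambda^3_{12}$-summand maps $\SU(3)$-equivariantly onto the traceless symmetric ($\WS_3$) part of $S$, while the $\su(3)$-piece of $X\hook T$ automatically annihilates $\phis$.

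For \emph{necessity}: matching the $\{Y\cdot\phis\}$-component forces $S(X)\cdot\phis$ to lie in the image described above, which by Lemma \ref{lem:classsu3} excludes the $\WS_{2\bar2}$-module; hence $S_2=0$ and the class is $\WS_{1\bar1345}$, recovering the Friedrich--Ivanov obstruction \cite{FI02} in the present $\SU(3)$-setting. Writing $S=\lambda\J+\mu\Id+S_{34}$ as in Notation \ref{eq:decompS}, the $\jphis$-coefficient of $(X\hook T)\cdot\phis$ is supplied only by the $\omega\wedge\Lambda^1$-summand of $T$, while the $\WS_4$-skew part of $S_{34}$ must also be absorbed by that same summand. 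Pinning down the common 1-form parameter from both equations and invoking Lemma \ref{lem:omsu3} to identify the Lee-type data with $\delta\omega$ yields the scalar identity $4\eta=\delta\omega$.

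For \emph{sufficiency}, given the class $\WS_{1\bar1345}$ and $4\eta=\delta\omega$, I would set
\[
T\ =\ -2\mu\,\psis\ +\ 2\lambda\,\psims\ +\ T_{12}\ +\ 2\eta\wedge\omega,
\]
where $T_{12}\in\Lambda^3_{12}$ is the $\SU(3)$-equivariant preimage of the $\WS_3$-part of $S_{34}$. The coefficients $-2\mu$, $2\lambda$ are dictated by the Clifford identities for $\psis,\psims$ above, and the compatibility $4\eta=\delta\omega$ is precisely what permits the single summand $2\eta\wedge\omega$ to produce \emph{simultaneously} the correct $\jphis$-coefficient and the correct $\WS_4$-skew contribution to the vector part. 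The main obstacle lies exactly in this coupling: both the $\jphis$- and vector-components of $(X\hook(\eta\wedge\omega))\cdot\phis$ are controlled by the same 1-form, and only because these two determinations collapse to the single scalar relation $4\eta=\delta\omega$ is the system in $T$ solvable---otherwise one would face an overdetermined vector equation. Once this is unpacked, Lemma \ref{lem.charconn-spinor} certifies that $\nabla+\tfrac12 T$ is the sought characteristic connection.
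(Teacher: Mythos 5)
Your argument is correct in outline but takes a genuinely different route from the paper's. You analyse from scratch the $\SU(3)$-equivariant map $T\mapsto\{X\mapsto(X\hook T)\phis\}$ from $\Lambda^3=\R\psis\oplus\R\psims\oplus\Lambda^3_{12}\oplus(\omega\wedge\Lambda^1)$ into the torsion space $T^*M^6\otimes\phis^\perp$ and read off both obstructions at once: $\Lambda^3$ contains no copy of the adjoint representation, so the $\WS_{2\bar2}$-module is unreachable, and the single $6$-dimensional summand $\omega\wedge\Lambda^1$ must feed the $12$-dimensional target $\WS_4\oplus\WS_5$, whence exactly one diagonal constraint. This Schur-type argument is more self-contained --- it re-derives the Friedrich--Ivanov obstruction rather than quoting it --- and it explains structurally why precisely one linear relation between the $\WS_4$- and $\WS_5$-components must appear. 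The paper goes the other way: it takes the existence and uniqueness of the $\U(3)$-characteristic connection from \cite{FI02} as input (which forces the class $\WS_{1\bar1345}$ before the theorem is even stated) and then shows by a short direct computation that $\nabla^c\phis=0$ reduces to the single scalar condition $(\nabla^c_X\phis,\jphis)=0$, i.e.\ $4\eta(X)=-((X\hook T)\phis,\jphis)=\delta\omega(X)$. The one place where your write-up is thin is the determination of the constant: you assert, but never compute, that the diagonal in $\WS_4\oplus\WS_5$ cut out by the image of $\omega\wedge\Lambda^1$ is exactly $4\eta=\delta\omega$. Making this precise requires evaluating both the $\jphis$-coefficient and the $\m$-component of $(X\hook(\alpha\wedge\omega))\phis$ --- note that the second term $\alpha\wedge(\J X)^\flat$ also contributes to the $\jphis$-coefficient through its $\omega$-component, and that with the paper's convention $\omega(\cdot,\cdot)=g(\cdot,\J\cdot)$ one has $X\hook\omega=-(\J X)^\flat$, which affects the sign in your displayed identity --- and then matching the resulting $\WS_4$-datum against $\delta\omega$ via Lemma \ref{lem:omsu3}. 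Without that explicit evaluation the factor $4$ is unproved; with it, your proof is complete.
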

\begin{proof}
Let $\nabla^c$ be the $\U(3)$-characteristic connection, $T$ its torsion.
We shall determine in which cases $\nabla^c\phis=\nabla^c\jphis=0$. First of all
\bea[*]
0&=&(\nabla^c_X\omega)(Y,Z)=-2(\nabla^c_X\phis,ZY\jphis)-2g(Y,Z)(\nabla^c_X\phis,\jphis).
\eea[*]
Consequently $(\nabla^c_X\phis,ZY\jphis)=0$ if $Y\perp Z$. But as  
$Y\perp Z$ vary, the spinors $YZ\jphis$ span $\phis^\perp$. In conclusion, $\nabla^c$ 
is characteristic for the $\SU(3)$-structure iff 
$(\nabla^c_X\phis,\jphis)=0$. Now choose a local adapted basis 
$e_1,\ldots ,e_6$ with $\J e_i=-e_{i+1}, i=1,3,5$. Using the formula 
\bdm
\nabla^c_X\phis=\nabla_X\phis+\frac{1}{4}(X\hook T)\phis
\edm
and $\omega(X,Y)=-(XY\phis,\jphis)$ we arrive at 
$
4\eta(X)=-(X\hook T\phis,\jphis)=\omega(X\hook T) 
=T(\omega,X)=- 1/2 \tsum T(e_i,\J e_i,X), 
$
and eventually 
$$
4\eta(X)=-\tfrac{1}{2}\tsum_{i=1}^6T(e_i,Je_i,X)=-\tsum_{i=1}^6(\nabla_{e_i}\omega)(e_i,X)
=\delta\omega(X)
$$
because $0=(\nabla^c_X\omega)(Y,Z)=(\nabla_X\omega)(Y,Z)-\frac{1}{2}(T(X,\J Y,Z)+T(X,Y,\J Z))$. 
\end{proof}
The next theorem gives an explicit formula for the torsion of 
$\nabla^c$. It relies on the computation for the Nijenhuis tensor of 
Lemma \ref{lem:Niejn}.\\
Suppose $M^6$ is of class $\WS_{1\bar1345}$, 
and decompose the intrinsic endomorphism into
$$S=\lambda \J+\mu \Id+S_{34},$$ 
as explained in Notation \ref{eq:decompS}.
\begin{thm}
Suppose $(M^6,g,\phis)$ is of class $\WS_{1\bar1345}$. Then the characteristic 
torsion of the induced $\U(3)$-structure reads 
\bdm
T(X,Y,Z)=2\lambda\psims(X,Y,Z)-2\mu\psis(X,Y,Z)-2\cyclic{XYZ}\psis(S_{34}(X),Y,Z).
\edm
If $\eta=\frac{1}{4}\delta\omega$, $T$ is the characteristic torsion 
of the $\SU(3)$-structure as well.
\end{thm}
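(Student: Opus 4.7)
The plan is to combine three ingredients: (i) the Friedrich--Ivanov theorem~\cite{FI02}, which asserts that when the Nijenhuis tensor is totally skew---equivalently, when the $\SU(3)$-class lies in $\WS_{1\bar1345}$---the characteristic $\U(3)$-connection $\nabla^c$ exists and its torsion satisfies
\begin{equation*}
T(X,Y,Z)\ =\ N(X,Y,Z)\ -\ d\omega(JX,JY,JZ);
\end{equation*}
(ii) the specialisation of Lemma~\ref{lem:Niejn} already recorded there, which yields $N = 8\lambda\psims - 8\mu\psis$ in class $\WS_{1\bar1345}$, since the decomposition $S=\lambda J+\mu\,\Id+S_{34}$ together with $S_{34}J = -J S_{34}$ forces $JS+SJ=-2\lambda\,\Id+2\mu\,J$; and (iii) Lemma~\ref{lem:dom} for $d\omega$ in terms of $\lambda$, $\mu$, $S_{34}$. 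The second assertion will follow almost immediately from Theorem~\ref{thm:charconnonSU} together with Lemma~\ref{lem.charconn-spinor}.

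The only genuine piece of algebra is the computation of $d\omega(JX,JY,JZ)$. I would exploit that $\psis+i\psims$ is of type $(3,0)$: inserting $J$ in any slot multiplies it by $i$, so three insertions give
\begin{equation*}
\psis(JX,JY,JZ)=\psims(X,Y,Z),\qquad \psims(JX,JY,JZ)=-\psis(X,Y,Z).
\end{equation*}
For the cyclic term in Lemma~\ref{lem:dom} I would further use $S_{34}\circ J = -J\circ S_{34}$, so that $\psims(S_{34}(JX),JY,JZ)=\psis(S_{34}(X),Y,Z)$, hence
\begin{equation*}
d\omega(JX,JY,JZ)\ =\ 6\lambda\psims-6\mu\psis+2\,\cyclic{XYZ}\psis(S_{34}(X),Y,Z).
\end{equation*}
Substituting into the Friedrich--Ivanov formula leaves precisely the expression claimed for $T$.

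For the final assertion: by Theorem~\ref{thm:charconnonSU} the hypothesis $4\eta=\delta\omega$ is exactly what makes the $\U(3)$-characteristic connection preserve $\phis$ as well; by Lemma~\ref{lem.charconn-spinor} it then coincides with the $\SU(3)$-characteristic connection, and hence shares its torsion.

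The principal difficulty is largely notational: the Friedrich--Ivanov formula appears in the literature with different sign conventions depending on how the Nijenhuis tensor and the operator $d^c$ are normalised, and one must be careful when propagating the three $J$'s through the cyclic sum in Lemma~\ref{lem:dom}, where $S_{34}$ contributes an extra sign. Once these bookkeeping choices are fixed, the result reduces to direct substitution.
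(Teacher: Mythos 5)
Your proposal is correct and follows essentially the same route as the paper: the Friedrich--Ivanov identity $T=N-d\omega\circ J$, the specialisation of Lemma~\ref{lem:Niejn} giving $N=8\lambda\psims-8\mu\psis$, and Lemma~\ref{lem:dom} pushed through three $J$'s (with the extra sign from $S_{34}J=-JS_{34}$) to get $d\omega\circ J=6\lambda\psims-6\mu\psis+2\cyclic{XYZ}\psis(S_{34}(X),Y,Z)$. The deduction of the final assertion from Theorem~\ref{thm:charconnonSU} and Lemma~\ref{lem.charconn-spinor} is likewise exactly what the paper intends.
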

\begin{proof}
>From Lemma \ref{lem:dom} we infer
\bdm
d\omega\circ \J(X,Y,Z)= 6\lambda\psims(X,Y,Z)-6\mu\psis(X,Y,Z)
+2\cyclic{XYZ}\psis(S_{34}(X),Y,Z)
\edm
The formula $T=N-d \omega \circ J$ (see \cite{FI02}) together with Lemma \ref{lem:Niejn} 
allows to conclude.
\end{proof}
\begin{NB}
For the class $\WS_{1\bar1}$, the torsion $T^c$ of the characteristic connection
is parallel (for nearly K\"ahler manifolds, compare \cite{Kirichenko77, AFS05}). 
For such $G$-manifolds, the $4$-form
$\sigma_T:=\frac{1}{2}\sum_i(e_i\lrcorner T)\wedge(e_i\lrcorner T)$ encodes
a lot of geometric information. It is indeed equal to $dT/2$, it measures
the non-degeneracy of the torsion, and it appears in the Bianchi identity, the
Nomizu construction, and the identity for $T^2$ in the Clifford algebra
(see \cite{Agricola&F&F13} where all these aspects are addressed).
For the class $\WS_{1\bar1}$, an easy computation shows 
\bdm
\sigma_T\ =\lambda \, d\psims-\mu\, d\psis\ =\ 12(\lambda^2+\mu^2)*\omega,
\edm
thus confirming the statement that $\sigma_T$ encodes much of the geometry:
it is basically given by the K\"ahler form.
\end{NB}
\begin{exa}\label{exa:sl2c}
Take the real $6$-manifold $M=\SL(2,\C)$ viewed as the reductive space 
$$\frac{\SL(2,\C)\times\SU(2)}{\SU(2)}=G/H$$ 
with diagonal embedding.
Let $\g, \h$ be the Lie algebras of $G$ and $H$, and set $\g=\h\oplus\m$, so that 
\bdm
\m=\{(A,B)\in\g~|~A-\bar A^t=0,\ \tr A=0,\ B+\bar B^t=0,\ \tr B=0\}.
\edm
The almost complex structure
\bdm
J(A,B)\ =\ (iA,iB)
\edm
defines a $\U(3)$-structure of class $\WS_3$, see \cite{AFS05}.
The characteristic connection $\nabla^c=\nabla+\frac{1}{2}T$ preserves a 
spinor $\phis$, so $\nabla^c$ is also characteristic for the induced 
$\SU(3)$-structure, which is of class $\WS_{35}$.
By Theorem \ref{thm:charconnonSU} we have $\eta=0$, so actually the 
$\SU(3)$-class is $\WS_3$. But then 
$\phis$ is harmonic. 
\end{exa}
The following result shows that this reflects a more general fact:
%
\begin{cor}
Whenever $\nabla^c$ exists, 
\bdm
\phis\in\Ker D \; \iff \; T\phis=0 \; \iff \;
\text{the }\SU(3)\text{-class is }\WS_3.
\edm
%
\end{cor}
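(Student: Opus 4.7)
The plan hinges on two short arguments.

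First, for $\phis \in \Ker D \iff T\phis = 0$, I invoke Lemma \ref{lem.charconn-spinor}, which gives $\nabla^c\phis = 0$ and hence $D^c\phis = 0$. Feeding the defining formula $\nabla^c_X\phis = \nabla_X\phis + \tfrac{1}{4}(X\haken T)\cdot\phis$ into $D^c\phis = \sum_i e_i\cdot\nabla^c_{e_i}\phis$, and using the standard Clifford identity $\sum_i e_i\cdot(e_i\haken T) = 3T$ valid for any $3$-form $T$ (a one-line check on a basis monomial), I obtain
$$0 \;=\; D^c\phis \;=\; D\phis + \tfrac{3}{4}\, T\cdot\phis,$$
so $D\phis$ and $T\cdot\phis$ vanish simultaneously.

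Second, for $\phis \in \Ker D \iff$ class is $\WS_3$, I use the explicit decomposition of $D\phis$ derived just before Theorem \ref{thm:Dphisu3},
$$D\phis = \bigl(\tfrac{1}{2}\delta\omega + \eta\bigr)\cdot\jphis + 6\lambda\jphis - 6\mu\phis.$$
Theorem \ref{thm:charconnonSU} supplies the constraint $4\eta = \delta\omega$, so $\tfrac{1}{2}\delta\omega + \eta = 3\eta$ and $D\phis = 3\eta\cdot\jphis + 6\lambda\jphis - 6\mu\phis$. By the orthogonal splitting (\ref{eq:splittingsu3}) $\Sigma = \R\phis \oplus \R\jphis \oplus \{X\cdot\phis\}$, and because $\js$ preserves the $6$-dimensional summand, $\eta\cdot\jphis = -\js(\eta\cdot\phis)$ lies entirely in $\{X\phis\}$. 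The three pieces of $D\phis$ therefore sit in pairwise orthogonal components, so $D\phis = 0$ is equivalent to $\lambda = \mu = 0$ and $\eta = 0$. By Lemma \ref{lem:classsu3} these vanishings rule out $\WS_1$, $\WS_{\bar 1}$ and $\WS_5$; the constraint $4\eta = \delta\omega$ then forces $\delta\omega = 0$, which eliminates the remaining $\WS_4$ component (within class $\WS_{1\bar 1 3 4 5}$ the codifferential $\delta\omega$ depends solely on the $\WS_4$ piece of $S$, as one verifies from Lemma \ref{lem:nablaompsip}). Only $\WS_3$ survives, and the converse is immediate because $\lambda = \mu = \eta = 0$ in class $\WS_3$.

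The main technical point is the pairwise orthogonality of the three summands of $D\phis$, which reduces to the $\js$-invariance of $\{X\phis\}$ plus the fact that $\phis$ and $\jphis$ are orthogonal unit spinors. A secondary item to verify is that $\delta\omega$ is really blind to the $\WS_5$ part in class $\WS_{1\bar 1 3 4 5}$, so that $\delta\omega = 0$ genuinely forces $\WS_4 = 0$; this follows from the computation $D\phis = \eta\cdot\jphis$ in pure class $\WS_5$, where the two $\eta$-contributions in Lemma \ref{lem:omsu3} cancel.
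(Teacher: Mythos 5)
Your proof is correct and follows essentially the same route as the paper: the first equivalence via Lemma \ref{lem.charconn-spinor} together with the relation $D^c = D + \tfrac{3}{4}T$, and the second by combining the content of Theorem \ref{thm:Dphisu3} with the constraint $4\eta=\delta\omega$ from Theorem \ref{thm:charconnonSU}. The only difference is that you unpack the decomposition $D\phis=(\tfrac12\delta\omega+\eta)\cdot\jphis+6\lambda\jphis-6\mu\phis$ explicitly where the paper simply cites Theorem \ref{thm:Dphisu3} as a black box.
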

\begin{proof}
By Lemma \ref{lem.charconn-spinor}, $\phis$ is $\nabla^c$-parallel;
since the Riemannian Dirac operator and the Dirac operator $D^c$ of
$\nabla^c$ are related by $D^c= D+ \frac{3}{4}T $,
the first equivalence follows. The equivalence of the first and the last
statement is a 
 direct consequence of Theorems \ref{thm:Dphisu3}, \ref{thm:charconnonSU}.
\end{proof}
Example \ref{exa:sl2c} satisfies $T\phis=0$, as shown in \cite{AFS05}, so 
the first condition should be employed if more convenient. This example
also shows that there exist 
$\SU(3)$-structures  different from type $\WS_{1\bar15}$ (namely, $\WS_3$) whose 
torsion is parallel.
%
\section{$\G$ geometry}\label{sec:g2}
%
Let $(M^7,g,\phig)$ be a Riemannian manifold with 
a globally defined unit spinor $\phig$, inducing a $\G$-structure 
$\f$ and the cross product $\times$:
\bdm
\f(X,Y,Z)\ :=\ (X Y Z \phig,\phig)\ =: \ 
g(X\times Y,Z).
\edm
We recall two standard properties (see \cite{FG82} or \cite{AH13}):
\begin{lem}\label{lem:starphiinP}
 The cross product and the $3$-form $\f$ satisfy the identities
\begin{enumerate}
\item \ $ (X\times Y)\phig=-XY\phig-g(X,Y)\phig$\smallbreak
\item \ $*\f(V,W,X,Y)=\f(V,W,X\times Y)-g(V,X)g(W,Y)+g(V,Y)g(W,X)$.
\end{enumerate}
\end{lem}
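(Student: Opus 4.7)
My plan is as follows. For (i), both sides lie in the spin representation $\Delta$, which by \eqref{G2-dec} is the direct sum of $\R\phig$ and $\{Z\phig : Z \in \R^7\}$. It therefore suffices to check equality of inner products against these generators. Pairing with $\phig$: the left-hand side gives $((X\times Y)\phig, \phig) = 0$ by skew-symmetry of Clifford multiplication by vectors, while the right-hand side gives $-(XY\phig, \phig) - g(X,Y)$; these agree once one notes $(XY\phig, \phig) = -(Y\phig, X\phig) = -g(X,Y)$. Pairing with $Z\phig$: the left-hand side equals $g(X\times Y, Z) = \f(X,Y,Z)$ by definition of the cross product, and the right-hand side reduces, after analogous skew-adjointness manipulations, to $(XYZ\phig, \phig) = \f(X,Y,Z)$.

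For (ii) the strategy is to compute $(VWXY\phig, \phig)$ in two different ways and compare. On the one hand, iterating (i) three times replaces Clifford products by cross products plus scalar corrections: starting from $XY\phig = -(X\times Y)\phig - g(X,Y)\phig$, applying (i) again after multiplying by $W$ on the left, and once more after multiplying by $V$, one expresses $VWXY\phig$ as a vector $A$ acting on $\phig$ (automatically orthogonal to $\phig$) plus a scalar multiple of $\phig$. Taking the inner product with $\phig$ isolates the scalar coefficient, which works out to $-\f(V, W, X\times Y) + g(V,W)g(X,Y)$.

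On the other hand, expand $VWXY = (V\wedge W - g(V,W))(X\wedge Y - g(X,Y))$ as a Clifford element into its totally antisymmetric $4$-form part $V\wedge W\wedge X\wedge Y$, its pure $2$-form pieces arising from the cross-metric contractions between the two pairs, and its purely scalar pieces. The $2$-form pieces pair with $\phig$ to give zero (the same skew-adjointness argument as in (i)); the $4$-form piece pairs with $\phig$ to give $\pm *\f(V,W,X,Y)$, via the identification of Hodge star with Clifford multiplication by the volume element $e_1\cdots e_7$, which acts as $\pm\Id$ on $\phig$ with the sign pinned down implicitly by $\f\cdot\phig = 7\phig$ from Lemma \ref{lem:algdim7}; and the scalar pieces contribute exactly $g(V,W)g(X,Y) - g(V,X)g(W,Y) + g(V,Y)g(W,X)$. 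Equating the two expressions for $(VWXY\phig, \phig)$ and cancelling the common $g(V,W)g(X,Y)$ delivers the claimed formula.

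The main obstacle I anticipate is sign bookkeeping in the second computation: identifying $((V\wedge W\wedge X\wedge Y)\phig, \phig)$ with $*\f(V,W,X,Y)$ up to the correct sign requires fixing the action of the Clifford volume element on $\phig$, and then carefully tallying the six scalar contractions coming from $(V\wedge W)(X\wedge Y)$ alongside the two further scalar contributions from $-g(X,Y)V\wedge W$ and $-g(V,W)X\wedge Y$, without double-counting or dropping any $2$-form term that would re-enter through the intermediate 2-form parts.
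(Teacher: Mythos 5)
The paper does not actually prove this lemma: it is quoted as a standard fact with a pointer to \cite{FG82} and \cite{AH13}, so there is no in-house argument to compare against. Your self-contained Clifford-algebra verification is correct and would serve as a proof. For (i), testing both sides against the generators $\phig$ and $Z\phig$ of the decomposition \eqref{G2-dec} is exactly the right move, and both inner-product computations you sketch check out ($(XY\phig,\phig)=-g(X,Y)$, and $(XY\phig,Z\phig)=-(XYZ\phig,\phig)$ after commuting $Z$ through, the extra $g$-terms dying against $(e_i\phig,\phig)=0$). For (ii), equating the two evaluations of $(VWXY\phig,\phig)$ also works: the first gives $-\f(V,W,X\times Y)+g(V,W)g(X,Y)$ by a single application of (i), and the second gives $-*\f(V,W,X,Y)+g(V,W)g(X,Y)-g(V,X)g(W,Y)+g(V,Y)g(W,X)$ once the degree-four part is converted via the volume element. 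One small correction on the sign issue you flag: $\f\cdot\phig=7\phig$ does \emph{not} pin down the action of the volume element, since $\f\phig$ is a $\G$-invariant spinor and hence equals $|\f|^2\phig=7\phig$ for either sign of $\sigma_7\phig$. What fixes the sign is the convention $\sigma_7\phib=-\phib$, which the paper states (and uses) only later, in the proof of Lemma \ref{lem:dunno}; with that convention one gets $((V\wedge W\wedge X\wedge Y)\phig,\phig)=-*\f(V,W,X,Y)$ and the identity comes out exactly as stated. So your argument is sound, but the sign should be anchored to the volume-element convention rather than to Lemma \ref{lem:algdim7}.
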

\bigbreak
Motivated by the fact that $\{X\phig~|~X\in TM^7\}=\phig^\perp$, cf. \eqref{G2-dec}, 
we have 
\begin{dfn}
There exists an endomorphism $S$ of $TM^7$ satisfying
\be\label{dfn.intrinsic-endom}
\nabla_X\phig\ =\ S(X)\phig
\ee
for every tangent vector $X$ on $M^7$, called the \emph{intrinsic endomorphism}
of $(M^7,g,\phig)$. 
\end{dfn}
%
%
\begin{lem}
The intrinsic endomorphism $S$ satisfies
\bdm
(\nabla_V\f)(X,Y,Z)=2*\f(S(V),X,Y,Z).
\edm
\end{lem}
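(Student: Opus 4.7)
The plan is to differentiate the defining formula $\f(X,Y,Z)=(XYZ\phig,\phig)$ directly, substitute $\nabla_V\phig=S(V)\phig$, and then match the result against the expansion of $*\f(S(V),X,Y,Z)$ provided by Lemma \ref{lem:starphiinP}. First I apply $\nabla_V$ to both sides of $\f(X,Y,Z)=(XYZ\phig,\phig)$; the Leibniz rule lets the tangent-covariant derivatives $\nabla_V X,\,\nabla_V Y,\,\nabla_V Z$ reabsorb into the definition of $(\nabla_V\f)(X,Y,Z)$, leaving only the two spinorial pieces
$$(\nabla_V\f)(X,Y,Z)=(XYZ\,S(V)\phig,\phig)+(XYZ\phig,S(V)\phig).$$
Using that Clifford multiplication by a vector is skew-adjoint with respect to the (symmetric) spin inner product, I transport all Clifford factors to the left entry in both terms. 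Accumulating the signs (four swaps in the first term, symmetry plus one swap in the second) yields the compact expression
$$(\nabla_V\f)(X,Y,Z)=(S(V)ZYX\phig,\phig)-(S(V)XYZ\phig,\phig).$$

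On the other side I apply Lemma \ref{lem:starphiinP}(2) with $(V,W,X,Y)\mapsto(S(V),X,Y,Z)$ to get
$$*\f(S(V),X,Y,Z)=\f(S(V),X,Y\times Z)-g(S(V),Y)g(X,Z)+g(S(V),Z)g(X,Y),$$
and then use part (1), $(Y\times Z)\phig=-YZ\phig-g(Y,Z)\phig$, to rewrite $\f(S(V),X,Y\times Z)=(S(V)X(Y\times Z)\phig,\phig)$ as $-(S(V)XYZ\phig,\phig)-g(Y,Z)(S(V)X\phig,\phig)$. The auxiliary identity $(AB\phig,\phig)=-g(A,B)$ for vectors $A,B$, immediate from the adjoint property together with $(A\phig,B\phig)=g(A,B)$, turns the last term into $+g(Y,Z)g(S(V),X)$.

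Equating the two sides, the proposed identity $(\nabla_V\f)(X,Y,Z)=2*\f(S(V),X,Y,Z)$ then reduces to
$$(S(V)(XYZ+ZYX)\phig,\phig)=2\bigl[g(Y,Z)g(S(V),X)-g(X,Z)g(S(V),Y)+g(X,Y)g(S(V),Z)\bigr].$$
This is verified by invoking the elementary Clifford relation
$$XYZ+ZYX=-2g(X,Y)Z+2g(X,Z)Y-2g(Y,Z)X,$$
derived once for all from $AB+BA=-2g(A,B)$, and applying the formula $(S(V)W\phig,\phig)=-g(S(V),W)$ to each of the three resulting vector terms. The main obstacle is pure bookkeeping: tracking the four sign flips that arise while moving $S(V)$ past three Clifford factors, and ensuring that the bivector contributions $(S(V)W\phig,\phig)$ produced on the spinorial side exactly absorb the two algebraic correction terms in Lemma \ref{lem:starphiinP}(2).
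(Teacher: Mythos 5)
Your proof is correct and follows essentially the same route as the paper's: differentiate $\f(X,Y,Z)=(XYZ\phig,\phig)$, substitute $\nabla_V\phig=S(V)\phig$, use skew-adjointness of Clifford multiplication, and expand $*\f(S(V),X,Y,Z)$ via both parts of Lemma \ref{lem:starphiinP}. The only cosmetic difference is in the bookkeeping: the paper commutes $S(V)$ through $XYZ$ to produce $2(XYZS(V)\phig,\phig)$ plus metric correction terms, whereas you package the same information in the symmetrized identity $XYZ+ZYX=-2g(X,Y)Z+2g(X,Z)Y-2g(Y,Z)X$, which is arguably a cleaner way to organize the signs.
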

\begin{proof}
 We calculate
\bea[*]
(\nabla_V\f)(X,Y,Z)&=&(XYZ\nabla_V\phig,\phig)+(XYZ\phig,\nabla_V\phig)\\
&=&(XYZS(V)\phig,\phig)-(S(V)XYZ\phig,\phig)\\
&=&2(XYZS(V)\phig,\phig)-2g(S(V),Z)g(X,Y)+2g(S(V),Y)g(X,Z)\\&&-2g(S(V),X)g(Y,Z).
\eea[*]
With Lemma \ref{lem:starphiinP} we get
\bea[*]
\lefteqn{2*\f(S(V),X,Y,Z)\ = } \\
&=&
-2[(XY(Z\times S(V))\phig,\phig)-g(X,Z)g(S(V),Y) + g(X,S(V))g(Y,Z)]\\
&=&2(XYZS(V)\phig,\phig)-2g(Z,S(V))g(X,Y) + 2g(X,Z)g(S(V),Y)-2g(X,S(V))g(Y,Z).
\eea[*]
\end{proof}
\begin{prop}
 The intrinsic torsion of the $\G$-structure $\f$ 
is 
\bdm
\Gamma\, =\, -\frac{2}{3}S\hook\f
\edm
where $S\hook\f(X,Y,Z)\, :=\, \f(S(X),Y,Z)$.\end{prop}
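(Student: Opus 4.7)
The plan is to mirror, in simplified form, the strategy used for Proposition \ref{prop:intrtorsu} in the $\SU(3)$ case: exploit the fact that the canonical $\G$-connection $\nabla^n = \nabla - \Gamma$ preserves the defining spinor, and then read off $\Gamma$ from the resulting Clifford equation using Lemma \ref{lem:algdim7}. The absence of an $\eta$-term makes the computation cleaner than in dimension $6$.

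Since $\mathrm{Stab}(\phig) = \G$, the spinor $\phig$ is parallel for $\nabla^n$, so lifting to the spin bundle gives $\nabla_X \phig = \tfrac{1}{2}\Gamma(X) \cdot \phig$. Comparing with the defining equation \eqref{dfn.intrinsic-endom} yields
\[
\Gamma(X) \cdot \phig \ = \ 2\, S(X) \cdot \phig.
\]
Next, Lemma \ref{lem:algdim7} supplies $(Y \hook \f) \cdot \phig = -3\, Y \cdot \phig$ for every $Y \in TM^7$; applied with $Y = S(X)$ this converts vector Clifford multiplication on $\phig$ into the action of a $2$-form, giving
\[
2\, S(X) \cdot \phig \ = \ -\tfrac{2}{3}\bigl(S(X) \hook \f\bigr) \cdot \phig.
\]
Hence $\bigl(\Gamma(X) + \tfrac{2}{3}\, S(X) \hook \f\bigr) \cdot \phig = 0$.

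To upgrade this spinorial identity into an equality in $\Lambda^2 TM^7$, we use that the intrinsic torsion takes values in $\g_2^\perp \subset \so(7) \cong \Lambda^2\R^7$, the unique $7$-dimensional irreducible $\G$-summand, and that $Y \mapsto Y \hook \f$ realises a $\G$-equivariant isomorphism $\R^7 \xrightarrow{\sim} \g_2^\perp$; in particular $S(X) \hook \f \in \g_2^\perp$. The Clifford multiplication map
\[
m_{\phig} : \g_2^\perp \ \lra \ \phig^{\perp}, \q \alpha \ \lmapsto \ \alpha \cdot \phig
\]
is $\G$-equivariant between two irreducible $7$-dimensional $\G$-modules, and Lemma \ref{lem:algdim7} shows it is non-zero; by Schur's lemma it is an isomorphism. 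Therefore the spinorial identity above forces $\Gamma(X) = -\tfrac{2}{3}\, S(X) \hook \f$.

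The main (and in fact only) delicate point is the injectivity of $m_{\phig}$ on $\g_2^\perp$; once that is in place, the remainder is a one-line computation with Lemma \ref{lem:algdim7}. Everything else is a direct translation of the argument from Proposition \ref{prop:intrtorsu}, and indeed the simpler decomposition \eqref{G2-dec}---with no analogue of the $\R\jphis$-summand---is exactly what removes the $\eta$ contribution present in the $\SU(3)$ case.
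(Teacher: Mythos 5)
Your proof is correct and follows essentially the same route as the paper's one-line argument: $\tfrac12\Gamma(X)\phig=\nabla_X\phig=S(X)\phig=-\tfrac13(S(X)\hook\f)\phig$, using Lemma \ref{lem:algdim7}. The only difference is that you explicitly justify passing from the spinorial identity to an identity in $\g_2^\perp$ via Schur's lemma, a step the paper leaves implicit; that addition is sound and welcome.
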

\begin{proof}
Immediate from Lemma \ref{lem:algdim7}, for 
$\frac{1}{2}\Gamma(X)\phig=\nabla_X\phig=S(X)\phig
= -\frac{1}{3}(S(X)\hook\f)\phig$. 
\end{proof}
To classify $\G$-structures one looks at endomorphisms of $\R^7$
\bdm
\End(\R^7)=\R \oplus S^2_0\R^7\oplus\g_2\oplus\R^7,
\edm
where $S^2_0\R^7$ denotes symmetric, traceless endomorphisms of $\R^7$.
The original approach to  the classification of $\G$-structures
by Fern\'andez-Gray \cite{FG82} was by the covariant derivative of the
$3$-form $\f$. In \cite{Friedrich03b} it was explained how the intrinsic torsion 
of $\G$-manifolds can be identified with $\End(\R^7)$, thus yielding an 
alternative approach to the Fern\'andez-Gray classes.  
The following result links the intrinsic
endomorphism (and thus the spinorial field equation (\ref{dfn.intrinsic-endom}))
directly to the Fern\'andez-Gray classes.
\begin{lem}\label{lem:classg2}
$\G$-structures fall into four basic types:
\medskip
\begin{center}
\begin{tabular}{|c|c|c|}
\hline
class&description& dimension\\
\hline
\hline
$\WG_1$& $S=\lambda\, \Id$&$1$\\
\hline
$\WG_2$& $S\in\g_2$&$14$\\
\hline
$\WG_3$& $S\in S^2_0\R^7$&$27$\\
\hline
$\WG_4$& $S\in \{V\lrcorner\f~|~V\in\R^7\}$&$7$\\
\hline
\end{tabular}
\end{center}
\medskip
In particular, $S$ is symmetric if and only if
 $S\in \WG_1\oplus\WG_3$ and skew iff it belongs in $\WG_2\oplus\WG_4$.
\end{lem}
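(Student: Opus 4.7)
The plan is to combine the preceding Proposition, which gives $\Gamma=-\tfrac{2}{3}S\hook\f$, with the standard $\G$-decomposition of $\End(\R^7)$. Since contraction with the $\G$-invariant stable form $\f$ is $\G$-equivariant, and since the map $V\mapsto V\hook\f$ is injective (hence an isomorphism onto the $7$-dimensional subspace $\g_2^\perp\subset\so(7)$), tensoring it with the identity on $\R^7$ yields a $\G$-equivariant linear isomorphism
\[
\End(\R^7)\ =\ \R^7\ox\R^7\ \stackrel{\sim}{\lra}\ \R^7\ox\g_2^\perp,\qquad S\ \lmapsto\ S\hook\f.
\]
Since the intrinsic torsion lives in the right-hand space, classifying $\G$-structures by $\Gamma$ is the same as classifying them by $S$, with classes matching module-by-module.

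Next I decompose the left-hand side as a $\G$-module:
\[
\End(\R^7)\ =\ \R\,\Id\ \op\ S^2_0\R^7\ \op\ \Lambda^2\R^7\ =\ \R\,\Id\ \op\ S^2_0\R^7\ \op\ \g_2\ \op\ \g_2^\perp,
\]
using $\so(7)=\g_2\op\g_2^\perp$, in which $\g_2$ is the stabiliser of $\f$ and $\g_2^\perp=\{V\hook\f\mid V\in\R^7\}$. These four summands are pairwise non-isomorphic irreducibles of dimensions $1,\,27,\,14,\,7$, matching the table. To pin down the labelling, note that $S=\lambda\Id$ reduces the defining equation $\nabla_X\phig=S(X)\phig$ to the Killing spinor equation, and by \cite{FKMS} this characterises nearly parallel $\G$-structures (the class $\WG_1$); the three other classes are then forced by their distinct dimensions.

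The symmetry clause is immediate from the decomposition: $\R\Id\op S^2_0\R^7$ is the symmetric part of $\End(\R^7)$, whereas $\g_2\op\g_2^\perp=\Lambda^2\R^7$ is the skew-symmetric part, giving $\WG_1\op\WG_3$ and $\WG_2\op\WG_4$ respectively. The only non-trivial step is the realisation of $\g_2^\perp$ as $\{V\hook\f\}$ inside $\so(7)$, which is a classical fact proved via the eigenspace decomposition of $\Lambda^2\R^7$ under the $\G$-equivariant operator $\alpha\mapsto\hodge(\f\w\alpha)$; everything else follows by $\G$-representation theory and dimension count.
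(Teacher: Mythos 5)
Your argument is correct and follows exactly the route the paper takes (the paper in fact states this lemma without proof, relying on the displayed decomposition $\End(\R^7)=\R\oplus S^2_0\R^7\oplus\g_2\oplus\R^7$ and the references [FG82], [Fr03]). Your writeup merely supplies the standard details left implicit there: the $\G$-equivariant isomorphism $S\mapsto S\hook\f$ onto the intrinsic-torsion space, the Schur-lemma matching of the four pairwise non-isomorphic irreducible summands, and the identification of the symmetric and skew parts.
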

%
\subsection{Spin formulation}
%
By identifying $TM^7\iso \phig^\perp$ we obtain the isomorphism 
$T^*M^7\otimes TM^7 \cong  T^*M^7\otimes\phig^\perp$,  
given explicitly by 
$$\eta\otimes X \mapsto\eta\otimes X\phig.$$ 
This enables us to describe
the tensor product directly, through $\phig$.
\smallbreak
As in the $\SU(3)$ case we will shorten $\WG_1\oplus\WG_3\oplus\WG_4$ 
to $\WG_{134}$ and so on.
%
The restricted Clifford product $\cliffmult: T^*M^7\otimes\phig^\perp\ra\Delta$ 
decomposes the space $\WG_{1234}$, as prescribed by the next result.
\begin{thm}
Let $(M^7,g,\phig)$ be a Riemannian spin manifold with unit spinor $\phi$. 
Then $\phig$ is harmonic
\bdm
D\phig=0
\edm
if and only if the underlying $\G$-structure is of class $\WG_{23}$.
\end{thm}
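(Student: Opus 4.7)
The approach is to reduce the statement to a purely algebraic Clifford computation exploiting the decomposition of $S$. Starting from the defining equation $\nabla_X\phig = S(X)\phig$, pick a local orthonormal frame $e_1,\dots,e_7$ to get
\[
D\phig \;=\; \sum_{i=1}^{7} e_i\cdot\nabla_{e_i}\phig \;=\; \sum_{i=1}^{7} e_i\cdot S(e_i)\cdot\phig.
\]
The key is to evaluate the Clifford sum $\sum_i e_i\cdot S(e_i)$, which lies in $\mathrm{Cl}^{0}\oplus\mathrm{Cl}^{2}$. Invoking the decomposition $\End(\R^{7})=\R\,\Id\oplus S^{2}_{0}\R^{7}\oplus\g_{2}\oplus\{V\haken\f\,:\,V\in\R^{7}\}$ of Lemma \ref{lem:classg2}, I would write $S=\lambda\,\Id+S_{3}+S_{2}+V\haken\f$ and treat the four summands separately.

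For the symmetric part, the identity $e_ie_j+e_je_i=-2\delta_{ij}$ immediately collapses $\sum_i e_i\cdot S_\mathrm{sym}(e_i)$ to $-\tr(S_\mathrm{sym})$: thus $\lambda\,\Id$ contributes $-7\lambda$ and the trace-free piece $S_{3}$ contributes $0$. For the skew part, an analogous bookkeeping gives $\sum_i e_i\cdot S_\mathrm{skew}(e_i)=2\,S_\mathrm{skew}$, now interpreted as a $2$-form acting on $\phig$ by Clifford multiplication. Since $\g_{2}=\mathrm{Stab}(\phig)\subset\mathfrak{so}(7)$ by construction, the $\WG_{2}$ piece satisfies $S_{2}\cdot\phig=0$; and Lemma \ref{lem:algdim7} delivers $(V\haken\f)\cdot\phig=-3\,V\phig$ for the $\WG_{4}$ piece. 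Assembling,
\[
D\phig \;=\; -7\lambda\,\phig \;-\; 6\,V\phig.
\]
By the orthogonal splitting \eqref{G2-dec}, $\Delta=\R\phig\oplus\{X\phig:X\in\R^{7}\}$, so this vanishes if and only if both $\lambda=0$ and $V=0$, i.e.\ precisely when the $\WG_{1}$ and $\WG_{4}$ components of the intrinsic torsion are absent --- the class $\WG_{23}$.

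The only step that is not routine bookkeeping is the separation $\sum_i e_i\cdot S(e_i)= -\tr(S_\mathrm{sym})+2\,S_\mathrm{skew}$ in the Clifford algebra, together with the identification $\g_{2}=\{\omega\in\Lambda^{2}\R^{7}:\omega\cdot\phig=0\}$ --- both are short standard facts about the spin representation, so I do not expect a serious obstacle. The whole proof is really a one-line Clifford calculation once the intrinsic endomorphism is broken up according to Lemma \ref{lem:classg2}.
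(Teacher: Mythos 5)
Your proof is correct, but it follows a more computational route than the paper's. The paper disposes of the theorem in three lines by pure representation theory: since $\Delta=\R\phig\oplus\phig^\perp\cong\WG_1\oplus\WG_4$ as a $\G$-module, the $\G$-equivariant Clifford multiplication $\cliffmult\colon T^*M^7\otimes\phig^\perp\to\Delta$ must kill the summands $\WG_2\oplus\WG_3$ (they do not occur in the target) and be injective on $\WG_1\oplus\WG_4$, whence $\Ker\cliffmult=\WG_{23}$ and the claim follows from $D=\cliffmult\circ\nabla$. You instead split $\sum_i e_i\cdot S(e_i)$ into its symmetric and skew parts and evaluate each of the four torsion components by hand, using $S_2\cdot\phig=0$ for $S_2\in\g_2=\mathrm{Stab}(\phig)$ and $(V\haken\f)\cdot\phig=-3V\phig$ from Lemma \ref{lem:algdim7}; all four evaluations check out (note $\sum_{i,j}\f(V,e_i,e_j)e_ie_j=2(V\haken\f)$ as a Clifford element, which is where your factor $-6$ comes from). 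What your version buys is the explicit formula $D\phig=-7\lambda\phig-6V\phig$, which the abstract argument does not produce directly and which is consistent with the paper's normalisation $\lambda=-\frac{1}{7}(D\phig,\phig)$ and with the entry $D\phig=-7\lambda\phig$ for class $\WG_{123}$ in Table \ref{table.G2}; what the paper's version buys is brevity and independence from Clifford sign conventions. Both arguments ultimately rest on the same decomposition of $\End(\R^7)$ into irreducible $\G$-modules, so the difference is one of technique rather than of substance.
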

\begin{proof}
First of all, the spin representation splits as 
$\Delta=\R\phig\oplus \phig^\perp=\WG_{14}$, so we may write the 
intrinsic-torsion space as
\bdm
TM^7\otimes \phig^\perp= \Delta\oplus \WG_{23}.
\edm
Yet the multiplication $m$ is 
$\G$-equivariant, so 
$\Ker\cliffmult=
\{\tsum_{ij}a_{ij}e_i\otimes e_j\phig~|~(a_{ij})\in S^2_0\R^7\}=
\WG_{23}$, and the assertion follows from the definition of $D=\cliffmult\circ\nabla$.
\end{proof}
\begin{lem}\label{lem:deltapsi} 
In terms of $\phig$ the module $\WG_{24}$ depends on
\bdm
\tfrac{1}{2}\delta\f(X,Y) = (X\phig,\nabla_Y\phig)-(Y\phig,\nabla_X\phig)+(D\phig,XY\phig)+g(X,Y)(D\phig,\phig).
\edm
\end{lem}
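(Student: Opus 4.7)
The plan is to start from the defining equation $\f(X,Y,Z)=(XYZ\phig,\phig)$, apply the Leibniz rule in the Levi-Civita connection, and then compute the codifferential $\delta\f(X,Y)=-\sum_i(\nabla_{e_i}\f)(e_i,X,Y)$ in a local orthonormal frame. The task is to manipulate the resulting Clifford expressions into the form stated, with two boundary terms involving $\nabla_X\phig,\nabla_Y\phig$, and two terms in which the Dirac operator $D\phig=\sum_i e_i\nabla_{e_i}\phig$ appears.

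The key algebraic step is the commutation identity
\[
e_iXY \;=\; XY\,e_i \,-\, 2g(e_i,X)\,Y \,+\, 2g(e_i,Y)\,X,
\]
derived twice from $UV+VU=-2g(U,V)$. Summing the identity against $\nabla_{e_i}\phig$ immediately produces $XY\,D\phig$ plus two correction terms $-2Y\nabla_X\phig+2X\nabla_Y\phig$, and a completely analogous computation handles the piece $\sum_i(e_iXY\phig,\nabla_{e_i}\phig)$ after first sliding $e_i$ past $XY$ in the left-hand slot and then moving $YX$ back through $e_i$ using the same relation. This is the bookkeeping that needs care, but it is purely linear-algebraic.

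Next I would repeatedly use that Clifford multiplication by a tangent vector is skew-adjoint, $(U\psi_1,\psi_2)=-(\psi_1,U\psi_2)$, to turn expressions like $(XY\,D\phig,\phig)$ into $(D\phig,YX\phig)$ and then into $-(D\phig,XY\phig)-2g(X,Y)(D\phig,\phig)$ via $YX=-XY-2g(X,Y)$. This is precisely what generates both the mixed $(D\phig,XY\phig)$ term and the $g(X,Y)(D\phig,\phig)$ term in the statement.

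I expect the only real obstacle to be sign discipline: four distinct inner-product terms must survive with exactly the right signs after two rounds of skew-adjoint manipulation and two Clifford commutations. A clean way to avoid errors is to reduce everything to the real symmetric spinor inner product $(\,,\,)$ on $\Delta$ and track carefully that $(A\phig,\nabla_B\phig)=-(\phig,A\nabla_B\phig)$ for a vector $A$. Once the pieces $\sum_i(e_iXY\nabla_{e_i}\phig,\phig)$ and $\sum_i(e_iXY\phig,\nabla_{e_i}\phig)$ have been reduced to $-(D\phig,XY\phig)-2g(X,Y)(D\phig,\phig)-2(Y\nabla_X\phig,\phig)+2(X\nabla_Y\phig,\phig)$ and $-(D\phig,XY\phig)$ respectively (the two internal $X/Y$-correction terms in the second sum cancel), multiplying by $-1/2$ and rewriting $(X\nabla_Y\phig,\phig)=-(X\phig,\nabla_Y\phig)$ yields the claimed identity.
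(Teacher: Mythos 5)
Your proposal is correct and follows essentially the same route as the paper: expand $\delta\f(X,Y)=-\sum_i(\nabla_{e_i}\f)(e_i,X,Y)$ via the Leibniz rule, commute $e_i$ through $XY$ with the Clifford relations to produce $D\phig$ plus the two correction terms, and clean up with skew-adjointness of Clifford multiplication; the signs you record at each stage match the paper's computation. The only cosmetic difference is that the paper first cyclically permutes the arguments to $(\nabla_{e_i}\f)(X,Y,e_i)$ so that $e_i$ sits last, whereas you keep it first — the bookkeeping is equivalent.
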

\begin{proof}
To prove the claim we simply calculate, in some orthonormal basis $e_1,\ldots,e_7$,  
\bea[*]
\delta\f(X,Y)
&=&-\tsum(\nabla_{e_i}\f)(e_i,X,Y)
=-\tsum[(XYe_i\nabla_{e_i}\phig,\phig)+(XYe_i\phig,\nabla_{e_i}\phig)]\\
&=&- (XYD\phig,\phig)-\tsum [-2g(e_i,Y)(X\phig,\nabla_{e_i}\phig)+2g(e_i,X)(Y\phig,\nabla_{e_i}\phig)\\&&+(e_iXY\phig,\nabla_{e_i}\phig)]\\
&=&2(D\phig,XY\phig)+2(X\phig,\nabla_Y\phig)-2(Y\phig,\nabla_X\phig)+2g(X,Y)(D\phig,\phig).
\eea[*]
\end{proof}
At this point the complete picture is at hand.
\begin{thm}\label{G2thm}
The basic classes of $\G$-manifolds are described by the 
spinorial field equations for  $\phig$ as in Table \ref{table.G2}.
Here, 
$\lambda:=-\frac{1}{7}(D\phig,\phig):M\to \R$ is a real function,  $\times$ 
denotes the cross product relative to $\f$, and
\bdm
S\phig:=\sum_{i,j}g(e_i,S(e_j))e_ie_j\phig .
\edm

\begin{table}
\caption{Correspondence of $\G$-structures and spinorial field equations
(see Theorem \ref{G2thm}).}\label{table.G2}
\begin{tabular}{|c|c|}
 \hline
{class} & {spinorial equation}\\
\hline \hline
$\WG_1$& $\nabla_X\phig=\lambda X\phig$\\
\hline
$\WG_2$& $\nabla_{X\times Y}\phig=Y\nabla_X\phig-X\nabla_Y\phig+2g(Y,S(X))\phig$\\
\hline
$\WG_3$& $(X\nabla_Y\phig,\phig)=(Y\nabla_X\phig,\phig)$ and $\lambda=0$\\
\hline
$\WG_4$& $\nabla_X\phig=XV\phig+g(V,X)\phig$\quad for some $V\in TM^7$\\
\hline\hline
$\WG_{12}$& $\nabla_{X\times Y}\phig= 
  Y\nabla_X\phig-X\nabla_Y\phig+g(Y,S(X))\phig-g(X,S(Y))\phig 
  - \lambda (X\times Y)\phig$\\
\hline
$\WG_{13}$& $(X\nabla_Y\phig,\phig)=(Y\nabla_X\phig,\phig)$\\
\hline
$\WG_{14}$& $\exists V,W\in TM^7$ : $\nabla_X\phig=XVW\phig-(XVW\phig,\phig)\phig$\\
\hline
$\WG_{23}$& $S\phig=0$ and $\lambda=0$, or $D\phig=0$\\
\hline
$\WG_{24}$& $(X\nabla_Y\phig,\phig)=-(Y\nabla_X\phig,\phig)$\\
\hline
$\WG_{34}$& $3(X\phig,\nabla_Y\phig)-3(Y\phig,\nabla_X\phig)=(S\phig,XY\phig)$ and $\lambda=0$\\
\hline
$\WG_{123}$& $(S\phig,X\phig)=0$, or $D\phig=-7\lambda\phig$\\
\hline
$\WG_{124}$& $(Y\nabla_X\phig,\phig)+(X\nabla_Y\phig,\phig)=-2\lambda g(X,Y)$\\
\hline
$\WG_{134}$& $3(X\phig,\nabla_Y\phig)-3(Y\phig,\nabla_X\phig)=(S\phig,XY\phig)-7\lambda g(X,Y)$\\
\hline
$\WG_{234}$& $\lambda=0$\\
\hline
\end{tabular}
\end{table}
\end{thm}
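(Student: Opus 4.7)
The plan is to verify each row of Table \ref{table.G2} by translating the algebraic description of $S$ given in Lemma \ref{lem:classg2} into an equivalent spinorial identity, using the defining equation $\nabla_X\phig = S(X)\phig$ together with a small set of universal Clifford identities on $\phig$. The three workhorses would be: the isomorphism $\phig^\perp = \{X\phig : X \in TM^7\}$ together with $(X\phig, Y\phig) = g(X,Y)$; the identity $XY\phig = -(X\times Y)\phig - g(X,Y)\phig$ coming from Lemma \ref{lem:starphiinP}; and the $\G$-invariant branching of $2$-forms $\Lambda^2\R^7 = \Lambda^2_7 \oplus \Lambda^2_{14}$ under Clifford multiplication on $\phig$, namely $\Lambda^2_{14}\cdot\phig = 0$ (since $\g_2 = \Lambda^2_{14}$ stabilises $\phig$) while $(V\lrcorner\f)\cdot\phig = -3\,V\phig$ by Lemma \ref{lem:algdim7}.

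With these tools I would first record the universal formulas. The computation $(X\nabla_Y\phig, \phig) = (XS(Y)\phig, \phig) = -g(X, S(Y))$ shows that the symmetry-type conditions $(X\nabla_Y\phig, \phig) = \pm(Y\nabla_X\phig, \phig)$ encode precisely the symmetry or skew-symmetry of $S$. Expanding $D\phig = \sum_i e_i S(e_i)\phig$ via the Clifford rule $e_je_i + e_ie_j = -2\delta_{ij}$ yields
\begin{equation*}
D\phig = -\tr(S)\,\phig + 2\alpha_S\cdot\phig, \qquad S\phig = -D\phig - 2\tr(S)\,\phig,
\end{equation*}
where $\alpha_S$ is the $2$-form dual to the skew part $(S-S^T)/2$. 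Pairing with $\phig$ gives $(D\phig, \phig) = -\tr(S)$, confirming $\lambda = -\tfrac{1}{7}(D\phig,\phig)$ as the $\WG_1$-scalar. The pure classes are then essentially immediate: $\WG_1$ and $\WG_3$ follow directly, and for $\WG_4$ one substitutes $S(X) = V\times X$ and expands via Lemma \ref{lem:starphiinP} to obtain $\nabla_X\phig = XV\phig + g(V,X)\phig$. The delicate case is $\WG_2$: here I would split the listed equation into its $\R\phig$- and $\phig^\perp$-parts, reading off skew-symmetry $g(X, S(Y)) + g(Y, S(X)) = 0$ from the scalar part and, after expanding $YS(X)\phig - XS(Y)\phig$, the derivation property $S(X\times Y) = X\times S(Y) + S(X)\times Y$ from the orthogonal part --- which is precisely the defining condition of $\g_2 \subset \so(7)$ as the derivation algebra of the cross product.

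The Dirac-type rows ($\WG_{23}$, $\WG_{123}$, the equivalence $S\phig = 0 \Leftrightarrow D\phig = 0$ once $\lambda=0$) follow directly from the two-term expansion of $D\phig$ above, exploiting the independence of the three pieces $\R\cdot\phig$, $\Lambda^2_7\cdot\phig$, and $\Lambda^2_{14}\cdot\phig$. The remaining mixed-class rows are assembled by superposition: each listed equation isolates or annihilates exactly the desired irreducible summand of $\End(\R^7)$ by projecting $\nabla_X\phig = S(X)\phig$ onto $\R\phig$ and onto the $\G$-isotypical pieces of $\phig^\perp$. The $-\lambda(X\times Y)\phig$ correction in $\WG_{12}$ absorbs the $\WG_1$-trace, while the pairing $(S\phig, XY\phig)$ appearing in $\WG_{34}$ and $\WG_{134}$ extracts the skew contribution via the second identity above. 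The main obstacle I anticipate is bookkeeping: reconciling sign conventions across the Clifford product, the cross product, and the $2$-form-to-spinor map, and in particular, for $\WG_{14}$, rewriting $VW\phig$ as the action of an element of $\R\,\Id \oplus \{V'\lrcorner\f\}$ on $\phig$. Once the three universal identities are in hand, however, every row reduces to a short algebraic verification.
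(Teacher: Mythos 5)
Your proposal is correct and, for most rows, follows the same road as the paper: both proofs start from $\nabla_X\phig=S(X)\phig$, use $(X\nabla_Y\phig,\phig)=-g(X,S(Y))$ to encode (skew-)symmetry of $S$, treat $\WG_2$ via the derivation property $S(X\times Y)=S(X)\times Y+X\times S(Y)$ expanded with Lemma \ref{lem:starphiinP}(1), and treat $\WG_4$ by substituting $S(X)=V\times X$. Where you genuinely diverge is in the Dirac-type rows ($\WG_3$, $\WG_{23}$, $\WG_{123}$, $\WG_{34}$, $\WG_{134}$): the paper derives these from the Fern\'andez--Gray co-differential conditions on $\delta\f$ via Lemma \ref{lem:deltapsi}, whereas you use the universal Clifford expansion of $D\phig$ and $S\phig$ into a trace term plus the $2$-form of the skew part, combined with the branching $\Lambda^2\R^7=\Lambda^2_7\oplus\g_2$, $\g_2\cdot\phig=0$ and $(V\lrcorner\f)\phig=-3V\phig$. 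Your route is arguably cleaner and closer in spirit to the paper's earlier unnumbered theorem on $D\phig=0$ (which already uses $\Ker\cliffmult=\WG_{23}$); concretely it yields $D\phig=-7\lambda\phig-6V\phig$ and $S\phig=-7\lambda\phig+6V\phig$ (so $S\phig=-D\phig-2\tr(S)\phig$, as you state, up to your sign convention for the dual $2$-form), from which the rows $\WG_{23}$, $\WG_{123}$ and $\WG_{234}$ drop out immediately, and which also reconciles with the paper's intermediate claim that $3D\phig=-S\phig$ on $\phig^\perp$ forces the $\WG_4$-component to vanish. What the paper's route buys is a direct link to the classical Fern\'andez--Gray formulation in terms of $d\f$ and $\delta\f$; what yours buys is fewer intermediate lemmas and a transparent representation-theoretic reason why exactly $\WG_{14}$ survives Clifford multiplication. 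The only soft spots are the ones you flag yourself: fixing the sign/normalisation conventions and carrying out the constant-matching for $\WG_{34}$ and $\WG_{134}$, neither of which is a structural gap.
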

\begin{proof}
 The proof relies on standard properties, like the fact that  
$S$ is symmetric if and only if 
$(X\nabla_Y\phig,\phig)=(Y\nabla_X\phig,\phig)$. It could be recovered by 
going through the original argument of \cite{FG82}, but we choose an 
alternative approach.\smallbreak

For $\WG_1$ there is actually nothing to prove, for the given equation 
is nothing but the Killing spinor equation characterising this type of 
manifolds \cite{FK90,BFGK91}.
\smallbreak

The endomorphism $S$ lies in $\WG_2$ if and only if 
$S(X\times Y)=S(X)\times Y + X\times S(Y)$. Then
\bea[*]
\nabla_{X\times Y}\phig&=&(-Y\times S(X)+X\times S(Y))\phig \\
&=&YS(X)\phig+g(Y,S(X))\phig-XS(Y)\phig-g(X,S(Y))\phig\\
&=&Y\nabla_X\phig-X\nabla_Y\phig+2g(S(X),Y)\phig.
\eea[*]
By taking the dot product with $\phig$ we re-obtain that $S$ is skew-symmetric.
\smallbreak

For $\WG_3$ we use Lemma \ref{lem:deltapsi}:
$$\tfrac{1}{2}\delta\f(X,Y) = 
(D\phig,XY\phig)+g(X,Y)(D\phig,\phig)+(X\phig,\nabla_Y\phig)
-(Y\phig,\nabla_X\phig).$$
This fact together with $\tr\, S = -(D\phig,\phig)$ allows to conclude. 
\smallbreak

Suppose $S \in \WG_4$. The vector representation $\R^7$ is $\{V\times . ~|~V\in \R^7\}$, so if 
$S$ is represented by $V$ we have 
$\nabla_X\phig=(V\times X)\phig =-VX\phig-g(V,X)\phig=XV\phig+g(V,X)\phig$.
\smallbreak

As for the remaining `mixed' types, we shall only prove what is not obvious.
For type  $\WG_{12}$, $S$ is of the form $S= \lambda\Id + S'$, where
$S'\in\g_2$. Thus, 
\bdm
\nabla_{X\times Y} \phig = \lambda (X\times Y)\phig + S'(X\times Y)\phig. 
\edm
The first term may be rewritten using Lemma \ref{lem:starphiinP} (1), while we deal
with the second basically as we did in the pure  $\WG_{2}$ case.
A clever rearrangement of terms then yields the desired identity.

By \cite{FG82} a structure is of type $\WG_{23}$ if $(D\phig,\phig)=0$ and 
$0=\tfrac{1}{2}\tsum_{i,j}\delta \f(e_i,e_j)\f(e_i,e_j,X)$.
This is equivalent to
\bea[*]
0&=&\tsum_{i,j}[(D\phig,e_ie_j\phig)-7g(e_i,e_j)\lambda
+(e_i\phig,S(e_j)\phig)-(e_j\phig,S(e_i)\phig)](e_ie_jX\phig,\phig)\\
&=&-\tsum_{i,j}(D\phig,e_ie_j\phig)(e_ie_j\phig,X\phig)
+2\tsum_{i,j}(e_i\phig,S(e_j)\phig)(e_ie_jX\phig,\phig).
\eea[*]
As $\{e_ie_j\phig~|~i,j=1,\ldots,7\}$ spans $\Delta$, we obtain 
$\sum_{i,j}(\phib,e_ie_j\phig)e_ie_j\phig=6\phig_1+(\phib,\phig)\phig$. Define 
\bdm
S\phig:=\sum_{i,j}g(e_i,S(e_j))e_ie_j\phig
\edm
and get $0=-6(D\phig,X\phig)
-2(S\phig,X\phig)$. Therefore $3D\phig=-S\phig$ holds on $\phig^\perp$. 
If $\lambda=0$ we then have
\bdm
(S\phig,\phig)=\tsum g(e_i,S(e_j))(e_ie_j\phig,\phig)=-\tsum g(e_i,S(e_i))=(D\phig,\phig)=0.
\edm
\bigbreak
A structure is of type $\WG_{34}$ if $(D\phig,\phig)=0$ and 
\bdm
3\delta\f(X,Y)=\tfrac{1}{2}\tsum_{i,j}\delta \f(e_i,e_j)\f(e_i,e_j,X\times Y).
\edm
Due to the calculation above, the right-hand side equals 
\bea[*]
-6(D\phig,(X\times Y)\phig)&-&2(S\phig,(X\times Y)\phig)\\
&=& 6(D\phig,XY\phig)-42g(X,Y)\lambda+2(S\phig,XY\phig)+2g(X,Y)(S\phig,\phig).
\eea[*]
As
\bdm
3\delta\f(X,Y)=6(D\phig,XY\phig)-42g(X,Y)\lambda+6(X\phig,\nabla_Y\phig)
-6(Y\phig,\nabla_X\phig),
\edm
the defining equation is equivalent to 
\bdm
3(X\phig,\nabla_Y\phig)-3(Y\phig,\nabla_X\phig)=(S\phig,XY\phig)-7g(X,Y)\lambda
\edm
and if $\lambda=0$ we get 
$3(X\phig,\nabla_Y\phig)-3(Y\phig,\nabla_X\phig)=(S\phig,XY\phig)$.
\bigbreak

As for $\WG_{124}$, note that $S$ satisfies
$(Y\nabla_X\phig,\phig)+(X\nabla_Y\phig,\phig)=-2g(X,Y)\lambda$ 
if it is skew. If symmetric, instead, it satisfies the equation iff 
$g(X,S(Y))=g(X,Y)\lambda$, i.\,e. if $S=\lambda\,\Id$.
\end{proof}
\begin{NB}
The spinorial equation for $\WG_4$ defines a connection with vectorial 
torsion \cite{AF06}. This is a $\G$-connection, since $\phig$ is parallel 
by construction.
\end{NB}
%
\subsection{Adapted connections}
%
Let $(M^7,g,\phig)$ be a $7$-dimensional spin manifold. As usual we 
identify $(3,0)$- and $(2,1)$-tensors using $g$. 
The prescription
\bdm
\nabla^n\ :=\ \nabla+\frac{2}{3}S\hook\f
\edm
defines a natural $\G$-connection, 
since 
$(X\hook\f)\phig=-3X\phig, \ 
\f\phig=7\phig$ and $\f\phib=-\phib, \forall \phib\perp\phig$. 
Abiding by Cartan's formalism, the set of metric connections is isomorphic to 
\bdm
\underbrace{\R^7}_{\Lambda^1\R^7}\oplus
\underbrace{(\R\oplus\R^7\oplus S^2_0\R^7)}_{\Lambda^3\R^7}
\oplus\underbrace{(\g_2\oplus S^2_0\R^7\oplus\R^{64})}_{\mathcal{T}}
\edm
under $\G$, and this  immediately yields  the
analogous statement to Lemma \ref{lem:connsu3}:
\begin{lem}
The `pure' classes of a $\G$-manifold $(M^7,g,\phig)$ correspond to $\nabla^n$ in:
\medskip
\begin{center}
\begin{tabular}{c|c|c|c|c}
class of $M^7$ & $\WG_1$  & $\WG_2$ & $\WG_3 $ & $\WG_4$ \\
\hline
type of $\nabla^n$ & $\Lambda^3 $ & $\mathcal{T}$ & $\Lambda^3\oplus\mathcal{T}$ &
$TM^7 \oplus\Lambda^3$ \\
\end{tabular}
\end{center}
\end{lem}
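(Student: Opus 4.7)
My plan is to decompose the difference tensor
$A := \nabla^n-\nabla = \tfrac{2}{3}S\hook\f$, which is automatically skew in its last two slots, under the $\G$-irreducible splitting
\[
TM^7\otimes\Lambda^2TM^7 \;\cong\; TM^7 \,\oplus\, \Lambda^3TM^7 \,\oplus\, \mathcal{T}
\]
via the three standard projections: the totally skew summand is read off from $\tfrac{1}{3}\cyclic{XYZ}A(X,Y,Z)$, the vectorial summand from the trace $X\mapsto\sum_i A(e_i,e_i,X)$, and the cyclic-traceless remainder lands in $\mathcal{T}$. For each basic class I then substitute the explicit form of $S$ supplied by Lemma \ref{lem:classg2}.

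For $\WG_1$, $S=\lambda\,\Id$ immediately gives $A=\tfrac{2}{3}\lambda\f\in\Lambda^3$. For $\WG_2$, the hypothesis $S\in\g_2$ is precisely that $S$ acts as a derivation on $\f$, so the cyclic sum $\cyclic{XYZ}\f(SX,Y,Z)$ vanishes and the $\Lambda^3$-part is killed; for the trace I note that $\sum_i\f(Se_i,e_i,X)$ is, up to a constant, the $\Lambda^2\R^7$-inner product $\langle S, X\hook\f\rangle$, which vanishes because $\g_2=\Lambda^2_{14}$ is orthogonal to the $7$-dimensional summand of $\Lambda^2$ containing $X\hook\f$. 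Hence $A\in\mathcal{T}$.

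For $\WG_3$ with $S\in S^2_0\R^7$, the symmetric/skew pairing forces $\sum_i\f(Se_i,e_i,X)=0$, so the vectorial part vanishes; the cyclic sum yields a $3$-form that realises the derivation action of $S$ on $\f$, landing in the irreducible summand $\Lambda^3_{27}\cong S^2_0\R^7$, which is generically non-zero, while the cyclic-traceless remainder in $\mathcal{T}$ is non-zero as well, placing $A\in\Lambda^3\oplus\mathcal{T}$. For $\WG_4$, $S=V\hook\f$ translates to $S(X)=V\times X$, and Lemma \ref{lem:starphiinP}(2) then rewrites
\[
\f(V\times X,Y,Z) \;=\; (V\hook*\f)(X,Y,Z) + g(V,Y)g(X,Z) - g(V,Z)g(X,Y),
\]
exposing a totally skew piece $V\hook*\f\in\Lambda^3_7$ and a vectorial piece (with $\theta=V^\flat$), so $A\in TM^7\oplus\Lambda^3$.

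The main obstacle I foresee is verifying in $\WG_3$ that neither the $\Lambda^3$ nor the $\mathcal{T}$ projection accidentally collapses: the former requires the non-triviality of the derivation map $S^2_0\R^7\to\Lambda^3_{27}$, while the latter could only fail if $S\hook\f$ were a priori totally skew, which it is not. Both facts follow from the standard $\G$-dimension count inside $\R^7\otimes\Lambda^2\R^7$, and the whole argument then becomes a bookkeeping exercise mirroring Lemma \ref{lem:connsu3} in the $\SU(3)$ setting.
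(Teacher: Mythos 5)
Your argument is correct, and it is the computation the paper only gestures at: the paper asserts that the $\G$-decomposition of the space of metric connections ``immediately yields'' the table, just as it delegates the analogous $\SU(3)$ statement (Lemma \ref{lem:connsu3}) to a computer algebra system, whereas you actually carry out the projection of $A=\tfrac{2}{3}S\hook\f$ onto the vectorial, totally skew and cyclic-traceless summands class by class. The individual steps all check out: the vectorial part $\sum_i\f(Se_i,e_i,X)$ is (up to a constant) the pairing of the skew part of $S$ with $X\hook\f$, which lies in the $7$-dimensional summand of $\Lambda^2\R^7$ and is therefore orthogonal to $\g_2$, and it pairs to zero with symmetric $S$; the cyclic sum is the derivation action of $S$ on $\f$, which vanishes exactly for $S\in\g_2$ and maps $S^2_0\R^7$ isomorphically onto the $27$-dimensional summand of $\Lambda^3$; and the identity $\f(V\times X,Y,Z)=(V\hook{*}\f)(X,Y,Z)+g(V,Y)g(X,Z)-g(V,Z)g(X,Y)$ follows from Lemma \ref{lem:starphiinP}(2) by the skew-symmetry of ${*}\f$ and cleanly exhibits the $TM^7\oplus\Lambda^3$ splitting in class $\WG_4$. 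Two cosmetic points: for $\WG_2$ you should say that the derivation action of $S\in\g_2$ \emph{annihilates} $\f$ (every endomorphism acts as a derivation — the point is that this one acts trivially); and your non-degeneracy worry in class $\WG_3$ is settled independently by the paper's later computation of the eigenvalues of $\Theta\circ\kappa$, which are $2/9$ on $\WG_3$ — being neither $0$ nor $1$, this confirms that $S\hook\f$ is neither totally skew nor purely cyclic-traceless there, so both summands genuinely appear.
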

\medskip
Connections of type $\mathcal{T}$ are rarely considered, although calibrated  
$\G$-structures ($\WG_2$) have an adapted connection of this type \cite{CI07}. 
\smallbreak 

Among $\G$-connections there exists 
at most one connection $\nabla^c$ with skew-symmetric torsion $T^c$. 
Therefore we may write 
$$S=\lambda\,\Id+S_3+S_4\in\WG_{1}\oplus\WG_{3}\oplus\WG_{4}$$
with $S_3\in S_0^2TM^7$ and $S_4=V\hook \f$ for some vector $V$.
\begin{prop}
Let $(M^7,g,\phig)$ be a $\G$-manifold of type $\WG_{134}$.
The characteristic  torsion reads 
\bdm
T^c(X,Y,Z)=-\frac{1}{3}\cyclic{XYZ}\f((2\lambda\Id+9S_3+3S_4)X,Y,Z).
\edm
\end{prop}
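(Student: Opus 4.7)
The approach is to invoke Lemma~\ref{lem.charconn-spinor}: $\nabla^c$ is the unique metric connection with totally skew torsion preserving $\phig$, and its existence for class $\WG_{134}$ is standard (\cite{FI02}; equivalently, the $\WG_2$-component of the intrinsic endomorphism must vanish). Hence it suffices to display a $3$-form $T^c$ for which
\[
\tfrac{1}{4}(X\hook T^c)\cdot\phig \ =\ -S(X)\cdot\phig
\]
holds for every vector field $X$; uniqueness then forces $T^c$ to be the characteristic torsion.

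I would split $T^c = T^c_1 + T^c_3 + T^c_4$ according to $S = \lambda\,\Id + S_3 + S_4$, and verify the three identities in turn. The $\WG_1$-contribution, $T^c_1 = -2\lambda\,\f$, is handled at once via $(X\hook\f)\cdot\phig = -3\,X\cdot\phig$ from Lemma~\ref{lem:algdim7}. For the $\WG_4$-piece, setting $S_4(X) = V\times X$, the two identities of Lemma~\ref{lem:starphiinP} rewrite $(X\hook T^c_4)\cdot\phig$ as a combination of $VX\phig$ and $g(V,X)\phig$ that exactly matches $-4\,S_4(X)\cdot\phig$. The $\WG_3$-piece is the most delicate: $\cyclic{XYZ}\f(S_3 X, Y, Z)$ is the standard embedding of a symmetric traceless tensor $S_3$ into the irreducible summand $\Lambda^3_{27}\subset\Lambda^3\R^7$, and its Clifford action on $\phig$ is computed in a local frame using $\f\cdot\phig = 7\phig$, $\f$ acting as $-\Id$ on $\phig^\perp$, and the tracelessness of $S_3$ to collapse the frame-dependent sums.

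The main obstacle is the $\WG_3$-verification: the coefficients $2,\ 9,\ 3$ appearing in the statement are the relative normalisations of the three $\G$-equivariant embeddings $\R \oplus S^2_0\R^7 \oplus \R^7 \hookrightarrow \Lambda^3\R^7$, and pinning them down through the Clifford identities requires patience with the trace-free cancellations. An alternative route is to first compute $d\f$ in terms of $S$ from the identity $(\nabla_V\f)(X,Y,Z) = 2\ast\!\f(S(V),X,Y,Z)$ proved earlier in this section, and then apply the Friedrich--Ivanov formula $T^c = -\ast d\f + \tfrac{1}{6}\langle d\f, \ast\f\rangle\,\f + \ast(\theta\wedge\f)$, with $\theta$ the Lee form; this trades Clifford bookkeeping for the Fern\'andez--Gray decomposition of $d\f$, and is arguably cleaner for the $\WG_3$ component at the cost of additional form manipulations for the $\WG_1$ and $\WG_4$ parts.
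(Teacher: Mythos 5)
Your strategy --- exhibit a $3$-form $T^c$ with $\tfrac14(X\hook T^c)\phig=-S(X)\phig$ for all $X$ and then appeal to uniqueness of the characteristic connection --- is legitimate and genuinely different from the paper's argument. The paper never touches the spinor at this point: it introduces the maps $\kappa$ and $\Theta$ between $T^*M^7\otimes\g_2^\perp$ and $\Lambda^3(T^*M^7)$, observes that $\Theta\circ\kappa$ acts by the scalars $1,\,0,\,2/9,\,2/3$ on the four summands $\WG_i$, and invokes the criterion of \cite{FI02} that $T$ is characteristic when $-2\Gamma=\Theta(T)$; the relative weights $2:9:3$ in the statement are precisely the inverse eigenvalues $1:\tfrac92:\tfrac32$. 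Your route, if completed, would give an independent derivation and would in particular fix the overall normalisation, which the eigenvalue argument delegates to the quoted criterion.

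Precisely there, however, the proposal does not close as written. Take the pure $\WG_1$ piece: the stated formula gives $T^c_1=-\tfrac13\cyclic{XYZ}\f(2\lambda X,Y,Z)=-2\lambda\f$, and with $(X\hook\f)\phig=-3X\phig$ from Lemma \ref{lem:algdim7} one finds
\bdm
\tfrac14\bigl(X\hook(-2\lambda\f)\bigr)\phig\ =\ \tfrac{-2\lambda}{4}\,(-3X\phig)\ =\ \tfrac32\,\lambda X\phig ,
\edm
whereas your criterion demands $-\lambda X\phig$; the $3$-form that actually satisfies your equation is $\tfrac43\lambda\f$. So the step you describe as ``handled at once'' in fact misses the stated coefficient by an overall factor of $-\tfrac32$, and since the ratios $2:9:3$ are forced, the same factor would infect the $\WG_3$ and $\WG_4$ verifications. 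Before asserting that the three checks go through, you must either track down the source of this factor --- a convention for the torsion or for its lift to spinors other than the $\nabla^c\phig=\nabla\phig+\tfrac14(X\hook T)\phig$ used elsewhere in the paper --- or accept that your criterion produces a differently normalised answer; as it stands, criterion and formula are incompatible. Separately, the $\WG_3$ computation, which you rightly identify as the heart of the matter, is only described and never carried out, so the argument is at present a plan rather than a proof.
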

\begin{proof}
Consider the projections
\bdm\ba{c}
T^*M^7\otimes\g_2^\perp \stackrel{\kappa}{\lra} 
\Lambda^3(T^*M^7)\stackrel{\Theta}{\lra} T^*M^7\otimes\g_2^\perp\\ 
\f(SX,Y,Z)\stackrel{\kappa}{\ltto} \frac{1}{3}\cyclic{XYZ}\f(SX,Y,Z),\qquad
T \stackrel{\Theta}{\ltto} \sum_ie_i\otimes (e_i\hook T)_{\g_2^\perp}
\ea\edm
A little computation shows that the composite $\Theta\circ\kappa$ is the identity 
map, with eigenvalues $1, 0, 2/9, 2/3$ on the four summands $\WG_i$.
But from \cite{FI02} we know that if $-2\Gamma=\Theta(T)$ for some $3$-form $T$, 
then $T$ is the  characteristic torsion.
\end{proof}
%
\section{Hypersurface theory}\label{sec:subman}
%
Let $(\bar M^7,\bar g,\phigh)$ be a $\G$-manifold and $M^6$ a 
hypersurface with transverse unit direction $V$ 
\be\label{7=6+1} 
T\bar M^7=TM^6\oplus \lan V\ran.
\ee
By restriction the spinor bundle $\bar\Sigma$ of $\bar M^7$ gives a $\Spin(6)$-bundle 
$\Sigma$ over $M^6$, and so the Clifford multiplication $\cdot$ of $M^6$ reads 
$$X\cdot \phi=VX\phi$$ 
in terms of the one on $\bar M^7$ (whose symbol we suppress, 
as usual). 
This implies, in particular, that any 
$\sigma\in\Lambda^{2k}M^6\subset \Lambda^{2k}\bar M^7$ 
of even degree will satisfy $\sigma\cdot\phi=\sigma\phi$. 
This notation was used in \cite{BGM05} to describe almost 
Killing spinors (see Section \ref{sec:KswT}). 
Caution is needed because this is not the same as  $X\cdot\phi=X\phi$ 
described in Section \ref{sec:linal} for comparing Clifford multiplications.
\smallbreak

The second fundamental form $g(W(X),Y)$ of the immersion ($W$ is the 
Weingarten map) 
accounts for the difference between the two Riemannian structures, and 
in $\bar\Sigma$ we can compare
\bdm
\bar\nabla_X\phi=\nabla_X\phi-\frac{1}{2}V W(X)\phi.
\edm
A global spinor $\phigh$ on $\bar M^7$ (a $\G$-structure) restricts to a spinor 
$\phish$ on $M^6$ (an $\SU(3)$-structure).  The next lemma explains how both the 
almost complex structure and the spin structure are, essentially, induced by 
$\phigh$ and the unit normal $V$.
\begin{lem} \label{lem:dunno}
For any section $\phib\in\Sigma$ and any vector $X\in TM^6$
\begin{enumerate}
\item\  $V\phib=\js(\phib)$\smallbreak
\item\ $V X\phish=(\J X)\phish$.
\end{enumerate}
\end{lem}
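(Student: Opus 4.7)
The plan is to establish (1) by a direct Clifford-algebra computation that compares the spin structure of $M^6$ with the ambient one on $\bar M^7$, and then to deduce (2) as an immediate consequence of (1) combined with the defining formula $J(X)\phish := j(X\phish)$ recalled in Section~\ref{sec:linal}.

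For part (1), I would argue as follows. By its definition in Section~\ref{sec:linal}, the invariant complex structure $j$ on $\Delta$ is Clifford multiplication by the volume element $e_1\cdots e_6$ of $\R^6$. Under the splitting \eqref{7=6+1} the spin bundle $\Sigma\to M^6$ arises by restriction from $\bar\Sigma\to\bar M^7$, so its intrinsic Clifford action is given by $X\cdot_{M^6}\phib = V X\phib$ in the ambient $\G$-algebra. Choosing a local orthonormal frame $e_1,\dots,e_6$ of $M^6$ one therefore has
\begin{displaymath}
 j(\phib)\ =\ (Ve_1)(Ve_2)\cdots(Ve_6)\,\phib .
\end{displaymath}
The orthogonality $V\perp e_i$ gives $Ve_i=-e_iV$, and together with $V\cdot V=-1$ this yields $(Ve_i)(Ve_j)=e_ie_j$ for any pair $i\neq j$. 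Collapsing the three pairs of factors reduces the right-hand side to $e_1e_2\cdots e_6\,\phib$ inside the Clifford algebra of $\bar M^7$. Finally, setting $e_7:=V$, the seven-dimensional volume element $e_1\cdots e_7$ acts as a scalar on $\bar\Sigma$, and with the conventions fixed in Section~\ref{sec:linal} by the explicit basis on p.~\pageref{SU3basis} this scalar is such that $e_1\cdots e_6$ coincides with Clifford multiplication by $V$ on spinors, giving $j(\phib) = V\phib$.

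For part (2), I would simply apply (1) to the spinor $X\phish\in\Sigma$ in place of $\phib$, obtaining $V(X\phish)=j(X\phish)$. The right-hand side equals $J(X)\phish$ by the very definition of $J$ in Section~\ref{sec:linal}, and the associativity of Clifford multiplication lets us read the left-hand side as $VX\phish$, yielding the claim.

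The main obstacle I anticipate is the sign bookkeeping in the last step of (1): the scalar by which the $7$-dimensional volume element acts on $\bar\Sigma$ depends both on the orientation of $\bar M^7$ (i.e.\ on whether $V$ is the outward or inward unit normal, and on the corresponding induced orientation on $M^6$) and on the choice of irreducible module over $\mrm{Cl}(7)\cong\End(\R^8)\oplus\End(\R^8)$. One has to verify, by tracing through the basis specified on p.~\pageref{SU3basis}, that these conventions are compatible so that $e_1\cdots e_6\equiv +V$, rather than $-V$, on $\bar\Sigma$; once this sign is pinned down the rest is routine Clifford algebra.
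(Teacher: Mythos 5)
Your argument is correct and is essentially the paper's own proof: both reduce the lemma to the fact that the seven-dimensional volume element $\sigma_7=e_1\cdots e_6\, V$ acts as $-\mathrm{Id}$ on the restricted spin bundle, which is exactly the sign you defer to a convention check and which the paper simply asserts as $\sigma_7\phib=-\phib$ before concluding $V\js(X\phis)=\sigma_7(X\phis)=-X\phis$. Your intermediate step $\js(\phib)=(Ve_1)\cdots(Ve_6)\phib=e_1\cdots e_6\,\phib$ only makes the bookkeeping between the induced and ambient Clifford multiplications more explicit; the substance of the two arguments is identical.
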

\begin{proof}
The volume form $\sigma_7$ satisfies $\sigma_7\phib=-\phib$ for 
any $\phib\in\Sigma$. 
Therefore $V\js(X\phis)=\sigma_7(X\phis)=-X\phis$.
\end{proof}
This lemma is, at the level of differential forms, 
prescribing the rule $V\hook \f =-\omega$.
\begin{prop}
With respect to decomposition \eqref{7=6+1} the intrinsic $\G$-endomorphism of 
$\bar M^7$ has the form
\be\label{eq:Sbarintermsofs}
\bar S=\begin{bmatrix}
        \J S-\frac{1}{2}\J W&*\\
	\eta &**
       \end{bmatrix}
\ee
where $(S,\eta)$ are the intrinsic tensors of $M^6$, $\J$ the almost complex 
structure, $W$ the Weingarten map of the immersion.
\end{prop}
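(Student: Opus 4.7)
The plan is to compare the Gauss-type identity relating the ambient and intrinsic spin connections with the two spinorial field equations, one on $\bar M^7$ and one on $M^6$, and then read off the components of $\bar S(X)$ along the splitting $T\bar M^7 = TM^6 \oplus \langle V\rangle$. First I would restrict the defining equation $\bar\nabla_X \phi = \bar S(X)\phi$ to vectors $X \in TM^6$, and use the formula
\[
\bar\nabla_X \phi \;=\; \nabla_X \phi \;-\; \tfrac{1}{2}\, V W(X)\,\phi
\]
recalled at the start of the section to substitute the intrinsic $\SU(3)$ equation $\nabla_X\phi = \eta(X)\,j(\phi) + S(X)\cdot\phi$. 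At this point one must keep a careful eye on the two Clifford products: by the convention $X \cdot \phi = VX\phi$ from Section \ref{sec:subman}, the $\SU(3)$ term $S(X)\cdot\phi$ is really $V S(X)\phi$ in the ambient Clifford algebra.

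Next I would apply Lemma \ref{lem:dunno}: part (1) rewrites $j(\phi) = V\phi$, and part (2) identifies $VY\phi = (JY)\phi$ for every $Y\in TM^6$. Substituting these transforms the right-hand side into
\[
\bar\nabla_X \phi \;=\; \bigl(JS(X) - \tfrac{1}{2}JW(X)\bigr)\phi \;+\; \eta(X)\,V\phi,
\]
where the first summand is Clifford multiplication by a vector in $TM^6$ and the second is Clifford multiplication by a scalar multiple of $V$.

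Finally, I would decompose $\bar S(X) = \bar S(X)^\top + \bar S(X)^\perp\, V$ according to $T\bar M^7 = TM^6 \oplus \langle V\rangle$ and compare with the expression above. Because the map $Z \mapsto Z\phi$ from $T\bar M^7$ into $\phi^\perp \subset \bar\Sigma$ is injective (this is the decomposition \eqref{G2-dec}), matching the tangential parts gives $\bar S(X)^\top = JS(X) - \tfrac{1}{2}JW(X)$, and matching the coefficient of $V\phi$ gives $\bar S(X)^\perp = \eta(X)$. This is precisely the first column of the matrix (\ref{eq:Sbarintermsofs}); the entries $*$ and $**$ correspond to $\bar S(V)$, which involves the normal derivative of $\phi$ and is therefore not determined by the intrinsic data $(S,\eta,W)$ of the hypersurface.

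The main obstacle is purely bookkeeping: tracking consistently the two different Clifford multiplications on $M^6$ and $\bar M^7$, and transporting the two identifications of Lemma \ref{lem:dunno} through the computation. Once this is done, no further structural input is required beyond the Gauss-type formula for the spin connection and the uniqueness of the decomposition $(TM^6)\phi \oplus \langle V\phi\rangle \subset \phi^\perp$.
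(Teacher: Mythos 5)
Your argument is correct and is essentially identical to the paper's proof: both restrict $\bar\nabla_X\phi=\bar S(X)\phi$ to $X\in TM^6$, insert the Gauss formula $\bar\nabla_X\phi=\nabla_X\phi-\tfrac12 VW(X)\phi$, rewrite the intrinsic $\SU(3)$ equation via the two Clifford products and Lemma \ref{lem:dunno}, and read off the first column of $\bar S$. Your closing remark that the starred entries encode $\bar S(V)$, hence the normal derivative of $\phi$, matches the paper's comment following the proposition.
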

\begin{proof}
This result was first proved in \cite{CS06} using the Cartan-K\"ahler 
machinery. Our 
argument is much simpler: the definitions imply 
$\nabla_X\phish=V S(X)\phi+\eta(X)V\phish$, and invoking 
Lemma \ref{lem:dunno}  we infer $\bar\nabla_X\phigh=\nabla_X\phigh-\tfrac{1}{2}V W(X) \phigh=
\J S(X)\phigh-\tfrac{1}{2}\J W(X) \phigh+ \eta(X)V\phigh$.
\end{proof}
The starred terms in \eqref{eq:Sbarintermsofs} should point to the half-obvious fact
that the derivative $\nabla_V\phi$ cannot be reconstructed from $S$ and $\eta$. 
As a matter of fact, later we will show that the bottom row of $\bar S$ 
is controlled by the product $(\nabla\phigh, V\phigh)$, so that the entry $**$ 
vanishes when $\nabla_V\phigh=0$.

\bigbreak
Now we are ready for the main results, which explain how to go from $M^6$ to 
$\bar M^7$ (Theorem \ref{thm:tyofsu1}) and backwards (Theorem \ref{thm:tyofsu2}). 
The run-up to those requires a preparatory definition. 
\smallbreak 

Recall that the Weingarten endomorphism $W$ is symmetric if 
the $\SU(3)$-structure is half-flat 
(Lemma \ref{lem:classsu3}). Motivated by this
\begin{dfn} 
We say that a 
hypersurface $M^6\subset\bar M^7$ has 
\begin{itemize}
 \item[(0)] \emph{type zero} if $W$ is the trivial map (meaning  
$\bar \nabla=\nabla$),\smallbreak
 \item[(I)] \emph{type one} if $W$ is of class $\WS_{\bar1}$,\smallbreak
 \item[(II)] \emph{type two} if $W$ is of class $\WS_{\bar2}$,\smallbreak
 \item[(III)] \emph{type three} if $W$ is of class $\WS_3$.
\end{itemize}
\end{dfn}
\bigbreak
Due to the freedom in choosing entries in \eqref{eq:Sbarintermsofs}, we will take 
the easiest option (probably also the most meaningful one, geometrically speaking) 
and consider only embeddings where  $\nabla_V\phigh=0$. 
\begin{thm}\label{thm:tyofsu1}
Embed $(M^6,g,\phish)$ in some $(\bar M^7,\bar g,\phigh)$ as in 
\eqref{7=6+1}, and suppose
the $\G$-structure is parallel in the normal direction: $\bar\nabla_V\phigh=0$.

Then the classes $\WG_\alpha$ 
of $(\bar M^7,\bar g,\phigh)$ depend on the column position (the class of $M^6$) 
and the row position (the Weingarten type of $M^6$) as in the table
\begin{center}
\begin{tabular}{c||c|c|c|c|c|c|c}
{} & $\WS_{1}$ & $\WS_{\bar 1}$ & $\WS_{2}$ & $\WS_{\bar 2}$ & $\WS_3$ & $\WS_4$ & $\WS_5$ \\
\hline\hline
0 & $\WG_{13}$& $\WG_4$ & $\WG_3$&$\WG_2$ &  $\WG_3$&  $\WG_{24} $&$\WG_{234}$ \\
I & $\WG_{134}$& $\WG_4$& $\WG_{34}$& $\WG_{24}$& $\WG_{34}$& $\WG_{24}$& $\WG_{234}$\\
II & $\WG_{123}$ & $\WG_{24}$ & $\WG_{23}$ & $\WG_2$& $\WG_{23}$& $\WG_{24} $& $\WG_{234}$\\
III & $\WG_{13}$ & $\WG_{34}$ & $\WG_3$ & $\WG_{23}$ & $\WG_3$ & $\WG_{234}$ & $\WG_{234}$\\
\end{tabular}
\end{center}
\end{thm}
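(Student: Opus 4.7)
The strategy is to determine the $\G$-class of $\bar M^7$ by decomposing $\bar S$ according to the $\G$-invariant splitting $\End(\R^7)=\R\oplus S^2_0\R^7\oplus\g_2\oplus\R^7$ of Lemma \ref{lem:classg2}.

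First, the assumption $\bar\nabla_V\phigh=0$ combined with \eqref{dfn.intrinsic-endom} gives $\bar S(V)\cdot\phigh=0$; since Clifford multiplication by a non-zero vector is injective on $\Delta$, we deduce $\bar S(V)=0$. The two starred entries of \eqref{eq:Sbarintermsofs} therefore vanish and
\[
\bar S\;=\;\begin{pmatrix}\J S-\tfrac{1}{2}\J W & 0\\ \eta & 0\end{pmatrix}.
\]
From Lemma \ref{lem:classsu3} one reads off the $\SU(3)$-submodule of $\End(\R^6)$ in which $\J S$ (and similarly $\J W$) lies: multiplication by $\J$ interchanges the $\WS_2$- and $\WS_{\bar2}$-modules, stabilises the $\WS_3$-module and $\WS_4=\u(3)^\perp$, and sends $\WS_1$, $\WS_{\bar1}$ to $\R\,\Id_6$, $\R\J$ respectively.

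Second, using $V\hook\f=-\omega$ and the identity $X\hook\f=X\hook\psi+(\J X)^\flat\wedge V^*$, one identifies how each $\SU(3)$-summand of $\End(\R^6)$ and of the off-diagonal row $\eta$ sits in the $\G$-decomposition of $\End(\R^7)$:
\begin{enumerate}
\item $\R\,\Id_6$ produces a $\WG_1$-piece together with an obligatory $\WG_3$-correction, since trace-freeness on $\R^7$ forces a compensating term on the $V$-direction;
\item $\su(3)\subset\g_2$ gives a pure $\WG_2$-contribution;
\item $\R\J\subset\g_2^\perp$ gives a pure $\WG_4$-contribution;
\item symmetric trace-free endomorphisms of $\R^6$ (namely the $\WS_{\bar2}$- and $\WS_3$-modules) contribute purely to $\WG_3$;
\item the module $\u(3)^\perp$, and analogously the skew part $\eta\wedge V^*$ of the off-diagonal block, each have non-trivial projection onto both $\g_2$ and $\g_2^\perp$, hence contribute $\WG_{24}$;
\item the symmetric part $\eta\otimes V^*+V^*\otimes\eta$ of the off-diagonal block contributes to $\WG_3$.
\end{enumerate}
Summing these contributions for each of the $28$ cells of the table reproduces the stated $\G$-class.

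The main obstacle is item (5): showing that both $\u(3)^\perp$ and $\R^6\wedge V^*$ project non-trivially onto each of $\g_2$ and $\g_2^\perp$. This follows from the identity $X\hook\f=X\hook\psi+(\J X)^\flat\wedge V^*$, which exhibits $\{X\hook\f:X\in\R^6\}\subset\g_2^\perp$ as a diagonal $6$-dimensional subspace inside $\u(3)^\perp\oplus(\R^6\wedge V^*)$; a rank argument then forces both partial projections onto $\g_2^\perp$ (and by orthogonality onto $\g_2$) to be full-rank. Moreover, the two sources in (5) arise from mutually exclusive classes ($\WS_4$ versus $\WS_5$), so they never coexist in any cell of the table and no cancellation between $\u(3)^\perp$- and $(\R^6\wedge V^*)$-contributions can spoil the class; summing the pieces (1)--(6) yields precisely the listed entries.
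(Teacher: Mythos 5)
Your proposal is correct and follows essentially the same route as the paper: both reduce the problem to determining the $\G$-type of the block endomorphism $\left[\begin{smallmatrix}\J A&0\\\theta&0\end{smallmatrix}\right]$ for each pure $\SU(3)$-class of $(A,\theta)$ (using Lemmas \ref{lem:classsu3} and \ref{lem:classg2}) and then superpose the contributions of $S$ and $-\tfrac{1}{2}W$. Your items (1)--(6), in particular the diagonal-embedding argument showing that the $\WS_4$-module and $\R^6\wedge V^*$ each project non-trivially onto both $\g_2$ and $\g_2^\perp$, merely spell out what the paper dismisses as ``similar, easy arguments''.
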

\begin{proof}
Let $A$ be an endomorphism of $\R^6$ and $\theta$ a covector. Then 
$\bar A=\left[\begin{smallmatrix}\J A&0\\\theta&0\end{smallmatrix}\right]$ 
 is of type $\WG_4$ iff $\theta=0$ and $A$ is a multiple
of the identity, since $\J$ is given by $g(X,\J Y)=\frac{1}{2}\f(V,X,Y)$. 

With similar, easy arguments one shows that the type of 
$\bar A = \left[\begin{smallmatrix}\J A&0\\\theta&0\end{smallmatrix}\right]$ 
is determined by the class of the intrinsic tensors $(A,\theta)$ on $M^6$  
in the following way:
\begin{center}
\begin{tabular}{r||c|c|c|c|c|c|c}
$(A,\theta)\in$ & $\WS_1$ & $\WS_{\bar1}$ & $\WS_ 2$ & $\WS_{\bar2}$ & 
$\WS_ 3$ & $\WS_ 4$ & $\WS_ 5$ \\
\hline
$(\J A,\theta)\in$ & $\WS_{\bar1}$ & $\WS_1$ & $\WS_{\bar2}$ & $\WS_2$ & 
$\WS_ 3$ & $\WS_ 4$ & $\WS_ 5$\\
\hline
$\bar A\in$ & $\WG_{13}$ & $\WG_4$ & $\WG_3$ & $\WG_2$ & $\WG_3$ & $\WG_{24}$ 
& $\WG_{234}$
\end{tabular}
\end{center}
Now the theorem can be proved thus: consider for example 
 $(S,\eta)$ of class $\WS_3$ on a hypersurface of type I. 
 Then $\left[\begin{smallmatrix}\J S&0\\\eta&0\end{smallmatrix}\right]$ 
has class  $\WG_3$, and since $W$ is a multiple of the identity 
$\left[\begin{smallmatrix}\J W&0\\0&0\end{smallmatrix}\right]$ has class  
$\WG_4$. This immediately gives 
$\bar S =\left[\begin{smallmatrix}\J S-\frac{1}{2}\J W&0\\\eta 
&0\end{smallmatrix}\right]$, so the class of the $\G$-structure
 is $\WG_{34}$. All other cases are analogous.
\end{proof}
With that in place we can now do the opposite: start from the ambient 
space $(\bar M^7,\bar g,\phi)$ 
and infer the structure of its codimension-one submanifolds $M^6$. 
By inverting formula \eqref{eq:Sbarintermsofs} we immediately see 
\bdm
S=-\J\bar S\big|_{TM^6}+\frac{1}{2}W,\qq \eta(X)=g(\bar S X,V)
\edm
for any $X\in TM^6$. 
The next, final result on hypersurfaces can be found, in a 
different form, in \cite[Sect. 4]{C06}. 
\begin{thm}\label{thm:tyofsu2}
Let $(\bar M^7,\bar g,\phi)$ be a Riemannian spin manifold of 
class $\WG_\alpha$. Then a hypersurface $M^6$ with normal 
$V\in T\bar M^7$ carries an induced spin structure $\phi$: its class is an 
entry in the matrix below that is determined by the column (Weingarten 
type) and row position ($\WG_\alpha$)
\begin{center}
\begin{tabular}{c||c|c|c|c}
{} & $\WG_1$ & $\WG_2$ & $\WG_3$ & $\WG_4$ \\
\hline\hline
$0$ & $\WS_{1}$& $\WS_{\ol{1}\ol{2}45}$ & $\WS_{1235}$&$\WS_{\ol{1}45}$ \\

$I$ & $\WS_{1\ol{1}}$ & $\WS_{\ol{1}\ol{2}45}$ & $\WS_{1\ol{1}235}$ &$\WS_{\ol{1}45}$\\

$II$ & $\WS_{1\ol{2}}$& $\WS_{\ol{1}\ol{2}45}$ & $\WS_{12\ol{2}35}$& $\WS_{\ol{1}\ol{2}45}$\\

$III$ & $\WS_{13}$ &$\WS_{\ol{1}\ol{2}345}$  & $\WS_{1235}$ &$\WS_{\ol{1}345}$ \\
\end{tabular}
\end{center}
\end{thm}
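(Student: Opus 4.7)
The plan is to invert the matrix description \eqref{eq:Sbarintermsofs} of $\bar S$, which immediately yields
\begin{equation*}
S(X) \, =\, -\J\,\bar S(X)|_{TM^6} + \tfrac{1}{2}W(X), \qquad \eta(X) \, =\, \bar g(\bar S X, V),
\end{equation*}
for $X \in TM^6$. This expresses the intrinsic $\SU(3)$-datum as the sum of an ambient contribution $(-\J\bar S|_{TM^6},\, \bar g(\bar S \cdot, V))$ derived from the $\G$-structure, and a Weingarten correction $\tfrac{1}{2}W$ depending only on the immersion type. The proof then splits into two independent tasks: (i)~determine the $\SU(3)$-class of the ambient contribution for each pure Fern\'andez--Gray class $\WG_\alpha$; (ii)~superimpose the Weingarten contribution using Lemma \ref{lem:classsu3}.

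For step (i) I would branch each $\G$-module $\WG_\alpha \subset \End(T\bar M^7)$ under the residual $\SU(3)$-action induced by the splitting $T\bar M^7 = TM^6 \oplus \langle V\rangle$, isolating tangential and normal parts of $\bar S$ separately. The key tool is the involution established in the proof of Theorem \ref{thm:tyofsu1}: post-composition with $-\J$ sends $\WS_1 \leftrightarrow \WS_{\bar 1}$ and $\WS_2 \leftrightarrow \WS_{\bar 2}$ while fixing $\WS_3$ and $\WS_4$. The easy classes are $\WG_1 = \R\cdot\Id$, for which $-\J\,\bar S|_{TM^6} = -\lambda\,\J \in \WS_1$ with $\eta = 0$; and $\WG_4$, where decomposing the defining vector $V' = v_0 V + V'_\top$ and using $V\hook\bar\f = -\omega$ from Lemma \ref{lem:dunno} gives a $\WS_{\bar 1}$-summand from $v_0 V$, a $\WS_4$-contribution $V'_\top \hook\psis$ from the tangential direction, and a nonzero $\WS_5$ one-form proportional to $V'_\top$. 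For $\WG_3 = S^2_0 T\bar M^7$, the symmetric traceless $\bar S$ decomposes into a traceless-symmetric tangential block, a scalar $s$ (corresponding to the $\mathrm{diag}(\Id_{TM^6},-6)$-direction), and an off-diagonal vector; applying $-\J$ splits the tangential block into its $\J$-commuting and $\J$-anticommuting parts (landing in $\WS_2$ and $\WS_3$ respectively), the scalar contributes $\WS_1$, and the vector yields $\WS_5$, giving $\WS_{1235}$. Finally, $\WG_2 = \g_2$ requires solving $\bar S \cdot \phi = 0$ explicitly via Lemma \ref{lem:algrechnungendim6}, parametrizing each element of $\g_2$ as $A_{\su(3)} + (X_0 \hook\psis) + V \wedge(-2X_0)$ with $A_{\su(3)} \in \su(3)$ and $X_0 \in TM^6$; the tangential part then lives in $\su(3) \oplus \Lambda^2_6$ (producing $\WS_{\bar 2} \oplus \WS_4$ after $-\J$) and the normal part gives $\eta = 2X_0^\flat \in \WS_5$, landing in $\WS_{\bar 1 \bar 2 4 5}$.

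Step (ii) is straightforward from Lemma \ref{lem:classsu3}: type $0$ adds nothing, while types I, II, III contribute $\WS_{\bar 1}$, $\WS_{\bar 2}$, resp.\ $\WS_3$ via $\tfrac{1}{2}W$. Combining the ambient class from step (i) with the appropriate Weingarten summand yields the sixteen entries of the table.

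The main obstacle is the $\WG_2$-case, where the spinorial equation $\bar S \cdot \phi = 0$ inside $\spin(7)$ must be unpacked carefully in order to identify $\g_2 \subset \so(7)$ as a concrete 14-dimensional subspace of $\so(6) \oplus (V \wedge TM^6)$; here Lemma \ref{lem:algrechnungendim6}, together with the identifications of the $\omega$, $\su(3)$, and $\Lambda^2_6$ summands of $\so(6)$ through their action on $\phi$, does all the heavy lifting. Once this algebraic model is in place, the remaining entries follow by bookkeeping based on the branching rules and on the involution behavior of $-\J$ on the Gray--Hervella modules.
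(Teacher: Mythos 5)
Your proposal is correct and follows essentially the same route as the paper's proof: invert the block formula \eqref{eq:Sbarintermsofs} to get $S=-\J\bar S|_{TM^6}+\tfrac{1}{2}W$ and $\eta=\bar g(\bar S\,\cdot\,,V)$, determine the $\SU(3)$-class of the ambient contribution for each pure class $\WG_\alpha$ (using that composition with $-\J$ swaps $\WS_1\leftrightarrow\WS_{\bar1}$ and $\WS_2\leftrightarrow\WS_{\bar2}$ while fixing $\WS_3,\WS_4$), and then superimpose the Weingarten term via Lemma \ref{lem:classsu3}. The only real divergence is in the $\WG_2$ column, where the paper simply invokes skew-symmetry of the tangential block whereas you parametrise $\g_2$ explicitly; your finer analysis actually shows that the tangential block has no $\R\omega$-component, so the ambient contribution lies in $\WS_{\bar{2}45}$ --- consistent with, though sharper than, the stated entry $\WS_{\bar{1}\bar{2}45}$.
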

\begin{proof}
In order to proceed as in Theorem \ref{thm:tyofsu1}, we prove that the class of 
an endomorphism 
$\bar A=\left[\begin{smallmatrix}\J A&*\\\theta &*\end{smallmatrix}\right]$ 
on $\R^7$ determines the class of $(A,\theta)$ on $\R^6$  in the 
following way:
\begin{center}
\begin{tabular}{r||c|c|c|c}
$\bar A\in$ & $\WG_1$ & $\WG_2$ & $\WG_3$ & $\WG_{4}$\\
\hline
$(A,\theta)\in$ & $\WS_1$ & $\WS_{\ol{1}\ol{2}45}$ & $\WS^{ }_{1235}$ & $\WS_{\ol{1}45}$
\end{tabular}
\end{center}

If $\bar A\in \WG_1$ we have $\bar A=\lambda\,\Id$ and hence 
$\theta=0$ and $A=\lambda \J$.
 
If $\bar A\in \WG_2$ then $\J A$ is skew-symmetric, and $A$ has type 
$\WS_{\ol{1}\ol{2}4}$. 

If $\bar A$ is of type $\WG_3$ it follows 
$\bar S=\left[\begin{smallmatrix}\J A&\eta\\ \eta &-\tr(\J A)
\end{smallmatrix}\right]$ 
for some symmetric $\J A$. Therefore $JA$ is of type $\WS_{\ol{1}\ol{2}3}$,  
implying the type $\WS_{123}$ for $A$.

Suppose $\bar A\in \WG_4$, so there is a vector $Z$ such that 
$g(X,\bar AY)=\f(Z,X,Y)$, whence 
\bdm
(XYZ\phi,\phi)=(\bar AY\phi,X\phi)
\edm
for every $X,Y\in \R^7$. Restrict this equation to $X,Y\in\R^6$ and put 
$Z=\lambda V+Z_1, Z_1\in\R^6$. Then $\J A=\lambda \J+A_1$ with 
$(XYZ_1\phi,\phi)=(A_1Y\phi,X\phi)$. Since $A_1$ is skew we have
\bea[*]
g(X,A_1\J Y)&=&(Z_1 X \J Y \phi,\phi)=(Z_1 X V Y \phi,\phi) 
= -(Z_1 Y V X \phi,\phi)\\
&=&-(Z_1 Y \J X \phi,\phi) =-g(Y,A_1\J X)=-g(X,\J A_1Y),
\eea[*]
so $A_1\J=-\J A_1$ and $A_1$ has type $\WS_4$. Eventually, $\J A\in\WS_{14}$.
\end{proof}
The above table explains why we cannot have a $\WG_1$-manifold 
if the derivative of $\phigh$ along $V$ vanishes. 
Moreover, in case $\nabla_V\phigh=0$ the $\WS_5$ component 
disappears everywhere, simplifying the matter a little.

Theorems \ref{thm:tyofsu1} and \ref{thm:tyofsu2} amend a petty mistake in 
\cite[Thm 3.1]{CS02} that was due to a (too) special choice of local basis.
%
\section{Spin cones}\label{sec:spincones}
%
We wish to explain how one can construct $\G$-structures, of any desired 
class, on cones over an $\SU(3)$-manifold. The recipe, which is a generalisation 
of the material presented in \cite{AH13}, goes as follows. 

As usual, start with $(M^6,g,\phisc)$ with intrinsic torsion $(S,\eta)$. 
Choose a complex-valued function $h :I\ra S^1\subset \C$ 
defined on some real interval $I$. Setting
\bdm
\phisc_t\ :=\ h(t)\phisc\ :=\  \Re h(t)\phisc + \Im h(t)\js(\phisc)
\edm
gives a new family of $\SU(3)$-structures on $M^6$ depending on 
$t\in I$, and $\jphisc_t=\js(\phisc_t)
=h(t)\jphisc$.
The product of a complex number $a\in\C$ with an 
endomorphism $A\in \End(TM)$ is defined as $aA=(\Re a)A+(\Im a)\J A$. 
Then $h(A(X)\phib)=(hA)(X)\phib=A(X)\bar h\phib$ 
for any spinor $\phib$.
The first observation is that the intrinsic torsion of $(M^6,g,\phisc_t)$ 
is given by $(h^2S,\eta)$ (cf. Remark \ref{NB:W1}(ii), with $f=h$ 
constant on $M^6$), because
\bea[*]
\nabla_X\phisc_t&=&h\nabla_X\phisc=h(S(X)\cdot\phisc)+h\eta(X)\jphisc \\
&=& (hS)(X)\cdot(\bar h h\phisc)+\eta(X)\jphisc_t
= (h^2S)(X)\cdot\phisc_t+\eta(X)\jphisc_t.
\eea[*]
If we rescale the metric conformally by some positive function 
$f:I\ra \R_+$, we may consider 
$$M^6_t:=(M^6,f(t)^2g,\phisc_t).$$ 
Note that $M^6$ and $M^6_t$ have the same Levi-Civita connection and 
spin bundle $\Sigma$, but 
distinct Clifford multiplications $\cdot\ ,\  \cdot_t\ $, albeit related by
$X\cdot\phib=\frac{1}{f(t)}X\cdot_t\phib, \forall \phib$. 
As 
\bdm
\nabla_X\phisc_t\ =\  h^2S(X)\cdot\phisc_t+\eta(X)\jphisc_t
\ = \ \tfrac{h^2}{f}S(X)\cdot_t\phisc_t+\eta(X)\jphisc_t ,
\edm
the intrinsic torsion of $M^6_t$ gets rescaled as $(\frac{h^2}{f}S,\eta)$. 
\begin{dfn}
The metric cone  
$$(\bar M^7,\bar g)=(M^6\times I,f(t)^2g+dt^2)$$
equipped with spin structure $\phigc:=\phisc_t$
 will be referred to as  the {\it spin cone} over $M^6$. 
The article \cite{AH13} considered a version of this construction 
where $f(t)=t$. 
\end{dfn}
The Levi-Civita connection $\bar\nabla^t$ of the cone reads
\bdm
\bar\nabla_XY=\nabla_XY-\frac{f'(t)}{f(t)}\bar g(X,Y)\del_t
\edm
for $X,Y\in TM^6$, whence the Weingarten map is $W=-\frac{f'}{f}\Id$. 
Furthermore, 
\bdm
\bar\nabla_{\del_t}\phigc=\bar\nabla_{\del_t}h\phisc=h'\phisc=-ih'\jphisc=-i\frac{h'}{h}hV\phisc=-i\frac{h'}{h}V\phigc.
\edm
To sum up, the intrinsic torsion of $\bar M^7$ is encoded in 
\bdm
\bar S=
\begin{bmatrix}
\frac{h^2}{f}\J S+\frac{f'}{2f}\J & 0\\ 
\eta & -i \tfrac{h'}{h} 
\end{bmatrix} .
\edm
By decomposing 
$S=\lambda \J+\mu \Id+R\in \WS_{1}\oplus\WS_{\bar1}\oplus\WS_{2\bar{2}345}$, 
the upper-left term in the matrix $\bar S$ can be written as 
\bdm
\frac{-\lambda\Im h^2+\mu \Re h^2+f'/2}{f}\J
-\frac{\lambda \Re h^2 +\mu\Im h^2}{f} \Id 
+ \frac{\Re h^2}{f}\J R-\frac{\Im h^2}{f}R.
\edm
Let us see what happens for specific choices of hypersurface structure.\\

Suppose we require $\bar M^7$ to be a nearly integrable $\G$-manifold 
(class $\WG_1$): since $\bar S$ is then 
a multiple of the identity, we need ${h'}/{h}$ to be constant, so 
$h(t)=\exp(i(ct+d)),\ c,d\in\R$. The easiest instance of this situation is the
following:

\smallskip
{\bf The sine cone}. Start with an $\SU(3)$-manifold $(M^6,g,\phisc)$ of type 
$\WS_{\bar1}$ with $S=-\frac{1}{2}\Id$. The choice $h=\textrm{e}^{it/2}$ 
produces a cone 
\bdm
(\, M^6\times (0,\pi),\, \sin(t)^2g+dt^2,\, e^{it/2}\phisc\, )
\edm
for which
$\bar S=\frac{1}{2}\Id$. This construction was 
introduced in \cite{ADHL03}, see also \cite{FIVU08, S09}.

\smallskip
{\bf Cones of pure class}. To obtain other classes of $\G$-manifolds  
we start this time by fixing the function $h=1$, so that $\phigc=\phisc$ and
\bdm
\bar S=
\begin{bmatrix}
\frac{\mu+\frac{1}{2}f'}{f}\J-\frac{\lambda}{f}\Id+\frac{1}{f}\J R & 0\\ \eta &0
\end{bmatrix},
\edm
and only now we prescribe the $\SU(3)$-structure.
\begin{itemize}
\item[a)] Take $M^6$ to be $\WS_{\ol{1}\ol{2}}$, say $S=\mu\,\Id+R$, and  $\mu<0$ constant: 
the cone 
\bdm
(\, M^6\times\R_+,\, 4\mu^2 t^2g+dt^2,\, \phisc\, )
\edm
has 
$\bar S=
\left[\begin{smallmatrix} -\frac{1}{2\mu t}\J R& 0\\ 0 &0  
\end{smallmatrix}\right] $, and so it carries a calibrated 
$\G$-structure (class $\WG_2$).\\
\item[b)] On $M^6$ of type $\WS_{\ol{1}23}$ with $\mu<0$ constant, we can build 
the same cone as in a),  
but now the resulting $\G$-structure will be balanced (class  $\WG_3$).\\
\item[c)] Take a $\WS_{\bar1}$-manifold ($S=\mu\,\Id$). 
Since $\left[\begin{smallmatrix}k(t)\J& 0\\ 0 
&0\end{smallmatrix}\right]$ is of type $\WG_4$ irrespective of the 
map $k(t)$, the cone
\bdm
(\, M^6\times I,\, f(t)^2g+dt^2,\, \phisc \, )
\edm
is always $\WG_4$, since $R$ and $\lambda$ 
vanish. When $\mu<0$, the special choice $f(t)=-2\mu t$ will 
additionally give $\bar S= 0$.
This Ansatz was used in \cite{Baer93} to manufacture a parallel $\G$-structure
(trivial class $\{0\}$) on the cone.
\end{itemize}

Other choices of $\SU(3)$-class on $M^6$ and functions $h,\, f$
will allow, along these lines, to construct any desired $\G$-class on a 
suitable cone.
%
%
\section{Killing spinors with torsion}\label{sec:KswT}
%
%
Let $(\bar M^7,\bar g,\phigk)$ be a $\G$-manifold with characteristic 
connection $\bar \nabla^c$ and torsion $\bar T$, and suppose $(M^6,g,\phisk)$ 
is a submanifold of type I or III such that $V\hook \bar T=0$, 
cf. \eqref{7=6+1}. 
The latter equation warrants that $\bar T$ restricts to a $3$-form on $M^6$; 
observe that the condition is
more restrictive than assuming $\bar\nabla_V\phi=0$, which implies only
$(V\hook \bar T) \phi=0$. \smallbreak

We decompose the Weingarten map 
$W=\mu\, \Id+W_3$ with $JW_3=-W_3J$ and prove
\begin{lem}
The differential form 
\bdm
L(X,Y,Z):=-\cyclic{XYZ}\psis(W_3(X),Y,Z)-\mu\psis(X,Y,Z)
\edm
 satisfies $(X\hook L)\phisk=-2W(X)\phisk$.
\end{lem}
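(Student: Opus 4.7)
Use the splitting $W=\mu\Id+W_3$ to write $L=L_\mu+L_3$ with
\[
L_\mu:=-\mu\psis,\qquad L_3(X,Y,Z):=-\cyclic{XYZ}\psis(W_3(X),Y,Z),
\]
and correspondingly $-2W(X)\phisk=-2\mu X\phisk-2W_3(X)\phisk$. It is then enough to establish the two identities
\[
(X\hook L_\mu)\phisk=-2\mu X\phisk\qquad\text{and}\qquad(X\hook L_3)\phisk=-2W_3(X)\phisk.
\]

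The first identity is immediate from Lemma \ref{lem:algrechnungendim6}, which yields $(X\hook\psis)\phisk=2X\phisk$; hence $(X\hook L_\mu)\phisk=-\mu(X\hook\psis)\phisk=-2\mu X\phisk$.

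The crux is the second identity, and the cleanest approach borrows from the characteristic-connection machinery of Section~\ref{sec:tysu3}. Namely, one recognises $2L$ as the characteristic torsion of the (abstract) $\SU(3)$-structure of class $\WS_{\bar13}$ with intrinsic data $(S,\eta)=(\mu\Id+W_3,0)$: the explicit torsion formula stated just after Theorem \ref{thm:charconnonSU}, evaluated with $\lambda=0$ and $S_{34}=W_3$, reads
\[
T\ =\ -2\mu\psis-2\cyclic{XYZ}\psis(W_3(X),Y,Z)\ =\ 2L.
\]
By Lemma \ref{lem.charconn-spinor} the characteristic connection preserves the defining spinor, which after lifting to spinors says $0=\nabla^c_X\phisk=\nabla_X\phisk+\tfrac14(X\hook T)\phisk$. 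Substituting $\nabla_X\phisk=W(X)\phisk$ (the defining equation (\ref{spineq.SU3}) for the intrinsic endomorphism $W$) and $T=2L$ then delivers $(X\hook L)\phisk=-2W(X)\phisk$, as wanted.

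\textbf{Main obstacle.} The argument above borrows from a \emph{global} statement about characteristic connections in order to derive what is in the end a \emph{pointwise} algebraic identity in $(\phisk,\psis,W,X)$ on $\R^6$. This is legitimate provided one is willing to realise the required data on a local $\WS_{\bar13}$-model (which is standard); alternatively, a purely Clifford-algebraic proof works, using the identity $X\hook\alpha=-\tfrac12(X\alpha+\alpha X)$ for any 3-form $\alpha$ together with the anti-commutation $W_3J+JW_3=0$ and Lemma \ref{lem:algrechnungendim6} to reduce $L_3\cdot\phisk$ and $L_3\cdot(X\phisk)$ to Clifford multiples of $W_3(X)\phisk$. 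Either way the identity is universal, and in particular applies in the hypersurface setting where $W$ is the Weingarten map of $M^6\subset\bar M^7$.
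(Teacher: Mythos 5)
Your proof is correct in substance, but it takes a genuinely different route from the paper's. The paper argues purely algebraically with the two projection maps $\kappa:T^*M^6\otimes\su(3)^\perp\to\Lambda^3(T^*M^6)$ and $\Theta:\Lambda^3(T^*M^6)\to T^*M^6\otimes\su(3)^\perp$: it computes that $\Theta\circ\kappa$ acts with eigenvalue $1/3$ on the $\WS_3$-part and $1$ on the trace part, so that $\Theta(L)=-W\hook\psis$, and then uses the single algebraic fact that $\su(3)$ annihilates $\phisk$ (hence $(X\hook R)\phisk=\Theta(R)(X)\phisk$ for any $3$-form $R$) together with $(W(X)\hook\psis)\phisk=2W(X)\phisk$ from Lemma \ref{lem:algrechnungendim6}. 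This is a pointwise computation throughout. You instead recognise $2L$ as the characteristic torsion of an auxiliary $\SU(3)$-structure of class $\WS_{\bar13}$ with intrinsic data $(W,0)$ and quote the torsion formula plus Lemma \ref{lem.charconn-spinor}; the identification $T=2L$ and the conclusion $(X\hook L)\phisk=-2W(X)\phisk$ are both right, and there is no circularity since the characteristic-torsion theorem of Section \ref{sec:tysu3} does not depend on this lemma. The price of your route is the step you yourself flag: on the actual hypersurface the intrinsic endomorphism is $S$, not the Weingarten map $W$, so $\nabla_X\phisk=W(X)\phisk$ does not hold there, and you must realise $(W,0)$ as the intrinsic data of some local $\WS_{\bar13}$-model to turn the global statement into the needed pointwise identity. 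That realisation is standard (the intrinsic torsion at a point is unconstrained), but it is an extra moving part the paper's direct $\kappa,\Theta$ computation avoids; your sketched Clifford-algebraic alternative via $X\hook\alpha=-\tfrac12(X\alpha+\alpha X)$ would close the gap without it, and is essentially the paper's argument in disguise. Your splitting-off of the $-\mu\psis$ term via $(X\hook\psis)\phisk=2X\phisk$ is fine and matches the trace-part eigenvalue $1$ in the paper's computation.
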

\begin{proof}
In an arbitrary orthonormal basis $e_1,\ldots,e_6$ the torsion is 
$-\sum_i 
 (e_i\hook T)_{\su(3)^\perp}\otimes e_i=2\Gamma$, where 
$(e_i\hook T)_{\su(3)^\perp}$ denotes the projection of $e_i\lrcorner T$ under  
$\so(6)\ra\su(3)^\perp$. 
 It is not hard to see that the maps
\bdm\ba{c}
T^*M^6\otimes\su(3)^\perp \stackrel{\kappa}{\lra}\Lambda^3(T^*M^6)
\stackrel{\Theta}{\lra}T^*M^6\otimes\su(3)^\perp\\[1mm]
S\hook\psis-\frac{2}{3}\eta\otimes\omega\stackrel{\kappa}{\ltto}\frac{1}{3}
\cyclic\,(S\hook\psis-\frac{2}{3}\eta\otimes\omega), 
\quad T\stackrel{\Theta}{\ltto}\sum_ie_i\otimes (e_i\hook T)_{\su(3)^\perp}
\ea\edm
satisfy $\Theta\circ\kappa_{|\WS_3}=\frac{1}{3}\Id_{\WS_3}$ and 
$\Theta\circ\kappa_{|\WS_1}=\Id_{\WS_1}$. 
But since $\SU(3)$ is the stabiliser of $\phisk$, for any 
$R\in\Lambda^3T^*M^6$ we have $R(X)\,\phisk=\Theta(R)(X)\, \phisk$, so 
\bdm
(X\hook L)\,\phisk \ =\ -(\psis \hook W)\,\phisk\ =\ -2W(X)\,\phisk,
\edm
proving the lemma.
\end{proof}
For $X\in TM^6$ we have
\bdm
0\ =\ \bar\nabla_X^c\phigk=\bar \nabla_X\phigk+\tfrac{1}{4}(X\hook \bar T)\phigk
\ = \ \nabla_X\phisk+\tfrac{1}{4}(X\hook \bar T)\,\phisk
-\tfrac{1}{2}W(X)\,\phisk.
\edm
So if we define 
$$T:=\bar T_{|M^6}+L,$$ 
then $\nabla^c:=\nabla+T$ is characteristic for  $(M^6,g,\phisk)$. This 
means that if  $\bar M^7$ and $M^6$ admit characteristic connections, their 
difference must be $L$.
\begin{dfn}\label{def:gKST}
Consider the one-parameter family of metric connections 
\bdm
\nabla^s:=\nabla+2sT
\edm
passing through $\nabla^c$ at $s=1/4$ and $\nabla$ at the origin. 
A spinor $\phib$ is called 
a \textit{generalised Killing spinor with torsion} (gKST) if
\bdm
\nabla^s_X\phib=A(X)\,\phib
\edm
for some symmetric $A:TM^6\ra TM^6$. This notion captures 
many old acquaintances: taking $s=0$ will produce  
generalised Killing spinors (without torsion) \cite{BGM05, CS07}, 
and quasi-Killing spinors on Sasaki manifolds for special $A$ \cite{FK01}. 
Killing spinors with torsion correspond to $A=\Id$, $s\neq 0$ \cite{ABBK}, 
while ordinary Killing spinors arise of course from $s=0$ and $A=\Id$ 
\cite{Fr80,BFGK91}.
Our treatment intends to subsume all these notions into one and 
 shed light on the mutual relationships. 
\end{dfn}
\begin{exa}
In view of Lemma \ref{lem:classg2}, any cocalibrated $\G$-manifold (class 
$\WG_{13}$) is defined by a gKS. 
For example, the standard $\G$-structure of a $7$-dimensional $3$-Sasaki manifold 
is cocalibrated, and indeed the \textit{canonical spinor} is generalised Killing \cite{AF10}.
\end{exa}
Suppose that $\phib$, restricted to $M^6$, is a gKST. Then at any point of $M^6$
\bea[*]
\bar\nabla^s_X\phib&=&\bar\nabla_X\phib+s(X\hook \bar T)\phib
=\nabla_X\phib+s(X\hook\bar T)\phib-\tfrac{1}{2}V W(X)\phib\\
&=&\nabla^s_X\phib+s(X\hook(\bar T-T))\phib-\tfrac{1}{2}V W(X)\phib\\
&=&V (A-\tfrac{1}{2}W)(X)\phib  - s(X\hook L)\phib.
\eea[*]
Picking $A=\frac{1}{2}W$ annihilates the first term, so we are left with 
$\bar\nabla^s_X\phib=- s(X\hook L)\phib$.
Conversely, any $\bar\nabla^s$-parallel spinor on $\bar M^7$ satisfies
\bdm
0=\bar\nabla^s_X\phib\ =\ \nabla^s_X\phib+s(X\hook(\bar T-T))\,\phib
-\tfrac{1}{2}W(X)\,\phib.
\edm
To sum up,
\begin{thm}
 Let $(\bar M^7,\bar g,\phigk)$ be a $\G$-manifold with characteristic 
connection $\bar \nabla^c$ and torsion $\bar T$. Take 
a hypersurface $M^6\subset \bar M^7$ of type one or three such that 
$V\hook \bar T=0$. 
Then 
\begin{enumerate}
\item
$(M^6,g=\bar g_{|TM^6},\phigk)$ is an $\SU(3)$-manifold with characteristic connection  
$\nabla+\bar T+L$;
\item any solution $\phib$ on $\bar M^7$ to the gKST equation $
\nabla^s_X\phib=\frac{1}{2}W(X)\,\phib$ on $M^6$ must satisfy
\bdm
\bar\nabla^s_X\phib= - s(X\hook L)\phib;
\edm
\item Vice versa, if $\phib$ is $\bar\nabla^s$-parallel on $\bar M^7$,
 it solves
\bdm
\nabla^s_X\phib=-sX\hook(\bar T-T)\,\phib+\frac{1}{2}W(X)\,\phib.
\edm
\end{enumerate}
\end{thm}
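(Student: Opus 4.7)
The guiding observation is that nearly everything is already computed in the two pages preceding the theorem. My plan is simply to package the three conclusions in a clean order, leaning throughout on two ingredients: the spinorial Gauss formula $\bar\nabla_X\phib=\nabla_X\phib-\tfrac{1}{2}VW(X)\phib$ recorded at the start of Section \ref{sec:subman}, and the identity $(X\hook L)\phisk=-2W(X)\phisk$ supplied by the preceding lemma, combined with the convention $X\cdot\phib=VX\phib$ for the induced Clifford product on $M^6$.

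For part (1), I would start from $\bar\nabla^c\phigk=0$ and restrict to $X\in TM^6$. Since $V\hook\bar T=0$, the contraction $X\hook\bar T$ is already a $2$-form on $M^6$, so Clifford-acting with it on a spinor is unambiguous. The Gauss formula gives
\[
\nabla_X\phisk \;=\; \tfrac{1}{2}VW(X)\phisk-\tfrac{1}{4}(X\hook\bar T)\phisk.
\]
The preceding lemma, read through the $M^6$-convention $W(X)\cdot\phisk=VW(X)\phisk$, rewrites the first term as $-\tfrac{1}{4}(X\hook L)\phisk$. Setting $T:=\bar T_{|M^6}+L$ one obtains $\nabla_X\phisk+\tfrac{1}{4}(X\hook T)\phisk=0$, and Lemma \ref{lem.charconn-spinor} plus uniqueness of the characteristic connection (cited there) identifies $\nabla+T$ as the characteristic connection of $(M^6,g,\phisk)$.

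Parts (2) and (3) will be derived from a single master identity. Expanding $\bar\nabla^s_X\phib=\bar\nabla_X\phib+s(X\hook\bar T)\phib$ by Gauss and substituting $\nabla_X\phib=\nabla^s_X\phib-s(X\hook T)\phib$, together with $\bar T-T=-L$ on $M^6$ (which uses $V\hook\bar T=0$ crucially), yields
\[
\bar\nabla^s_X\phib \;=\; \nabla^s_X\phib-s(X\hook L)\phib-\tfrac{1}{2}VW(X)\phib.
\]
Imposing the gKST condition $\nabla^s_X\phib=\tfrac{1}{2}W(X)\phib$ and remembering that the right-hand side means $\tfrac{1}{2}VW(X)\phib$ in the ambient Clifford product cancels the last term and gives statement (2); imposing instead $\bar\nabla^s_X\phib=0$ and solving for $\nabla^s_X\phib$ gives statement (3) after rewriting $-L=\bar T-T$.

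The only genuinely nontrivial point is hidden inside the preceding lemma, namely that $L$ actually represents the Weingarten map on the spinor level via $(X\hook L)\phisk=-2W(X)\phisk$; this is why the hypothesis on the Weingarten type (I or III) is needed, since only in those classes is $L$ constructed from $W$ via $\Theta\circ\kappa$ acting as a scalar on each isotypical piece. The other point requiring vigilance is purely bookkeeping: keeping straight the two Clifford products $\cdot$ and the ambient one, so that the insertion of $V$ on the normal side matches up correctly; this is the spot where a sign or a stray $V$ can most easily spoil the computation.
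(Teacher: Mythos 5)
Your proposal is correct and follows essentially the same route as the paper: part (1) by combining the spinorial Gauss formula with the identity $(X\hook L)\phisk=-2W(X)\phisk$ and the uniqueness of the characteristic connection, and parts (2)--(3) by the single expansion $\bar\nabla^s_X\phib=\nabla^s_X\phib+s\,(X\hook(\bar T-T))\phib-\tfrac{1}{2}VW(X)\phib$ with $\bar T-T=-L$, then specialising. The only difference is cosmetic (you package (2) and (3) as one master identity, the paper writes the two directions separately), and your remarks on the Clifford-product conventions and on why types I/III are needed are accurate.
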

%
%
\begin{exa}
Given $(M^6,g)$ we build the twisted cone
\bdm
(\bar M^7:=M^6\times\R,\bar g:=a^2t^2g +dt^2)
\edm
for some $a>0$. From the submanifold $M^6\cong M^6\times\{\frac{1}{a}\}\subset 
\bar M^7$ we can only infer the Clifford multiplication of $\bar M^7$ at  
points of $M^6\times\{\frac{1}{a}\}$. 
Therefore we consider, as in Section \ref{sec:spincones}, the hypersurface 
$M^6_t:=(M^6,a^2t^2g)\cong M^6\times\{\frac{t}{a}\}\subset 
\bar M^7$. At any point in $M^6_t$ the spinor bundles of $M^6_t$ 
and $\bar M^7$ are the same and can be identified with the spinor bundle 
of $M^6$. 
%
Hence $X\,\phib=\frac{1}{at}\del_t X\phib$. 
Since the metric of $M^6_t$ is just a rescaling of that of $M^6$, the 
Levi-Civita connections $\nabla$ coincide. For the Riemannian 
connection $\bar \nabla$ on $\bar M^7$ we have
\bdm
\bar\nabla_X\phib=\nabla_X\phib+\frac{1}{2t}\del_t X\phib=\nabla_X\phib
+\frac{a}{2}X\,\phib, 
\edm
as $W(X)=-\frac{1}{t}X$. Therefore the submanifolds $M^6_t$ are of type 
I, and one can determine the possible structures using Theorems 
\ref{thm:tyofsu1}, \ref{thm:tyofsu2}. 
Any $2$-form $\sigma$ on $M^6$ is a $2$-form on $\bar M^7$ with 
$\del_t\hook\sigma=0$, and in addition 
\bdm
\sigma\cdot\phib=a^2t^2\sigma\phib
\edm
for any spinor $\phib$.\\

Let $\phisk$ be an $\SU(3)$-structure on $M^6$ and consider the $\G$-structure 
on $\bar M^7$ given by $\phigk$. Then 
$\del_t \hook \f = -a^2t^2\omega 
$.
If $M^6$ has characteristic connection $\nabla^c$ with torsion $T$, 
\bea[*]
0&=&\nabla^c_X\phisk=\nabla_X\phisk+\tfrac{1}{4}(X\hook T)\,\phisk  = 
\bar\nabla_X\phisk-\tfrac{a}{2}X\,\phisk+\tfrac{1}{4}(X\hook T)\,\phisk\\
&=&\bar\nabla_X\phisk-\tfrac{a}{4}(X\hook\psis)\,\phisk+\tfrac{1}{4}(X\hook T)\,\phisk 
= \bar\nabla_X\phisk+\tfrac{1}{4}(X\hook(T-a\psis))\,\phisk\\
&=&\bar\nabla_X\phisk+\tfrac{1}{4}(X\hook a^2t^2(T-a\psis))\phisk,
\eea[*]
showing that $\bar T=a^2t^2(T-a\psis)$ is the characteristic torsion of $\bar M^7$.

Given a $\bar\nabla^s$-parallel spinor $\phib$ 
\bea[*]
0&=&\bar\nabla_X\phib+s(X\hook \bar T)\phib 
=\nabla_X\phib+\tfrac{a}{2}X\,\phib+\tfrac{s}{a^2t^2}(X\hook \bar T)\,\phib\\
&=&\nabla_X\phib+\tfrac{a}{2}X\,\phib+s(X\hook (T-a\psis))\,\phib 
=\nabla^s_X\phib+\tfrac{a}{2}X\,\phib-as(X\hook \psis)\,\phib,
\eea[*]
from which
\bdm
\nabla^s_X\phib-as(X\hook\psis)\,\phib=-\tfrac{a}{2}X\,\phib.
\edm
Consider the differential form on $\bar M^7$ 
\bdm
\bar \psis(X,Y,Z):=a^3t^3\psi^-(X,Y,Z) \mbox{ for } X,Y,Z\in TM^6 \mbox{ and } \del_t\lrcorner\bar\psis=0
\edm
For a Killing spinor solving $\nabla^s_X\phib=-\frac{a}{2}X\,\phib$ we then have
\bea[*]
0&=&\nabla^s_X\phib+\tfrac{a}{2}X\,\phib = 
\nabla_X\phib+\tfrac{a}{2}X\,\phib+s(X\hook T)\,\phib\\
&=&\bar\nabla_X\phib+sa^2t^2(X\hook T)\phib 
= \bar\nabla_X\phib+sa^3t^2(X\hook \psis)\phib+s(X\hook \bar T)\phib.
\eea[*]
Consequently
\bdm
0=\bar\nabla^s_X\phib+\frac{s}{t}(X\hook \bar\psis)\phib.
\edm
\end{exa}
\begin{exa}
Let $(M^7,g,\xi,\eta,\esomorphism)$ be an Einstein-Sasaki manifold with 
Killing vector $\xi$, Killing $1$-form $\eta$ and almost complex structure $\esomorphism$ 
 on $\xi^\perp$. The Tanno deformation 
($t>0$)
\bdm
g_t\ :=\ tg+(t^2-t)\eta\otimes\eta, \, \, \, \, \, \xi_t\ :=\ 
\frac{1}{t}\xi,  \, \, \, \, \,  \eta_t:=t\eta
\edm
has the property that
$(M^7,g_t,\xi_t,\eta_t,\esomorphism)$ remains Sasaki for all values of $t$. Call 
$\nabla^{g_t}$ the Levi-Civita connection of  
$(M,g_t,\xi_t,\eta_t,\esomorphism)$ and $T^{g_t}$ the characteristic torsion 
of the almost contact structure (a characteristic connection exists 
since the manifold is Sasaki). Becker-Bender proved \cite[Thm. 2.22]{BB12} 
the existence of a Killing spinor with torsion for
\bdm
\nabla^{g_t}_X+(\tfrac{1}{2t}-\tfrac{1}{2})(X\lrcorner T^{g_t}).
\edm
Quasi Killing spinors \cite{FK00} are special instances of Definition \ref{def:gKST}
 and produce gKST on the deformed Sasaki manifold
$(M^7,g_t,\xi_t,\eta_t,\esomorphism)$. 
 As proved  in \cite{BB12}, in this 
example generalised Killing spinors with torsion and Killing spinors with torsion are the same. Since the $A$ of a gKS is symmetric, the $\G$-structure given by this spinor is cocalibrated ($\WG_{13}$).
\end{exa}
\begin{exa}
In \cite{ABBK} it was proved that on a nearly K\"ahler manifold 
the sets of $\nabla^c$-parallel spinors, Riemannian Killing spinors, and
Killing spinors with torsion coincide.  
\end{exa}
To conclude, the different existing notions of (generalised) Killing spinors
(with torsion) are far from being disjoint and are best described, at least in
dimensions $6$ and $7$, using the characterising spinor
of the underlying $G$-structure as presented in this article.

\begin{NB}
At last note that the sign of the Killing constant may be reversed by choosing 
$\js(\phib)$ instead of $\phib$. 
\end{NB}
\begin{acknowledgements}
SGC was supported by an `{\sc in}d{\sc am-cofund} Fellowship in Mathematics 
and/or Applications for experienced researchers cofunded by Marie Curie 
actions', and 
thanks the FB12 at Philipps-Universit\"at Marburg for the tremendous stay.
He is indebted to Simon Salamon for sharing his insight 
at an early stage, and acknowledges his unwavering influence.

The authors thank the anonymous reviewer for reading the manuscript 
very carefully and checking almost all computations---his or her comments 
were highly appreciated. 
\end{acknowledgements}

   
\end{document}